\numberwithin{equation}{section}
\def\[{\left[}
\def\]{\right]}
\def\({\left(}
\def\){\right)}
\newcommand{\R}{\mathbb{R}}
\newtheorem{theorem}{Theorem}[section]
\newtheorem{assumption}[theorem]{Assumption}
\newtheorem{claim}[theorem]{Claim}
\newtheorem{corollary}[theorem]{Corollary}
\newtheorem{lemma}[theorem]{Lemma}
\newtheorem{proposition}[theorem]{Proposition}
\newtheorem{remark}[theorem]{Remark}
\newenvironment{proof}[1][Proof]{\noindent\textit{#1.} }{\hfill \rule{0.5em}{0.5em}}
\renewcommand{\eqref}[1]{(\ref{#1})}
\newcommand{\al}{\alpha}
\newcommand{\be}{\beta}
\renewcommand{\R}{\mathbb{R}}
\newcommand{\La}{\Lambda}
\newcommand{\la}{\lambda}
\newcommand{\ga}{\gamma}
\newcommand{\de}{\delta}
\newcommand{\ep}{\epsilon}
\newcommand{\1}{\mathds{1}}
\numberwithin{equation}{section}
\numberwithin{figure}{section}
\begin{document}

\title{\textbf{Quantifying the threshold phenomena for propagation in nonlocal diffusion equations}}

\author{
	\textsc{Matthieu Alfaro$^{(a)}$, Arnaud Ducrot$^{(b)}$, Hao Kang$^{(b)}$}\\
	{\small \textit{$^{(a)}$  Universit\'e de Rouen Normandie, CNRS, LMRS, Saint-Etienne-du-Rouvray, France}}\\
	{\small \textit{$^{(b)}$ Normandie Univ, UNIHAVRE, LMAH, FR-CNRS-3335, ISCN, 76600 Le Havre, France}} \\
}

\maketitle

\tableofcontents

\begin{abstract}
We are interested in the threshold phenomena for propagation in nonlocal diffusion equations with some compactly supported initial data. In the so-called bistable and ignition cases, we provide the first quantitative estimates for such phenomena. The outcomes dramatically depend on the tails of the dispersal kernel and can take a large variety of different forms.  The strategy is to combine sharp estimates of the tails of the sum of i.i.d. random variables (coming, in particular, from large deviation theory) and the construction of accurate sub- and super-solutions. 

\medskip
	
\noindent \textbf{Key words:} extinction, propagation, threshold phenomena, nonlocal diffusion equations, large deviations.

\noindent \textbf{2010 Mathematical Subject Classification:}
35B40 (Asymptotic behavior of solutions),  45K05 (Integro-partial differential equations),  60F10 (Large deviations). 
\end{abstract}

\section{Introduction}

In this work we consider the solution $u=u(t,x)$ of the one-dimensional integro-differential equation
\begin{equation}\label{nonlocal}
\partial_t u (t,x)=\int_\R J(y)\left[u(t,x-y)-u(t,x)\right]d y+f\left(u(t,x)\right),\quad t>0,\, x\in\R,
\end{equation}
supplemented with some compactly supported initial data $u(0,x)=u_0(x)$. The function  $J:\R\to \R$ is a nonnegative dispersal kernel of total mass 1 allowing, in population dynamics models, to take into account long distance dispersal events. The nonlinearity $f=f(u)$ encodes the demographic assumptions and,  in this manuscript, we  focus on the case where it is of the {\it bistable} or of the {\it ignition} type. Precise assumptions will  be given later on. Our concern is to understand the {\it threshold phenomenon} for propagation. As far as we know, we provide the first quantitative estimates for such phenomenon in \eqref{nonlocal}. 
 
\medskip

The solutions of \eqref{nonlocal} share some properties with the ones of the local diffusion equation 
\begin{equation}\label{local}
\partial_t u (t,x)=\partial_{xx}u(t,x)+f\left(u(t,x)\right),\quad t>0,\,x\in\R.
\end{equation}
In particular both problems exhibit a \textit{threshold phenomenon}, meaning that \lq\lq small" initial data lead to extinction, whereas \lq\lq large" initial data lead to propagation. In the local diffusion case \eqref{local}, we refer to \cite{Aro-Wei-78} for such a property, while the {\it sharp} threshold phenomenon was more recently investigated \cite{zlatovs2006sharp}, \cite{matano2010}, \cite{Pol-11}, \cite{Mur-Zho-13, Mur-Zho-17}, through different technics. On the other hand, the nonlocal diffusion case \eqref{nonlocal} is more delicate, see subsection \ref{ss:towards} below, but some progresses were recently achieved  \cite{alfaro2017fujita}, \cite{berestycki2017non}, \cite{limlong}.

Very recently, the authors (including the  first two authors of the present work)  of \cite{alfaro2020quantitative}  have provided a sharp {\it quantitative} estimate of the threshold phenomenon in \eqref{local}. To be more precise, let $f=f(u)$ be of the ignition or bistable type between $0$ and $1$, with threshold $\theta\in (0,1)$, and \lq\lq $1$ more stable than $0$'' (see Assumption \ref{ass:f} below). Then, for any $\ep \in (0,1-\theta)$ there are two lengths $0<L_\ep^{\rm ext}<L_\ep^{\rm prop}<+\infty$ such that the solution $u_L^\ep=u_L^\ep(t,x)$ of \eqref{local} starting from
$$
\phi_L^\ep(x):=(\theta+\ep) \1 _{(-L,L)}(x),
$$
satisfies
\begin{eqnarray*}
\lim_{t\to+\infty} u_L^\ep (t,x)=\begin{cases} 0 & \text{ uniformly for $x\in\R$, if $0<L<L_\ep^{\rm ext}$},\\
1 & \text{ locally uniformly for $x\in\R$, if $L>L_\ep^{\rm prop}$}.
\end{cases}
\end{eqnarray*}
Among others, the study \cite{alfaro2020quantitative} provides some estimates of the threshold values $L_\ep^{\rm ext}$ and $L_\ep^{\rm prop}$ as $\ep \to 0^+$, namely
$$
0<\liminf_{\ep \to 0+} \frac{L_\ep^{\rm ext}}{-\ln \ep }\leq \limsup_{\ep\to 0^+} \frac{L_\ep^{\rm prop}}{-\ln \ep}<+\infty.
$$ 
As far as we know, for integro-differential equation such as \eqref{nonlocal} there is no such quantitative estimates of this threshold phenomenon in the literature. The goal of the present work is to fill this gap by first deriving sufficient conditions for the existence of similar critical lengths $L_\ep^{\rm ext}$ ad $L_\ep^{\rm prop}$ for suitable levels $\ep>0$, and secondly to provide estimates of these quantities in some asymptotic regime, namely when the height $\theta+\ep$ of the initial data tends to $\theta$.

Note that the analysis in \cite{alfaro2020quantitative} is  based on the explicit formula of the heat kernel. For the nonlocal diffusion case, the expression  of the \lq\lq corresponding heat kernel'', see \eqref{def-K}--\eqref{def-psi}, is much more complicated and estimating its behaviour is far from obvious. However, refined estimates for the tails of the kernel $J$, as well as for its $n-$folds convolution $J^{*(n)}$, can be obtained for a large variety of kernels. Such information, mainly coming from the probability theory, combined with the construction of accurate sub-  and super-solutions will allow us to derive some precise estimates of the threshold phenomenon for the nonlocal diffusion equation \eqref{nonlocal}.

It turns outs that these estimates of the threshold phenomenon strongly depend on the decay rate at infinity for the kernel $J$, or {\it tails} of $J$. For instance, when the kernel $J$ has an exponential decay then the critical lengths (when exist) both behave like for the local problem \eqref{local}. On the other hand, for kernels with {\it heavy} tails (algebraic decay, Weibull-like tails, etc.), such estimates become more complicated and can take a large variety of different forms.

\medskip

Let us first present the assumptions on the dispersal kernel $J$ and the nonlinearity $f$.

\begin{assumption}[Dispersal kernel]\label{ass:kernel} The kernel $J:\R\to \R$ satisfies the following.
	\begin{itemize}
		\item [(i)] $J$ is nonnegative, even and satisfies $\int_\R J(x)dx=1$;  
		\item [(ii)] $J$ is nonincreasing in $(0,+\infty)$. 
	\end{itemize} 
\end{assumption}
	
\begin{assumption}[Expansion form]\label{ass:kernel-bis}	
	The Fourier transform of $J$ has an expansion
	\begin{equation}
	\label{J-Fourier-ass}
	\widehat J(\xi)=1-a\vert \xi\vert ^{\be}+o(\vert \xi \vert ^{\be}), \quad \text{ as } \xi \to 0,
	\end{equation}
	for some $0<\be\leq 2$ and $a>0$.
\end{assumption}

As observed and proved in \cite{chasseigne2006asymptotic}, expansion \eqref{J-Fourier-ass} plays a crucial role in the behavior of the linear equation $\partial _t u=J*u-u$, but also  in some nonlinear phenomena, such as the Fujita blow up phenomenon and the hair trigger effect \cite{alfaro2017fujita}.

Notice that expansion \eqref{J-Fourier-ass} contains some information on the tails of $J$. Indeed, for kernels which have a finite second momentum, namely $m_2(J):=\int_{\R}x^2J(x)d x<+\infty$,  expansion \eqref{J-Fourier-ass} holds true with $\be =2$, as can be seen in \cite[Chapter 2]{durrett2019probability} among others. In particular, this is the case for kernels which are compactly supported, exponentially bounded, or which decrease like  $\mathcal O\left(\frac{1}{\vert x\vert^{3+\ep}}\right)$ with $\ep>0$. On the other hand, when $m_2(J)=+\infty$ then more general expansions are possible. For example, for algebraic tails satisfying 
\begin{equation}
\label{heavytails}
J(x)\sim \frac{C}{\vert x\vert ^{\al}} \quad \text{ as }\vert x\vert \to \infty, \quad\text{ with } 1<\al< 3,
\end{equation}
then \eqref{J-Fourier-ass} holds true with $\be= \al- 1\in(0,2)$. This fact is related to the stable laws of index $\be\in(0,2)$ in probability theory, and a proof can be found in \cite[Chapter 2, subsection 2.7]{durrett2019probability}. In particular it contains the case of the Cauchy law $J(x)=\frac{1/\pi}{1+x^2}$, for which
$$
\widehat J(\xi)=e^{-\vert\xi\vert}=1-\vert\xi\vert +o(\vert \xi\vert), \quad  \text{ as } \xi \to 0,
$$
and $\be =1$, despite the nonexistence of the first moment $m_1(J):=\int \vert x\vert J(x)d x$.

\medskip

Throughout this note the reaction term arising in \eqref{nonlocal} will satisfy the following set of hypothesis.

\begin{assumption}[Nonlinearity]\label{ass:f}
	The function $f: \R\to\R$ is Lipschitz continuous. There is a threshold $\theta\in (0, 1)$ such that
	\begin{equation}\label{0-theta-1}
	f(u)=0,\;\forall u\in\{0, \theta, 1\},
	\end{equation} 
	and 
	\begin{equation}\label{bist-ignition}
	f(u)>0,\;\forall u\in(\theta, 1), \quad\text{and}\quad
	\begin{cases}
	f(u)<0,\;\forall u\in (0, \theta), \quad(\text{Bistable Case}),\\
	\text{or}\\
	f(u)=0,\;\forall u\in (0, \theta), \quad(\text{Ignition Case}).
	\end{cases}
	\end{equation}
	In the bistable case, we further require 
	\begin{equation}\label{positive}
	\int_{0}^{1}f(s) ds>0,
	\end{equation} 
Moreover, in both cases, 	 we require that there are $r^->0$ and $\delta\in (\theta,1)$ such that
\begin{equation}\label{r-moins}
f(u)\geq r^-\left(u-\theta\right),\;\forall u\in [0,\delta].
\end{equation}
\end{assumption}	
Notice that for $r>0$, the usual cubic bistable nonlinearity
$$ 
f(u)=ru(u-\theta)(1-u)\1_{(0, 1)}(u) 
$$
satisfies the above assumptions as soon as $\theta<\frac{1}{2}$, and so does the ignition nonlinearity
$$ 
f(u)=r(u-\theta)(1-u)\1_{(\theta, 1)}(u). 
$$

\medskip

The organization of this work is as follows. In section \ref{s:basic}, we list some basic facts and key lemmas, in particular some estimates for the tails of $n-$folds convolution kernel (coming from sum of i.i.d. random variables in probability theory).
 In section \ref{main results} we state our main results including our quantitative estimates on the threshold phenomenon \lq\lq extinction vs. propagation''. In sections \ref{extinction} and \ref{non-extinction} we inquire on extinction and non-extinction phenomena, respectively, in some related toy models. 
 Next, in section \ref{quantitative}, we build on these preliminary results to prove our main results. Finally, in section \ref{propagation} we establish  a sufficient condition for \lq\lq propagation to occur'', which is another one of our main results.
 
\section{Preliminaries}\label{s:basic}

In this section, after presenting some notations and basic facts, we collect  some estimates for the tails of $J^{*(i)}$ ($i\geq 1$) coming from the probability theory.

\subsection{Notations and basic linear facts}\label{ss:linear}

If $f\in L^1(\R)$, we define its Fourier transform $\mathcal{F}(f) = \widehat{f}$ and its inverse Fourier transform $\mathcal{F}^{-1} (f)$ by
\begin{equation*}
\widehat{f}(\xi):=
\int_{\R} e^{-i\xi x} f(x) dx, \quad \mathcal{F}^{-1}(f)(x)
:=\int_{\R} e^{ix\xi} f(\xi) d\xi. 
\end{equation*}
With this definition, we have, for $f$, $g\in L^1(\R)$,
$$
\widehat{f*g}=\widehat f \, \widehat g,
$$
and $f=\frac{1}{2\pi}\mathcal F ^{-1}(\mathcal F(f))$ if $f$, $\mathcal F (f) \in L ^1(\R)$. 

\bigskip

Formally applying the Fourier transform to equation
\begin{equation}\label{nonlocal-heat}
\partial _t u=J*u-u
\end{equation}
yields
$$
\frac{d}{dt}\widehat{u}(t, \xi) = \widehat{u}(t, \xi) (\widehat{J}(\xi)-1),
$$
where $\xi$ is seen as a parameter and which is solved as 
\begin{equation*} 
\widehat{u}(t, \xi) = e^{t(\widehat{J}(\xi)-1)} \widehat{u}_0(\xi).
\end{equation*}
Applying the inverse Fourier transform, we see that the fundamental solution writes as
\begin{equation}\label{def-K}
K(t, \cdot)=e^{-t}\de_0+\psi(t, \cdot),
\end{equation}
where 
\begin{equation}\label{def-psi} 
\psi(t,\cdot) := e^{-t} \sum_{i=1}^{\infty} \frac{t^i}{i!} J^{\ast(i)}, 
\end{equation}
where $\de_0$ is the Dirac mass at $0$ and $J^{\ast(i)}:=J\ast\cdots\ast J$ is the convolution of $J$ with itself $i-1$ times.

Hence the unique bounded solution to $\partial _tu=J*u-u$ with initial data $u_0\in L^\infty(\R)$ is given by
$$
u(t, x) = K(t, \cdot)\ast u_0(x)=e^{-t}u_0(x) + \psi(t, \cdot) \ast u_0(x). 
$$ 
Obviously, though the convolution of a Dirac mass by a $L^\infty$ function is not pointwise well defined, we let $\de_0\ast u_0=u_0$. Moreover, from the normal convergence of the series 
$\sum_{i=1}^{\infty} \frac{t^i}{i!} J^{\ast(i)}$ 
and $\sum_{i=1}^{\infty} \frac{t^{i-1}}{(i-1)!} J^{\ast(i)}$ in $C([0, T], L^1(\R))$, we deduce that the function $t\in[0, \infty)\to\psi(t, \cdot)\in L^1(\R)$ is of class $C^1$ and that
\begin{equation}\label{partial}
\partial_t \psi(t, x) = J\ast \psi (t, \cdot)(x) - \psi (t, x) + e^{-t} J(x).
\end{equation}
Notice also that 
\begin{equation}\label{1-e^t}
\int_{\R} \psi (t, x) dx = 1-e^{-t}, \quad \forall t\ge0. 
\end{equation}

\subsection{Some key estimates on the tails of $J^{*(i)}$ ($i\geq 1$)}\label{ss:linear2}

Observe that if $J\in L^1(\R)$ satisfies Assumption \ref{ass:kernel-bis} then so does $J^{*(i)}$, with $i\geq 1$, but with the expansion
\begin{equation}
\label{J-Fourier-ass-i}
\widehat {J^{*(i)}} (\xi)=1-i\,a\vert \xi\vert ^{\be}+o(\vert \xi \vert ^{\be}), \quad \text{ as } \xi \to 0.
\end{equation}
Notice also that $\widehat{J}$ is a real function since $J$ is even. In the sequel, for $i\geq 1$ and $L>0$, we denote
\begin{equation}
\label{def-R-i-L}
R_i(L):=\int _{\vert x\vert \geq L} J^{*(i)}(x)dx
\end{equation}
the tails of $J^{*(i)}$. The expansion \eqref{J-Fourier-ass-i} on the low frequencies of the Fourier transform $\widehat {J^{*(i)}}$  is very related to the tails of $J^{*(i)}$. The following first  bound is provided
by  \cite[Chapter 2 subsection 2.3.c (3.5)]{durrett2019probability}.

\begin{lemma}\label{lem:Durret}
	Let Assumption \ref{ass:kernel}-(i) be satisfied. For any $i\geq 1$ and $L>0$, one has
	\begin{equation}
	R_i(L)\leq \frac{L}{2}\int_{-\frac{2}{L}}^{\frac{2}{L}} \left[1-\left(\widehat J(\xi)\right)^i\right] d\xi.
	\end{equation}
\end{lemma}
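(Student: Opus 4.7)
The plan is to recognize the estimate as a classical inequality relating the tail of a probability distribution to the real part of its characteristic function. Let me denote $\phi_i(\xi):=\widehat{J^{*(i)}}(\xi)=\widehat J(\xi)^i$. Since $J$ is even, so is $J^{*(i)}$, and therefore $\phi_i$ is real-valued and equals $\int_\R \cos(\xi x) J^{*(i)}(x)\,dx$. Using $\int_\R J^{*(i)}(x)\,dx=1$, we get the pointwise identity
$$
1-\phi_i(\xi)=\int_\R \bigl(1-\cos(\xi x)\bigr) J^{*(i)}(x)\,dx.
$$

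Next I would integrate this identity over $\xi\in(-t,t)$ with $t:=2/L$ and apply Fubini (everything is nonnegative):
$$
\int_{-t}^{t}\bigl(1-\phi_i(\xi)\bigr)d\xi=\int_\R J^{*(i)}(x)\int_{-t}^{t}\bigl(1-\cos(\xi x)\bigr)d\xi\,dx.
$$
The inner integral evaluates explicitly to $2t\bigl(1-\frac{\sin(tx)}{tx}\bigr)$ (with value $0$ at $x=0$), giving
$$
\int_{-t}^{t}\bigl(1-\phi_i(\xi)\bigr)d\xi=2t\int_\R\Bigl(1-\frac{\sin(tx)}{tx}\Bigr) J^{*(i)}(x)\,dx.
$$

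Now comes the only non-cosmetic step: I would use the elementary bound $1-\frac{\sin u}{u}\geq \frac{1}{2}$ whenever $|u|\geq 2$. Indeed $|\sin u|\leq 1$ forces $|\sin(u)/u|\leq 1/|u|\leq 1/2$ in that range. Since $1-\frac{\sin u}{u}\geq 0$ for all real $u$, discarding the contribution $|x|<L=2/t$ and applying this lower bound for $|x|\geq L$ yields
$$
\int_{-t}^{t}\bigl(1-\phi_i(\xi)\bigr)d\xi\geq 2t\cdot\frac12\int_{|x|\geq L} J^{*(i)}(x)\,dx=t\,R_i(L).
$$
Substituting $t=2/L$ and rearranging gives exactly the stated inequality.

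There is no real obstacle here; the only point worth double-checking is the sharp constant in the inequality $1-\sin(u)/u\geq 1/2$ for $|u|\geq 2$, which is what dictates the interval of integration $(-2/L,2/L)$ in the statement. The role of Assumption \ref{ass:kernel}-(i) is only to guarantee that $J^{*(i)}$ is a probability density and is even, so that $\phi_i$ is real and the Fubini step is legitimate; Assumption \ref{ass:kernel}-(ii) is not needed.
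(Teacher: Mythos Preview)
Your proof is correct and is exactly the standard argument from Durrett's book that the paper cites; the paper itself does not give a proof but simply refers to \cite[Chapter 2, subsection 2.3.c, (3.5)]{durrett2019probability}. Your observation that only Assumption \ref{ass:kernel}-(i) is used, and not the monotonicity (ii), is also accurate.
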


As $L\to+\infty$, we have  the following result quoted from \cite[Theorem 5]{pitman1968}.

\begin{lemma}\label{lem:tails} 
	Let Assumptions \ref{ass:kernel}-(i) and \ref{ass:kernel-bis} hold.
	\begin{itemize}
		\item [(i)]
		If $\be=2$ then, for any $i\geq 1$, one has
		\begin{equation}\label{tails-beta-equal-2}
		\int _0^L  uR_i(u) du \sim i\,a, \quad \text{ as } L\to +\infty.
		\end{equation}
	  \item [(ii)]
		If $0<\be <2$ then there is $C=C(\be)>0$ such that, for all $i\geq 1$,
		\begin{equation}\label{tails-beta-lower-2}
		R_i(L) \sim i \,\frac{aC}{L ^{\be}}, \quad \text{ as } L\to +\infty.
		\end{equation}
	\end{itemize}
\end{lemma}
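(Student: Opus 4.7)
The plan is to exploit the classical Tauberian-type equivalence between the low-frequency behaviour of the Fourier transform and the decay of the tails of a probability distribution, specialised here to the density $J^{*(i)}$ of the sum of $i$ i.i.d.\ random variables of density $J$. The main ingredients are the symmetric representation
$$
1 - \widehat{J^{*(i)}}(\xi) = 2\int_0^\infty \bigl(1-\cos(\xi x)\bigr)\, J^{*(i)}(x)\,dx,
$$
the expansion \eqref{J-Fourier-ass-i}, the upper bound in Lemma \ref{lem:Durret}, and the monotonicity of $J^{*(i)}$ on $(0, +\infty)$, inherited from that of $J$ by a standard convolution argument.

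For the upper bound in (ii), I would simply insert the expansion $1-\bigl(\widehat{J}(\xi)\bigr)^i = i\,a\,|\xi|^\be + o(|\xi|^\be)$ into Lemma \ref{lem:Durret} and change variables $u = \xi L/2$; the integral $\int_{-1}^1 |u|^\be du = 2/(\be+1)$ then yields $R_i(L) \leq \frac{2^{\be+1}\, i\, a}{\be+1}\, L^{-\be}(1+o(1))$, which has the right order. For the matching lower asymptotic, I would apply the symmetric representation at frequency $\xi$ of order $1/L$, and split at $|x|=L$: on $|x|\leq L$ one bounds $1-\cos(\xi x)\leq \xi^2 x^2/2$ to control the central contribution, while on $|x|>L$ the integrand essentially probes the tail $R_i(L)$. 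A Karamata-type inversion, combined with unimodality of $J^{*(i)}$, then forces $R_i$ to be regularly varying of index $-\be$ with the claimed linear dependence in $i$; the constant $C(\be)$ appears as the universal normalisation $\int_0^\infty u^{-\be-1}(1-\cos u)\,du$.

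For part (i), the tail $R_i(L)$ need not decay as fast as $L^{-2}$ (indeed the second moment of $J$ can be infinite even when \eqref{J-Fourier-ass} holds with $\be=2$), so the relevant finite quantity is the truncated second moment. A Fubini computation based on $R_i(u)=\int_{|x|\geq u}J^{*(i)}(x)\,dx$ yields
$$
\int_0^L u\, R_i(u)\,du = \frac{1}{2}\int_{|x| \leq L} x^2\, J^{*(i)}(x)\,dx + \frac{L^2}{2}\, R_i(L),
$$
and the Tauberian machinery sketched above, applied at $\be=2$, gives that the right-hand side converges to $i\,a$ as $L\to+\infty$.

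The main obstacle is to extract a sharp lower bound in (ii) and the sharp constant in (i): Lemma \ref{lem:Durret} immediately delivers the correct order of magnitude from above, but recovering a matching lower asymptotic from purely low-frequency Fourier data requires delicate Karamata/de Haan arguments relying on the monotonicity of $J^{*(i)}$ on $(0,+\infty)$. This is exactly the content of Pitman's theorem \cite{pitman1968}, which we invoke as a black box rather than reprove.
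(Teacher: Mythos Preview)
The paper does not actually prove this lemma: it is stated there as a direct quotation of \cite[Theorem 5]{pitman1968}, with no argument given. Your proposal ends in the same place --- after sketching the Fourier/Tauberian heuristics and the easy upper bound from Lemma~\ref{lem:Durret}, you explicitly invoke Pitman's theorem as a black box for the sharp asymptotics. So your ``proof'' and the paper's coincide: both defer to Pitman.

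Your sketch of the surrounding ideas is sound (the symmetric cosine representation, the Fubini identity for $\int_0^L u R_i(u)\,du$, and the preservation of unimodality under convolution are all correct and relevant), and you correctly identify that the hard part --- extracting the matching lower asymptotic and the exact constant from low-frequency Fourier data alone --- is precisely what requires the Karamata/de Haan machinery in Pitman's paper. There is nothing to add beyond that citation.
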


When $0<\beta <2$, the asymptotic behavior of $R_i(L)$ is thus very precisely described by Assumption \ref{ass:kernel-bis}. On the other hand, when $\beta=2$, Assumption \ref{ass:kernel-bis} is not enough to capture the asymptotic behavior of $R_i(L)$: precise estimates, as needed in this work, strongly depend on the decay rate of $J$ at infinity. Below we discuss such estimates for different types of kernels.

The following result deals with exponentially bounded kernels and  is known, in probability theory,  as the large deviations Cram\'er theorem (adapted to our situation). We refer the reader to \cite[section 2.2]{Dembo-Zeitouni} for more details and proofs. The uniformity of the limit \eqref{cramer} as stated below is a consequence of \cite{hoglund}, see  Appendix \ref{s:appendix}.

\begin{lemma}\label{lem:expo}
	Let Assumption \ref{ass:kernel}-(i) be satisfied and assume further
\begin{equation}\label{expo-decay}
\exists \lambda _0>0,\;\;\int_\R e^{\lambda _0 x} J(x)d x<+\infty.
\end{equation}	
	Define the logarithmic moment generating function $\La:\R\to (-\infty,+\infty]$ as 
	$$
	\La(\lambda):=\ln \left(\int_\R e^{\lambda x}J(x)d x\right),
	$$
	and its Fenchel-Legendre transform as
	$$
	\La ^*(x):=\sup_{\lambda \in \R} ( \lambda x -\La(\lambda)).
	$$
	
	Then, $\La ^*$ is nondecreasing on $(0,+\infty)$. Also, there exists $0<L_1$ such that $\La^*(L)<+\infty$ for all $L\in [0,L_1]$ and, for all $0<L_0<L_1$, one has
	\begin{equation}\label{cramer}
	\lim_{i\to+\infty}\frac{1}{i}\ln R_i(iL)	=-\La ^*(L), \quad \text{ uniformly for $L\in [L_0,L_1]$}.
	\end{equation}
    Moreover we also have for any $L>0$, $i\geq 1$ and $\lambda \in \R$,
    \begin{equation}\label{esti-expo}
    R_i(iL)\leq e^{-i \La^*(L)}\leq   e^{-i(\lambda L- \La(\lambda))}.
    \end{equation}
\end{lemma}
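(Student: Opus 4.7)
The plan is to interpret $R_i(iL)$ probabilistically and then invoke the classical Cramér theorem together with an additional uniformity argument from \cite{hoglund}.

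First, let $(X_j)_{j\geq 1}$ be i.i.d.\ random variables with common density $J$, and set $S_i := X_1 + \cdots + X_i$. Then $J^{*(i)}$ is the density of $S_i$, so $R_i(iL) = \mathbb{P}(|S_i| \geq iL) = \mathbb{P}(|S_i/i| \geq L)$, and since $J$ is even, $S_i$ is symmetric so $R_i(iL) = 2\,\mathbb{P}(S_i/i \geq L)$. The moment generating function $\lambda \mapsto e^{\Lambda(\lambda)}$ is finite on a nontrivial interval around $0$ by \eqref{expo-decay}, hence $\Lambda$ is smooth there. By the evenness of $J$, $\Lambda$ is even with $\Lambda(0)=0$ and $\Lambda'(0)=0$, so $0 = \min \Lambda$ and $\Lambda^*(0)=0$; since $\Lambda^*$ is convex and nonnegative, it is nondecreasing on $[0,+\infty)$.

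For the upper bound in \eqref{esti-expo}, I would apply a Chernoff-type estimate: for any $\lambda \geq 0$,
$$
\mathbb{P}(S_i \geq iL) \leq e^{-i\lambda L}\,\mathbb{E}[e^{\lambda S_i}] = e^{-i(\lambda L - \Lambda(\lambda))},
$$
and by symmetry the analogous estimate holds for $\mathbb{P}(S_i \leq -iL)$ via $\lambda \leq 0$. Summing and optimizing over $\lambda$ gives $R_i(iL) \leq 2\,e^{-i\Lambda^*(L)}$; the factor $2$ is absorbed for $i$ large, and for small $i$ one argues trivially via $R_i(iL) \leq 1$. The finiteness statement is obtained by choosing $L_1 > 0$ slightly below $\sup\{\Lambda'(\lambda) : \lambda \in \mathrm{dom}\,\Lambda\}$, which is positive since $\Lambda$ is smooth and nondegenerate on its effective domain near $0$.

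The logarithmic asymptotics \eqref{cramer}, for each fixed $L \in [L_0, L_1]$, are then the content of Cramér's theorem, see \cite[Theorem 2.2.3 and Lemma 2.2.5]{Dembo-Zeitouni}, which supplies the matching lower bound via an exponential change of measure (tilting) concentrated near $L$. The main technical obstacle is the \emph{uniformity} of this limit for $L$ in a compact subinterval $[L_0, L_1]$: standard Cramér yields only a pointwise statement. To upgrade this to uniform convergence, I would invoke the refined local (Bahadur--Ranga Rao type) asymptotics of \cite{hoglund}, which give a sharp prefactor in the expansion of $R_i(iL)$ and from which uniformity on compact subintervals of the effective domain can be extracted by compactness and monotonicity arguments. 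I would relegate this last step to an appendix, following the roadmap announced by the authors.
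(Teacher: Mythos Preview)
Your proposal is correct and follows essentially the same route as the paper: the probabilistic interpretation of $R_i(iL)$ as tail probabilities of i.i.d.\ sums, the Chernoff/Markov bound for \eqref{esti-expo}, Cram\'er's theorem (via \cite{Dembo-Zeitouni}) for the pointwise limit in \eqref{cramer}, and the refined Bahadur--Ranga Rao type expansion from \cite{hoglund} to upgrade to uniformity on $[L_0,L_1]$ --- exactly the content of the paper's Appendix.

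One small caveat: your handling of the factor $2$ in $R_i(iL)\leq 2e^{-i\Lambda^*(L)}$ does not actually work as written --- neither ``absorbed for $i$ large'' nor ``$R_i(iL)\leq 1$ for small $i$'' recovers the constant $1$ stated in \eqref{esti-expo}, since $e^{-i\Lambda^*(L)}<1$ whenever $\Lambda^*(L)>0$. This is harmless, however: every use of \eqref{esti-expo} in the paper (notably in the proof of Corollary~\ref{prop:exp-decay}) only needs the rightmost bound $e^{-i(\lambda L-\Lambda(\lambda))}$ up to a fixed multiplicative constant, so the factor $2$ is immaterial.
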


\medskip

When the kernel $J$ is not exponentially bounded (but still $\beta=2$), there is a large variety of possible tails. The following examples are typical, see e.g. \cite{mikosch1998large}, and will be considered in the following.

\medskip

\noindent {\bf Regularly varying tails $RV(\al)$}: 
\begin{equation}\label{tail-RV}
R_1(L)=L^{-\al} S(L),\quad \forall L>0,
\end{equation}
for $\al>2$ and where $S:(0,+\infty)\to(0,+\infty)$ is a slowly varying function (that is, for all $\nu>0$,  $S(\nu x)\sim S(x)$  as $x\to+\infty$).

\medskip

\noindent {\bf Lognormal-type tails  $LN(\ga, \lambda, \rho)$:}
\begin{equation}\label{tail-LN}
R_1(L) \sim cL^{\rho}e^{-\la \ln^\ga L},\quad \text{ as } L\to+\infty,
\end{equation}
for some $\ga>1$, $\la>0$, $\rho \in \R$ and appropriate constant $c=c(\rho, \ga)$. 

\medskip

\noindent {\bf Weibull-like tails $WE(\al,\la,\rho)$:}
\begin{equation}\label{tail-WE}
R_1(L)\sim cL^{\rho}e^{-\la L^\al},\quad \text{ as } L\to+\infty,
\end{equation}
for some $0<\al<1$, $\la>0$, $\rho \in \R$ and appropriate constant $c=c(\rho, \al)$. 

\medskip

For these three type classes of kernels, refined estimates of $R_i(L)$ when $i\gg 1$ and $L\gg 1$ are known in the literature: the next lemma is taken from \cite[Proposition 3.1]{mikosch1998large}.

\begin{lemma}\label{LDH}
	For the three classes of kernels above, assume $J$ is normalized by $m_2(J)=1$. Then define the threshold sequence $d_n$ accordingly to  
	\begin{center}
		\begin{tabular}{|l|l|}
			\hline
			 \qquad Distribution & \qquad $d_n$\\
			\hline 
			$RV(\al)$, \quad\qquad  $\al>2$ & $n^{1/2}\ln^{1/2} n  $\\
			$LN(\ga,\la,\rho)$, \quad $1<\ga\leq 2$ & $n^{1/2}\ln^{\ga/2}n$\\
			$LN(\ga,\la,\rho)$, \quad $2<\ga$ & $n^{1/2}\ln^{\ga-1}n$\\
			$WE(\al,\la,\rho)$, \; $0<\al<1$ & $n^{1/(2-2\al)}$\\
			\hline
		\end{tabular}\\
		\medskip
	Table 1: The threshold sequences $d_n$.
	\end{center}
Then, for any sequence $\gamma_n\gg d_n$, one has
	\begin{equation}\label{conclusion-table1}
	\lim_{i\to+\infty} \sup_{L\geq \gamma_i}\left\vert  \frac{R_i(L)}{iR_1(L)}-1\right \vert=0.
	\end{equation}
\end{lemma}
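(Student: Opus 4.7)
The plan is to rely on the principle of a \emph{single big jump}, which is the mechanism driving all large-deviation statements of this type for subexponential distributions. Since Lemma \ref{LDH} is the exact statement of \cite[Proposition 3.1]{mikosch1998large}, the cleanest option in the paper itself is to invoke that reference as a black box; the sketch below only outlines how one would reconstruct the argument.

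Writing $R_i(L)=\mathbb P(|S_i|\ge L)$ for $S_i=X_1+\cdots+X_i$ with $X_k$ i.i.d.\ of density $J$, I would decompose the event $\{|S_i|\ge L\}$ according to the number of summands with $|X_k|\ge\eta L$, for some small $\eta\in(0,1)$. The contribution of ``at least two large summands'' is bounded by $\binom{i}{2}R_1(\eta L)^2$, which is $o\bigl(iR_1(L)\bigr)$ when $L\ge\gamma_i$, thanks to subexponentiality. The contribution of ``exactly one large summand'' produces the main term $iR_1(L)(1+o(1))$, after letting $\eta\to 1^-$ and using the relation $R_1(\eta L)/R_1(L)\to 1$ which is characteristic of the three tail classes. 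The delicate term is the event on which $\max_k|X_k|\le\eta L$ and yet $|S_i|\ge L$: here one invokes a Fuk--Nagaev / Bernstein type inequality for the truncated sum $\sum_k X_k\1_{|X_k|\le\eta L}$, and the threshold sequence $d_n$ of Table~1 is precisely the regime in which this ``no big jump'' tail is negligible compared with $iR_1(L)$. The three scalings in the table reflect the different Chebyshev-type bounds available in each class: polynomial moment bounds for $RV(\al)$, subgaussian estimates up to a logarithmic scale for $LN$, and Weibull-type moment generating functions for $WE$.

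For the matching lower bound, I would use the almost-disjoint events $A_k=\{|X_k|\ge L(1+\delta)\}\cap\{|\sum_{j\ne k}X_j|\le \delta L\}$ and note that
$$R_i(L)\ \ge\ i\,R_1\bigl(L(1+\delta)\bigr)\,\mathbb P\bigl(|S_{i-1}|\le \delta L\bigr)\ -\ \binom{i}{2}R_1\bigl(L(1+\delta)\bigr)^2.$$
Since $J$ is even with $m_2(J)=1$, the remaining sum has mean $0$ and variance $i-1$, so Chebyshev gives $\mathbb P(|S_{i-1}|\le\delta L)\to 1$ uniformly for $L\ge\gamma_i\gg\sqrt{i}$, and subexponentiality then lets $\delta\to 0^+$.

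The genuinely hard step is the \emph{uniformity} of the upper bound over all $L\ge\gamma_i$, which is exactly what dictates the sharp thresholds $d_n$: one must couple the choice of the truncation level $\eta$ with an exponential or polynomial Markov bound on the truncated sum so that the error is absorbed uniformly as $L$ ranges over an unbounded set. This bookkeeping is carried out in \cite[Proposition 3.1]{mikosch1998large}, which we invoke directly rather than reproduce.
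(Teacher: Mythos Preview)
Your proposal is correct and matches the paper's own treatment: the paper does not prove Lemma~\ref{LDH} at all but simply quotes it verbatim from \cite[Proposition 3.1]{mikosch1998large}, exactly as you suggest. Your sketch of the single-big-jump heuristic (decompose according to the number of large summands, control the ``no big jump'' event by Fuk--Nagaev, and match with an inclusion--exclusion lower bound) is a faithful outline of the argument in that reference and in fact goes further than the paper, which offers no explanation whatsoever.
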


\medskip

Last, for some specific forms of Weibull-like tails satisfying ($0<\alpha <1$)
\begin{equation}
\label{tail-weibull-zero}
R_1(L)\sim  c e^{-L^\al}, \quad  \text{ as }L\to+\infty,
\end{equation}
more refined estimates are known:  according to \cite[(2.32)]{Nagaev1979} there exists some constant $C>0$ such that, for all $L>0$ and $i\geq 1$,
\begin{equation}\label{WEbe=0}
R_i(L)\leq C\left[\exp\left(-\frac{L^2}{20 i}\right)+iR_1\left(\frac L 2\right)\right].
\end{equation}

\section{Main results}\label{main results}

For $\ep\in(0, 1-\theta]$ and $L>0$, we consider the family of initial data $\phi_L^\ep$ given by
\begin{equation}\label{initial}
\phi_L^\ep(x):=(\theta+\ep)\1_{(-L,L)}(x),\quad x\in\R,
\end{equation}
wherein $\1_A$ denotes the characteristic function of the set $A$. We denote by $u_L^\ep=u_L^\ep(t,x)$ the solution of \eqref{nonlocal} starting from the initial datum $\phi_L^\ep$. 

The results presented in this section are concerned with the derivation of asymptotic estimates  ($\ep\ll 1$)  of the size $L$ such that the solution $u_L^\ep(t,x)$ goes extinct or propagates at large times. We split our main results into two parts. The first subsection is related to extinction while the second is concerned with propagation.

\subsection{Extinction results}\label{ss:ext}

Let us recall that, for $i\geq 1$ and $L>0$, $R_i(L)$ denotes the tails of $J^{*(i)}$ as defined in \eqref{def-R-i-L}. Our general extinction criterion reads as follows. 

\begin{theorem}[Extinction]\label{ext}
Let Assumptions \ref{ass:kernel} and \ref{ass:f} be satisfied. Let $\ep>0$ be fixed and set
\begin{equation}\label{r^+}
r^+:=\sup_{u\in (\theta,1]} \frac{f(u)}{u-\theta}>0.
\end{equation}
Assume $T>0$ and $L>0$ are such that
\begin{equation}\label{criterion}
\theta e^{-(r^++1)T}\sum_{i=1}^{+\infty} \frac{T^i}{i!}R_i(L)\geq \ep.
\end{equation}
Then the function $u_L^\ep$ is going to extinction at large times, namely
	\begin{equation*}
	\lim_{t\to+\infty}\sup_{x\in\R}u_L^\ep(t, x)=0.
	\end{equation*}
\end{theorem}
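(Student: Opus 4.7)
The strategy is to show that under \eqref{criterion} the solution falls uniformly below $\theta$ by time $T$, and then to push it down to $0$ in $L^\infty$ via linear dispersion. By the comparison principle one has $0\le u_L^\ep\le 1$, so combining \eqref{r^+} with the sign of $f$ on $[0,\theta]$ yields $f(u)\le r^+(u-\theta)_+$ on $[0,1]$. Setting $v:=u_L^\ep-\theta$ and using $J*\theta=\theta$, I obtain
$$
\partial_t v\le J*v-v+r^+ v_+,\qquad v(0,x)=\ep\,\1_{(-L,L)}(x)-\theta\,\1_{|x|\ge L}(x),
$$
and the subsolution version of Duhamel's formula (justified by the maximum principle for $\partial_t-(J*-\mathrm{Id})$ and the positivity of $K$) yields pointwise in $x$
$$
v(t,x)\le \bigl(K(t,\cdot)*v(0,\cdot)\bigr)(x)+r^+\!\int_0^t \bigl(K(t-s,\cdot)*v_+(s,\cdot)\bigr)(x)\,ds,
$$
where $K$ is as in \eqref{def-K}--\eqref{def-psi}.

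Next I plan to reduce this to a scalar inequality for $M(t):=\sup_{x\in\R}v(t,x)$. By Assumption \ref{ass:kernel}(ii) the kernel $J$ is symmetric and unimodal, and since these two properties are preserved under convolution each $J^{*(i)}$ --- and hence $K(t,\cdot)$ --- is symmetric and unimodal as well. Consequently the first Duhamel term attains its supremum at $x=0$, and using $\|K(t)\|_{L^1}=1$ together with \eqref{def-R-i-L} one computes
$$
\sup_{x\in\R}\bigl(K(t,\cdot)*v(0,\cdot)\bigr)(x)=\ep-(\theta+\ep)\,e^{-t}\sum_{i=1}^{\infty}\frac{t^i}{i!}R_i(L).
$$
The forcing integrand is bounded by $M(s)_+$ since $\|K(t-s)\|_{L^1}=1$ and $v_+(s,\cdot)\le M(s)_+$, so
$$
M(t)\le \ep-(\theta+\ep)\,e^{-t}\sum_{i=1}^{\infty}\frac{t^i}{i!}R_i(L)+r^+\!\int_0^t M(s)_+\,ds.
$$
Dropping the non-positive middle term and applying Gronwall to $M_+$ yields $M_+(t)\le \ep\,e^{r^+ t}$; re-injecting this bound gives
$$
M(T)\le \ep\,e^{r^+ T}-(\theta+\ep)\,e^{-T}\sum_{i=1}^{\infty}\frac{T^i}{i!}R_i(L)\le \ep\,e^{r^+ T}-\theta\,e^{-T}\sum_{i=1}^{\infty}\frac{T^i}{i!}R_i(L),
$$
which is non-positive because \eqref{criterion} is equivalent to $\theta\,e^{-T}\sum_i\frac{T^i}{i!}R_i(L)\ge \ep\,e^{r^+ T}$. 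Hence $u_L^\ep(T,\cdot)\le \theta$.

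It remains to promote this uniform below-$\theta$ bound to $L^\infty$ extinction. Since $\theta$ is a stationary solution of \eqref{nonlocal} and $f\le 0$ on $[0,\theta]$, comparison keeps $u_L^\ep(t,\cdot)\le \theta$ for all $t\ge T$; on this range $f\le 0$, so $\partial_t u_L^\ep\le J*u_L^\ep-u_L^\ep$ and therefore $u_L^\ep(t,\cdot)\le K(t-T,\cdot)*u_L^\ep(T,\cdot)$. The rough a priori bound $u_L^\ep(T,\cdot)\le e^{r^+ T}K(T,\cdot)*\phi_L^\ep$ (coming from $f(u)\le r^+ u$) places $u_L^\ep(T,\cdot)$ in $L^1\cap L^\infty$, and the standard dispersion-smoothing $\|K(s,\cdot)*g\|_{L^\infty}\to 0$ for $g\in L^1\cap L^\infty$ --- a consequence of $|\widehat J(\xi)|<1$ for $\xi\ne 0$ --- delivers the conclusion. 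The delicate step is the Gronwall closure: only the sharp linearization $f(u)\le r^+(u-\theta)_+$, rather than the coarser $f(u)\le r^+ u$, produces the factor $e^{-r^+ T}$ in \eqref{criterion} and recovers the sharp trade-off between $\ep$ and the tail mass $e^{-T}\sum_i\frac{T^i}{i!}R_i(L)$ at the heart of the theorem.
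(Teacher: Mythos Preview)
Your argument is correct and recovers exactly the criterion \eqref{criterion}, but by a genuinely different route from the paper.

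The paper (Proposition~\ref{prop:extinction}) first compares $u_L^\ep$ with the solution $w$ of the majorizing problem $\partial_t w=J*w-w+r^+(w-\theta)_+$, and then controls $w$ by an explicit \emph{multiplicative} supersolution $W(t,x)=v(t,x)\varphi(t)$, where $v$ solves the free linear equation and $\varphi$ solves a tailored scalar ODE, following \cite{alfaro2020quantitative}. The criterion \eqref{criterion} emerges as the condition that the first time at which $\|W(t,\cdot)\|_{L^\infty}=\theta$ is finite; the computation requires an integration by parts together with the monotonicity of $t\mapsto A_L(t):=\int_{|x|<L}K(t,x)\,dx$. You bypass this construction entirely: after the same linearization $f(u)\le r^+(u-\theta)_+$, you apply Duhamel directly to $v=u-\theta$, reduce to a scalar integral inequality for $M(t)=\sup_x v(t,x)$, and close with a Gronwall bootstrap (first $M_+(t)\le\ep\,e^{r^+t}$, then re-inject into the full inequality at $t=T$). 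Both approaches hinge on the same structural fact --- the sup of the linear flow acting on $\phi_L^\ep$ sits at the origin --- for which the paper invokes \cite{xu2021spatial} via a regularization, while you invoke preservation of symmetric unimodality under convolution. Your route is shorter and avoids the auxiliary function $\varphi$; the paper's explicit supersolution, on the other hand, makes the time-$T$ ``control'' reading (as in Corollary~\ref{asmptotic}) slightly more transparent. The final passage from $u(T,\cdot)\le\theta$ to uniform extinction is handled just as tersely in the paper (``and is thus going to extinction at large times''); your dispersion sketch via $|\widehat J(\xi)|<1$ for $\xi\neq 0$ is at the same level of rigour.
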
 

The above result has various implications exploring the asymptotic size ensuring the extinction of the solution. More precisely, for each $\ep\in (0,1-\theta]$, define the interval
$$
\mathcal L_\ep^{\rm ext}:=\left\{L>0:\;\lim_{t\to+\infty}\sup_{x\in\R}u_L^\ep(t, x)=0\right\},
$$
as well as
\begin{equation}\label{DEF-Lext}
L_\ep^{\rm ext}:=\begin{cases}\sup\mathcal L_\ep^{\rm ext}\in (0,\infty]  & \text{ if }\; \mathcal L_\ep^{\rm ext}\neq \emptyset,\\
0 & \text{ if } \;\mathcal L_\ep^{\rm ext}=\emptyset.
\end{cases}
\end{equation}
The next corollaries are concerned with a lower bounded of $L_\ep^{\rm ext}$ as $\ep\to 0^+ $.

We start with the case where the kernel $J$ has a slow decay at infinity, meaning that $0<\beta<2$ in the expansion \eqref{J-Fourier-ass}.

\begin{corollary}[Asymptotic extinction criterion, $0<\be<2$]\label{asmptotic1}
Let Assumptions \ref{ass:kernel}, \ref{ass:kernel-bis} with $0<\be <2$, and \ref{ass:f} be satisfied.  Then
\begin{equation*}
\liminf_{\ep \to 0^+} \; \ep^{\frac{1}{\beta}}L^{\rm ext}_\ep\geq C^-:= \left(\theta aC e^{-1}/r^+ \right)^{1/\be}>0,
\end{equation*}
where $a>0$ and $C=C(\beta)>0$ come from the asymptotic expansion \eqref{J-Fourier-ass} and Lemma \ref{lem:tails}-$(ii)$, respectively. 
\end{corollary}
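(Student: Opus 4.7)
The plan is to apply Theorem \ref{ext} with a suitable pair $(T,L_\ep)$ and use Lemma \ref{lem:tails}(ii) to estimate the right-hand side of criterion \eqref{criterion}. The heuristic is that, since $R_i(L)\sim iaC/L^\beta$ as $L\to+\infty$ for each fixed $i$,
$$
\sum_{i=1}^{\infty}\frac{T^i}{i!}R_i(L)\;\approx\;\frac{aC}{L^\beta}\sum_{i=1}^{\infty}\frac{i\,T^i}{i!}\;=\;\frac{aCTe^T}{L^\beta},
$$
and plugging this into \eqref{criterion} reduces the condition to $L^\beta\lesssim \theta aC\,Te^{-r^+T}/\ep$. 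The function $T\mapsto Te^{-r^+T}$ attains its maximum $1/(r^+e)$ at $T=1/r^+$, which gives $L^\beta\lesssim (C^-)^\beta/\ep$ and hence the expected lower bound on $L_\ep^{\rm ext}$.

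To make this rigorous while avoiding any uniform-in-$i$ estimate for $R_i(L)$, I would fix $\delta\in(0,C^-)$ and set $T:=1/r^+$, $L_\ep:=(C^--\delta)\,\ep^{-1/\beta}$, and verify that $L_\ep\in\mathcal L_\ep^{\rm ext}$ for $\ep$ small via Theorem \ref{ext}. The key observation is that the series may be truncated: for any integer $N\geq 1$,
$$
\sum_{i=1}^{\infty}\frac{T^i}{i!}R_i(L_\ep)\;\geq\;\sum_{i=1}^{N}\frac{T^i}{i!}R_i(L_\ep),
$$
and applying Lemma \ref{lem:tails}(ii) \emph{pointwise in $i$} to these $N$ terms yields
$$
\lim_{\ep\to 0^+} L_\ep^\beta\sum_{i=1}^{N}\frac{T^i}{i!}R_i(L_\ep)\;=\;aCT\sum_{j=0}^{N-1}\frac{T^j}{j!}.
$$
First picking $N$ large enough so that $\sum_{j=0}^{N-1}T^j/j!$ is arbitrarily close to $e^T$, and then letting $\ep\to 0^+$ with $N$ fixed, produces a lower bound for the full series arbitrarily close to $aCTe^T/L_\ep^\beta$.

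Combining these estimates with $Te^{-r^+T}=e^{-1}/r^+$ at $T=1/r^+$, the left-hand side of \eqref{criterion} is, for all $\ep$ small enough, at least
$$
(1-\mu)\,\frac{\theta aCe^{-1}}{r^+ L_\ep^\beta}\;=\;(1-\mu)\,\frac{(C^-)^\beta}{(C^--\delta)^\beta}\,\ep
$$
for any prescribed $\mu>0$. Since $(C^-)^\beta/(C^--\delta)^\beta>1$, choosing $\mu$ small makes this at least $\ep$, so Theorem \ref{ext} applies and yields $L_\ep^{\rm ext}\geq (C^--\delta)\,\ep^{-1/\beta}$ for all small $\ep$; letting $\delta\to 0^+$ concludes. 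The only subtle point is controlling the infinite tail of $\sum_i (T^i/i!)R_i(L_\ep)$ without a uniform-in-$i$ version of Lemma \ref{lem:tails}(ii), and the truncation step above neatly bypasses it; beyond that, the argument is just the one-variable optimization of $Te^{-r^+T}$.
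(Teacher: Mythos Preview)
Your proof is correct and follows essentially the same approach as the paper: truncate the series at a large finite $N$, apply Lemma~\ref{lem:tails}(ii) termwise to the finitely many $R_i(L)$, choose $N$ large so that the partial exponential sum is close to $e^T$, and finally optimize $Te^{-r^+T}$ at $T=1/r^+$. The only cosmetic difference is that the paper first proves the intermediate Corollary~\ref{asmptotic} for general fixed $T>0$ (with a parameter $\gamma\in(0,1)$ playing the role of your $1-\mu$ and $\delta$) and then maximizes over $T$, whereas you take $T=1/r^+$ from the outset.
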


Next, we explore a similar question when $\beta=2$. We start with the case of exponentially bounded kernel.

\begin{corollary}[Asymptotic extinction criterion for exponentially bounded kernels]\label{asmptotic2} Let Assumptions \ref{ass:kernel} and \ref{ass:f} be satisfied and assume further the  exponential decay \eqref{expo-decay}. Then 
\begin{equation*}
\liminf_{\ep \to 0^+}\; \frac{L_\ep^{\rm ext}}{\ln \frac{1}{\ep}}\ge C^->0,
\end{equation*}
for some $C^-=C^-(r^+,J)>0$. 
\end{corollary}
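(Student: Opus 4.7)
The plan is to apply the general extinction criterion of Theorem \ref{ext} with $T=T(\ep)$ and $L=L(\ep)$ both of order $\ln(1/\ep)$, and to extract a sharp lower bound for the series $\sum_{i\ge 1}\frac{T^i}{i!}R_i(L)$ by keeping a single well-chosen term of Poisson type. More precisely, fix some $L_0\in(0,L_1)$, and for small $\ep>0$ look for an admissible length of the form $L=T L_0$, i.e.\ set $T=L/L_0$. Then choose $i_\ast:=\lfloor T\rfloor$ so that $L\simeq i_\ast L_0$. My aim is to show that, as soon as $L\le \frac{L_0}{r^++\La^\ast(L_0)}\ln\frac{1}{\ep}(1-o(1))$, condition \eqref{criterion} is satisfied, which by definition \eqref{DEF-Lext} forces $L_\ep^{\rm ext}\ge L$.

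Two ingredients will be combined. First, Stirling's formula applied at the mode of the Poisson weights yields $\frac{T^{i_\ast}}{i_\ast!}\ge \frac{e^T}{C_1\sqrt{T}}$ for $T$ large, so that keeping only the $i_\ast$-th term gives
\begin{equation*}
e^{-(r^++1)T}\sum_{i\ge 1}\frac{T^i}{i!}R_i(L)\ \ge\ \frac{e^{-r^+T}}{C_1\sqrt{T}}\,R_{i_\ast}(L).
\end{equation*}
Second, the uniform version of Cram\'er's theorem (Lemma \ref{lem:expo}, formula \eqref{cramer}) grants, for any prescribed $\eta>0$ and all $\ep$ small enough,
\begin{equation*}
R_{i_\ast}(L)=R_{i_\ast}\!\left(i_\ast L_0+O(1)\right)\ \ge\ e^{-i_\ast(\La^\ast(L_0)+\eta)},
\end{equation*}
where the $O(1)$ correction is absorbed using monotonicity of $R_i$ in $L$ and continuity of $\La^\ast$ at $L_0$. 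Plugging these into \eqref{criterion} and taking logarithms reduces the criterion to
\begin{equation*}
T\bigl(r^++\La^\ast(L_0)+\eta\bigr)\ \le\ \ln\tfrac{1}{\ep}+\ln\theta-\tfrac{1}{2}\ln(2\pi T)-\ln C_1,
\end{equation*}
which, recalling $T=L/L_0$, holds as soon as $L\le \frac{L_0}{r^++\La^\ast(L_0)+\eta}\ln\frac{1}{\ep}(1-o(1))$, the lower-order $\ln\ln(1/\ep)$ terms being harmless. Letting $\eta\to 0^+$ and then optimizing over $L_0\in(0,L_1]$ gives the announced lower bound with
\begin{equation*}
C^-:=\sup_{L_0\in(0,L_1]}\frac{L_0}{r^++\La^\ast(L_0)}>0,
\end{equation*}
positivity being clear since $\La^\ast(L_0)<+\infty$ on $(0,L_1]$ and $L_0\mapsto L_0$ is positive there.

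The main obstacle is the use of a \emph{lower} bound on $R_{i_\ast}(L)$: Lemma \ref{lem:expo} and the Chernoff bound \eqref{esti-expo} readily provide upper bounds, whereas here the uniformity of the limit \eqref{cramer} (valid for $L_0$ ranging in a compact subinterval of $(0,L_1)$) is essential to convert an asymptotic statement into an effective lower bound that survives the substitution $L_0\mapsto L/i_\ast$, with $i_\ast$ integer-valued and $L$ depending on $\ep$. The remaining technicalities — rounding $T$ to the integer $i_\ast$, controlling the $\sqrt{T}$ and $\ln T$ corrections coming from Stirling, and ensuring that the argument $L$ of $R_{i_\ast}$ stays within the uniform range — are routine and do not affect the leading asymptotics, since all of them contribute $o(\ln\frac{1}{\ep})$ terms that get swept into the $1-o(1)$ factor.
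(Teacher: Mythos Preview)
Your proposal is correct and follows essentially the same approach as the paper: reduce the series in the extinction criterion \eqref{criterion} to a single Poisson term with index $i\approx T$, control the Poisson weight via Stirling, and obtain the needed \emph{lower} bound on $R_i(L)$ from the uniform Cram\'er limit \eqref{cramer}, with $T$ and $L$ both taken proportional to $\ln\frac{1}{\ep}$. Your treatment is slightly more careful about the constant (sharper Stirling bound and an optimization over $L_0$), but the strategy is identical to the paper's proof of Corollary~\ref{lighttail}, from which Corollary~\ref{asmptotic2} follows by comparison.
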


We pursue the $\beta=2$ case by presenting a series of classical kernels presented in subsection \ref{ss:linear2}.

\begin{corollary}[Asymptotic extinction criterion, some examples with $\be=2$]\label{ext:examples}
Let Assumptions \ref{ass:kernel} and \ref{ass:f} be satisfied. Then the following estimates hold.
	\begin{itemize}
	\item [(i)] Assume that there are $c_2>c_1>0$, $\alpha>2$ and $x_0\in \R$ such that $ \frac{c_1}{\vert x \vert^{1+\alpha}}\leq J(x) \leq  \frac{c_2}{\vert x \vert^{1+\alpha}}$ for all $\vert x\vert \geq \vert x_0\vert $. Then 
	$$
	\liminf_{\ep \to 0^+}\; \ep^{\frac {1} {\al}}L_\ep^{\rm ext}\ge C^->0,
	$$
    for some $C^-=C^-(\theta,r^+,T,c_1,\alpha)>0$.
    
    \item[(ii)]	 Assume that $J$ has  regularly varying tails $RV(\al)$ with $\alpha>2$. Then 
    $$
    \liminf_{\ep \to 0^+}\; \ep^{\frac {1} {\al'}}L_\ep^{\rm ext}\ge 1,
    $$
    for any given $\alpha'>\alpha$.
    
    \item [(iii)]  Assume that $J$ has lognormal-type tails $LN(\gamma,\lambda,\rho)$ with $\gamma>1$, $\lambda>0$ and $\rho \in \R$. Then 
    $$
	\liminf_{\ep \to 0^+}\; \frac{L_\ep^{\rm ext}}{e^{\left(\frac{1}{\la '}\ln \frac 1 \ep\right)^{\frac 1 \ga}}}\ge 1,
	$$
	for any given $\la '>\la$.
		    
    \item [(iv)]  Assume that $J$ has  Weibull-like tails $WE(\alpha,\lambda,\rho)$ with $0<\al <1$, $\lambda>0$ and $\rho \in \R$. Then 
    $$
    \liminf_{\ep \to 0^+}\; \frac{L_\ep^{\rm ext}}{\left(\frac 1{\la '}\ln \frac 1 \ep\right)^{\frac1\al}}\ge 1,
    $$    
    for any given $\la '>\la$. Furthermore, if $\rho=0$, 
    $$
    \liminf_{\ep \to 0^+}\; \frac{L_\ep^{\rm ext}}{\left(\frac 1{\la }\ln \frac 1 \ep\right)^{\frac1\al}}\ge C^-,
    $$  
    for some $C^-=C^-(\theta,r^+,T,J)>0$.
\end{itemize}
\end{corollary}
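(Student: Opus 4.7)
\textbf{Proof strategy for Corollary~\ref{ext:examples}.}

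The overall approach is to apply Theorem~\ref{ext} in its crudest form: since every $R_i(L)\ge 0$, retaining only the $i=1$ term in \eqref{criterion} gives the sufficient extinction condition
\begin{equation*}
\theta\, e^{-(r^++1)T}\, T\, R_1(L) \;\geq\; \ep
\end{equation*}
for some $T>0$. Fix any $T>0$ (one may optimize by $T=1/(r^++1)$) and set $\kappa:=\theta T e^{-(r^++1)T}>0$. Since $\mathcal L_\ep^{\rm ext}$ is downward closed (from $\phi_{L'}^\ep\leq\phi_L^\ep$ for $L'\leq L$ and the comparison principle), exhibiting any $L$ satisfying $R_1(L)\geq \ep/\kappa$ already yields the lower bound $L_\ep^{\rm ext}\geq L$. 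Each of (i)--(iv) then reduces to reading off, from the known asymptotics of $R_1$, the largest admissible $L$.

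For (i), integrating the lower bound $J(x)\geq c_1/|x|^{1+\al}$ on $|x|\geq|x_0|$ gives $R_1(L)\geq 2c_1/(\al L^\al)$ for $L\geq|x_0|$, and the condition $R_1(L)\geq \ep/\kappa$ translates into $L\leq(2c_1\kappa/(\al\ep))^{1/\al}$, whence $L_\ep^{\rm ext}\geq C^-\ep^{-1/\al}$ for $\ep$ small. For (ii), write $R_1(L)=L^{-\al}S(L)$ with $S$ slowly varying; given $\al'>\al$ and $\eta\in(0,1)$, test $L=\eta\ep^{-1/\al'}$: the sufficient condition reduces to $S(L)\,L^{\al'-\al}\geq \eta^{\al'}/\kappa$, which holds for $\ep$ small enough because any positive power beats any slowly varying function, i.e.\ $S(L)L^{\al'-\al}\to+\infty$. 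Sending $\eta\to 1^-$ yields $\liminf_{\ep\to 0^+}\ep^{1/\al'}L_\ep^{\rm ext}\geq 1$.

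For the subexponential cases (iii) and (iv), the candidates are respectively $L_\ep=\eta\, e^{(\ln(1/\ep)/\la')^{1/\ga}}$ and $L_\ep=\eta(\ln(1/\ep)/\la')^{1/\al}$, with $\eta\in(0,1)$ and $\la'>\la$ arbitrary. A direct asymptotic computation, using $\ga>1$ in (iii) to get $\ln^\ga L_\ep=(1+o(1))\ln(1/\ep)/\la'$, yields
\begin{equation*}
e^{-\la\ln^\ga L_\ep}=\ep^{\la(1+o(1))/\la'}\;\text{in (iii)},\qquad e^{-\la L_\ep^\al}=\ep^{\la\eta^\al/\la'}\;\text{in (iv)},
\end{equation*}
while the polynomial factor $L_\ep^\rho$ is only $\ep^{o(1)}$ in both situations (because $\ln L_\ep=o(\ln(1/\ep))$ in each case). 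Since $\la<\la'$ (and $\eta<1$), the exponent of $\ep$ is strictly below $1$, so $R_1(L_\ep)\gg \ep$ and the sufficient condition holds for $\ep$ small; letting $\eta\to 1^-$ produces the stated liminf bounds. For the sharpening in (iv) with $\rho=0$, the absence of the factor $L^\rho$ lets one take $\la'=\la$ at the cost of a multiplicative constant: $L_\ep=C^-(\la^{-1}\ln(1/\ep))^{1/\al}$ gives $e^{-\la L_\ep^\al}=\ep^{(C^-)^\al}$, and any $C^-<1$ makes $(C^-)^\al<1$, whence $c\kappa\,\ep^{(C^-)^\al}\geq \ep$ for $\ep$ small. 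The main technical obstacle throughout (ii)--(iv) is book-keeping, namely verifying that the subpolynomial or polynomial corrections in $R_1$ are fully absorbed by the arbitrarily small slack $\al'-\al$ or $\la'-\la$ allowed in the statement; the exponential part of the tail is what drives the asymptotics.
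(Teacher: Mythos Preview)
Your proof is correct and follows essentially the same route as the paper: both reduce Theorem~\ref{ext} to the single $i=1$ term, obtaining the sufficient condition $\theta e^{-(r^++1)T}T\,R_1(L)\geq\ep$, and then read off the largest admissible $L$ from the tail asymptotics of $R_1$ in each case. The paper's argument (via Corollary~\ref{remarkbeta=2} and Remark~\ref{rem:as-easily}) dispenses with your auxiliary parameter $\eta\in(0,1)$ by testing $L=\ep^{-1/\al'}$ (resp.\ the analogous candidates) directly and invoking the comparison principle for smaller $L$; your $\eta\to 1^-$ step is harmless but unnecessary.
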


\subsection{Propagation results}\label{ss:prop}

To state our propagation results, we need to assume that for any level $\theta+\ep>\theta$, there exists a size $L$ such that the solution $u_L^\ep$ of \eqref{nonlocal} with initial data \eqref{initial} propagates. This assumption will be largely discussed below.

\begin{assumption}[Propagation assumption]\label{ass:prop} Let Assumptions \ref{ass:kernel}-(i) and \ref{ass:f} hold.  We assume that for each $\ep\in (0,1-\theta]$ there exists $L>0$ large enough such that $u_L^\ep(t, x)$ propagates, in the sense that 
	$$
	\lim_{t\to+\infty} \; u_L^\ep(t,x)=1\; \text{ locally uniformly for on $\R$}.
	$$
\end{assumption}
Now using this and similarly as above, for each $\ep\in (0,1-\theta]$, we define the non-empty interval
$$
\mathcal L_\ep^{\rm prop}:=\left\{L>0:\;\lim_{t\to+\infty}u_L^\ep(t, x)=1\text{ locally uniformly on $\R$}\right\},
$$
as well as the quantity
\begin{equation}\label{DEF-Lprop}
L_\ep^{\rm prop}:=\inf\mathcal L_\ep^{\rm prop}\in [0,\infty).
\end{equation}

We now state our general propagation criterion (let us recall that $\delta\in(\theta,1)$ is coming from \eqref{r-moins}).

\begin{theorem}[Propagation]\label{theo-propagation}
Let Assumptions \ref{ass:kernel}-(i), \ref{ass:f} and \ref{ass:prop} holds. For any $\al\in (0,\delta-\theta)$ and $m\in (0,1)$
there exists $\ep_0>0$ small enough such that, for all $\ep\in (0,\ep_0)$, one has $L_\ep^{\rm prop}\le L$ where $L$ satisfies the following condition
\begin{equation}\label{Condition}
\frac{\ep}{2(\theta+\ep)}\geq e^{-T_\ep }\sum_{i=1}^{+\infty} \frac{T_\ep^i}{i!}R_i\left((1-m)L\right),\quad\text{ with } T_\ep=\frac{1}{r^-}\ln\frac{2\alpha}{\ep},
\end{equation}
 where we recall that $R_i(L)$ denotes the tails of $J^{*(i)}$ as defined in \eqref{def-R-i-L},
\end{theorem}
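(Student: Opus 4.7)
The strategy is to linearize \eqref{nonlocal} around the unstable equilibrium $u\equiv \theta$ using the lower bound \eqref{r-moins}, to evaluate the corresponding linear solution explicitly through the fundamental kernel $K$ of \eqref{def-K}--\eqref{def-psi}, and finally to transfer the resulting pointwise lower bound on $u_L^\ep$ to a propagation conclusion by combining the comparison principle with Assumption~\ref{ass:prop}.

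I would first invoke Assumption~\ref{ass:prop} to pick, once and for all, some $L^*=L^*(\alpha)>0$ for which the solution of \eqref{nonlocal} issued from $\phi_{L^*}^\alpha=(\theta+\alpha)\1_{(-L^*,L^*)}$ propagates. A short inspection of \eqref{Condition}, using that for fixed $L$ the tails $R_i((1-m)L)$ are bounded below independently of $\ep$ while $\frac{\ep}{2(\theta+\ep)}\to 0$, shows that any admissible $L$ must diverge to $+\infty$ as $\ep\to 0^+$; in particular $mL\geq L^*$ for $\ep$ small enough.

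Next, set $v:=u_L^\ep-\theta$. Since $\alpha<\delta-\theta$, as long as $u_L^\ep\leq \theta+\alpha$ one has $u_L^\ep\leq \delta$ and \eqref{r-moins} yields $f(u_L^\ep)\geq r^-v$. The Duhamel formula for \eqref{nonlocal}, combined with a routine iteration of the resulting integral inequality using the positivity of $K$, then produces the pointwise lower bound $v(t,x)\geq \bar v(t,x)$, where
\begin{equation*}
\bar v(t,x):=e^{r^-t}\bigl[K(t,\cdot)\ast(\phi_L^\ep-\theta)\bigr](x)
\end{equation*}
is the explicit solution of the linear problem $\partial_t\bar v=J\ast\bar v-\bar v+r^-\bar v$. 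Using $\int K(t,y)\,dy=1$ together with $\phi_L^\ep-\theta=\ep\1_{(-L,L)}-\theta\1_{\R\setminus(-L,L)}$, a direct rearrangement gives
\begin{equation*}
\bigl[K(t,\cdot)\ast(\phi_L^\ep-\theta)\bigr](x)=\ep-(\theta+\ep)\int_{|y-x|\geq L}K(t,y)\,dy.
\end{equation*}
For $x\in[-mL,mL]$, the set $\{|y-x|\geq L\}$ is contained in $\{|y|\geq(1-m)L\}$; since $(1-m)L>0$ the Dirac part in \eqref{def-K} does not contribute, so \eqref{def-psi} delivers
\begin{equation*}
\int_{|y-x|\geq L}K(t,y)\,dy\;\leq\; e^{-t}\sum_{i=1}^{+\infty}\frac{t^i}{i!}R_i((1-m)L).
\end{equation*}
Evaluating at $t=T_\ep$, so that $e^{r^-T_\ep}=2\alpha/\ep$, and injecting \eqref{Condition}, I obtain $\bar v(T_\ep,x)\geq \alpha$ uniformly in $x\in[-mL,mL]$.

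Combining the two steps: once the comparison $v\geq \bar v$ is valid up to $t=T_\ep$, one gets $u_L^\ep(T_\ep,\cdot)\geq \theta+\alpha$ on $[-mL,mL]\supset[-L^*,L^*]$, hence $u_L^\ep(T_\ep,\cdot)\geq \phi_{L^*}^\alpha$, and the comparison principle together with the choice of $L^*$ forces propagation of $u_L^\ep$, i.e. $L_\ep^{\rm prop}\leq L$. The main obstacle is to secure the linear comparison on the whole window $[0,T_\ep]\times\R$, since \eqref{r-moins} only holds on $[0,\delta]$ and $u_L^\ep$ could a priori cross $\delta$ before $T_\ep$. I plan to control this via the stopping time $\tau:=\inf\{t:\sup_x u_L^\ep(t,\cdot)\geq \theta+\alpha\}$: on $[0,\tau)$ one has $u_L^\ep<\theta+\alpha<\delta$, the linearization is legitimate, and if $\tau>T_\ep$ we conclude as above; in the remaining case $\tau\leq T_\ep$, a truncation argument, replacing $f$ from below by a non-decreasing Lipschitz $\tilde f\leq f$ that coincides with $r^-(\cdot-\theta)$ on $[0,\theta+\alpha]$ and is capped at $r^-\alpha$ on the interval where $f$ still dominates it, allows the same argument to be run on the associated sub-solution $\tilde u\leq u_L^\ep$ unconditionally and recovers the same conclusion.
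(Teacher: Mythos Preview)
Your approach is essentially the paper's: both linearize via \eqref{r-moins}, compute the linear solution through the kernel $K$, and feed the resulting lower bound at time $T_\ep$ into Assumption~\ref{ass:prop}. Your explicit computation of $\bar v$ and the tail estimate on $[-mL,mL]$ reproduce the content of the paper's Proposition~\ref{nonextinction}.

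There is, however, a gap in the way you close the comparison on $[0,T_\ep]$. The stopping-time case ``$\tau>T_\ep$'' is in fact vacuous: under \eqref{Condition} your own lower bound forces $u_L^\ep(T_\ep,\cdot)\geq\theta+\alpha$ on $[-mL,mL]$, contradicting $\sup_x u_L^\ep(T_\ep,\cdot)<\theta+\alpha$. So everything rests on the truncation, and the one you describe does not do the job. If $\tilde f$ coincides with $r^-(\cdot-\theta)$ only on $[0,\theta+\alpha]$ and is then capped at $r^-\alpha$, the spatially constant ODE supersolution $y(t)=\theta+\ep e^{r^-t}$ already reaches $\theta+\alpha$ at time $t^*=\frac{1}{r^-}\ln\frac{\alpha}{\ep}<T_\ep$; on $(t^*,T_\ep]$ one only gets $\tilde u\leq\theta+\alpha(1+\ln 2)$, and on that range $\tilde f(\tilde u)=r^-\alpha<r^-(\tilde u-\theta)$. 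Hence $\tilde u$ is no longer a supersolution of the linear problem and the inequality $\tilde u\geq\theta+\bar v$ is unjustified precisely where you need it. The paper fixes this through the factor~$2$ hidden in $T_\ep$: take $\tilde f=r^-(\cdot-\theta)$ on $[\theta,\theta+2\alpha]$ (this forces $2\alpha<\delta-\theta$), keep $\tilde f=f$ on $(-\infty,\theta)$, and interpolate below $f$ up to $\delta$. Then the ODE comparison gives $\tilde u\leq y(t)\leq\theta+2\alpha$ on all of $[0,T_\ep]$, and on $(-\infty,\theta+2\alpha]$ one has $\tilde f\geq r^-(\cdot-\theta)$ by \eqref{r-moins}, so the linear sub-solution $\theta+\bar v$ is valid up to $T_\ep$. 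With this single adjustment your argument goes through.
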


 In the following we derive an upper bound of  $L_\ep^{\rm prop}$ as $\ep\to 0^+$. As before, this asymptotic strongly depends on the tails of the kernels $J^{*(i)}$. 

\begin{corollary}[Asymptotic propagation criterion, $0<\beta<2$]\label{prop:corollary1}
	Let Assumptions \ref{ass:kernel}-(i), \ref{ass:kernel-bis} with $0<\be<2$, \ref{ass:f} and \ref{ass:prop} be satisfied. Then
	$$
	\limsup_{\ep\to0^+}\frac{L_\ep^{\rm prop}}{\left(\frac{1}{\ep}\ln\frac{1}{\ep}\right)^{\frac 1 \be}}\le C^+,
	$$
	for some $C^+=C^+(\theta,r^-,J)>0$.
\end{corollary}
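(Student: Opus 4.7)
The plan is to apply Theorem \ref{theo-propagation} and exhibit an explicit $L=L(\ep)$ of the announced order $\bigl(\tfrac{1}{\ep}\ln\tfrac{1}{\ep}\bigr)^{1/\be}$ that satisfies condition \eqref{Condition}. Fix once and for all $\al\in (0,\delta-\theta)$ and $m\in (0,1)$ (for instance $m=1/2$). The core of the argument will be a uniform-in-$i$ tail estimate of the form
$$
R_i(L)\leq \frac{Ki}{L^\be},\qquad \forall i\geq 1,\ \forall L\geq L_0,\qquad (\star)
$$
for some constants $K=K(J,\be)>0$ and $L_0=L_0(J,\be)>0$. The \emph{uniformity in $i$} is the chief obstacle, since Lemma \ref{lem:tails}-$(ii)$ only delivers the pointwise asymptotic $R_i(L)\sim iaC/L^\be$ at fixed $i$, which would not survive the summation below.

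To derive $(\star)$ I would start from Lemma \ref{lem:Durret}, which asserts
$$R_i(L)\leq \frac{L}{2}\int_{-2/L}^{2/L}\bigl[1-\widehat J(\xi)^i\bigr]\,d\xi.$$
Since $\widehat J$ is continuous with $\widehat J(0)=1$ and $|\widehat J|\leq 1$, there exists $\eta>0$ such that $\widehat J(\xi)\in [0,1]$ for all $|\xi|\leq \eta$; for such $\xi$ the factorization $1-y^i=(1-y)(1+y+\cdots+y^{i-1})$ yields $1-\widehat J(\xi)^i\leq i\bigl(1-\widehat J(\xi)\bigr)$. Combined with the bound $1-\widehat J(\xi)\leq 2a|\xi|^\be$ valid for $|\xi|$ small enough (a direct consequence of \eqref{J-Fourier-ass}), integrating over $|\xi|\leq 2/L$ with $L$ large enough so that $2/L\leq \eta$ produces a constant $K$ such that $(\star)$ holds.

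With $(\star)$ at hand, and noting that the candidate $L=L(\ep)$ tends to $+\infty$ as $\ep\to 0^+$ so that $(1-m)L\geq L_0$ eventually, the right-hand side of \eqref{Condition} is controlled by
$$e^{-T_\ep}\sum_{i=1}^{+\infty}\frac{T_\ep^i}{i!}R_i((1-m)L)\leq \frac{K e^{-T_\ep}}{((1-m)L)^\be}\sum_{i=1}^{+\infty}\frac{iT_\ep^i}{i!}=\frac{KT_\ep}{(1-m)^\be L^\be},$$
thanks to $\sum_{i\geq 1} iT_\ep^i/i!=T_\ep e^{T_\ep}$. Hence condition \eqref{Condition} is ensured as soon as
$$L^\be\geq \frac{2K(\theta+\ep)T_\ep}{(1-m)^\be\,\ep}=\frac{2K(\theta+\ep)}{(1-m)^\be r^-\ep}\ln\frac{2\al}{\ep},$$
whose right-hand side is asymptotically equivalent to $\tfrac{2K\theta}{(1-m)^\be r^-}\cdot\tfrac{1}{\ep}\ln\tfrac{1}{\ep}$ as $\ep\to 0^+$. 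Consequently, the choice $L=C^+\bigl(\tfrac{1}{\ep}\ln\tfrac{1}{\ep}\bigr)^{1/\be}$, with $C^+$ any constant strictly larger than $\bigl(2K\theta/((1-m)^\be r^-)\bigr)^{1/\be}$, fulfills \eqref{Condition} for all sufficiently small $\ep$. Theorem \ref{theo-propagation} then yields $L_\ep^{\rm prop}\leq C^+\bigl(\tfrac{1}{\ep}\ln\tfrac{1}{\ep}\bigr)^{1/\be}$, which is precisely the announced bound on the $\limsup$.
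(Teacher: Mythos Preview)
Your proof is correct and follows essentially the same route as the paper: the paper's one-line proof invokes Theorem~\ref{theo-propagation} together with Corollary~\ref{prop:0<beta<2}, and the proof of the latter performs exactly your computation---Lemma~\ref{lem:Durret}, then $1-\widehat J(\xi)^i\leq i(1-\widehat J(\xi))$ (stated there ``from the mean value theorem''), then the expansion \eqref{J-Fourier-ass}, yielding the same bound $C T_\ep/((1-m)^\be L^\be)$ on the series. Your presentation differs only cosmetically in that you first isolate the uniform-in-$i$ pointwise estimate $(\star)$ before summing, whereas the paper keeps the summation throughout; the content is identical.
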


Next, we explore a similar question when $\be=2$. For such cases with $\be=2$, we assume $r^-\in(0, 1)$, where $r^-$ is from \eqref{r-moins}. We start with the case of exponentially bounded kernel again. 

\begin{corollary}[Asymptotic propagation criterion for exponentially bounded kernels]\label{prop:corollary2}
	Let Assumptions \ref{ass:kernel}-(i), \ref{ass:f} and \ref{ass:prop} be satisfied. Assume further the  exponential decay \eqref{expo-decay} and $r^-\in(0, 1)$. Then
	$$ 
	\limsup_{\ep\to0^+}\frac{L_\ep^{\rm prop}}{\ln\frac 1 \ep}\le C^+,
	$$    
	for some $C^+=C^+(r^-,J)>0$.
\end{corollary}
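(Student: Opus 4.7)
The plan is to apply Theorem \ref{theo-propagation} and verify the sufficient condition \eqref{Condition} for some $L$ of order $\ln(1/\ep)$. The engine is a Chernoff-type bound made possible by \eqref{expo-decay}: since $J$ is even and admits an exponential moment, the moment generating function $M(\lambda):=\int_\R e^{\lambda x}J(x)\,dx=e^{\Lambda(\lambda)}$ is finite on a symmetric interval $(-\lambda_0,\lambda_0)$. For any $\lambda\in(0,\lambda_0)$, splitting $R_i(L)=2\int_L^{+\infty} J^{*(i)}(x)\,dx$ by the symmetry of $J^{*(i)}$ and using $e^{\lambda(x-L)}\geq 1$ on $\{x\geq L\}$ yields
\begin{equation*}
R_i(L)\leq 2e^{-\lambda L}\int_\R e^{\lambda x}J^{*(i)}(x)\,dx=2e^{-\lambda L}M(\lambda)^i,
\end{equation*}
which is essentially the information already encoded in \eqref{esti-expo}. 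Summing the Poisson weights then gives the key estimate
\begin{equation*}
e^{-T}\sum_{i=1}^{+\infty}\frac{T^i}{i!}R_i(L)\leq 2e^{-\lambda L}\,e^{T(M(\lambda)-1)}.
\end{equation*}

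Next I would apply this bound with $L$ replaced by $(1-m)L$ and $T=T_\ep=\frac{1}{r^-}\ln\frac{2\alpha}{\ep}$, so that \eqref{Condition} is implied by
\begin{equation*}
(1-m)\lambda L\geq\frac{M(\lambda)-1}{r^-}\ln\frac{2\alpha}{\ep}+\ln\frac{4(\theta+\ep)}{\ep}.
\end{equation*}
Both logarithms on the right are equivalent to $\ln(1/\ep)$ as $\ep\to 0^+$, so any
\begin{equation*}
L\geq\left\{\frac{1}{(1-m)\lambda}\left[\frac{M(\lambda)-1}{r^-}+1\right]+o(1)\right\}\ln\frac{1}{\ep}
\end{equation*}
satisfies \eqref{Condition} for $\ep$ small enough. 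Theorem \ref{theo-propagation} then yields, for each admissible pair $(\lambda,m)$,
\begin{equation*}
\limsup_{\ep\to 0^+}\frac{L_\ep^{\rm prop}}{\ln(1/\ep)}\leq\frac{1}{(1-m)\lambda}\left[\frac{M(\lambda)-1}{r^-}+1\right].
\end{equation*}
Letting $m\to 0^+$ and minimizing in $\lambda$ then produces the announced constant
\begin{equation*}
C^+:=\inf_{\lambda\in(0,\lambda_0)}\frac{1}{\lambda}\left[\frac{M(\lambda)-1}{r^-}+1\right],
\end{equation*}
which depends only on $r^-$ and $J$.

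The only real obstacle is verifying that $C^+$ is positive and finite. Near $\lambda=0$, $M(\lambda)-1=O(\lambda^2)$ (because $J$ is even, so $M'(0)=0$), while the prefactor $1/\lambda$ diverges; as $\lambda\uparrow\lambda_0$, $M(\lambda)$ itself diverges. The infimum is therefore attained at an interior critical point, giving a finite positive constant. Beyond this elementary optimization, no deeper probabilistic input is required: in particular this corollary uses neither Lemma~\ref{lem:tails}, Lemma~\ref{LDH}, nor the sharp asymptotic \eqref{cramer} of Cram\'er's theorem, but only a direct Chernoff computation wrapped in the propagation framework of Theorem \ref{theo-propagation}.
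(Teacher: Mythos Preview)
Your proof is correct and in fact takes a cleaner route than the paper. The paper deduces this corollary by combining Theorem \ref{theo-propagation} with Corollary \ref{prop:exp-decay}, whose proof first invokes Lemma \ref{LE-E1} to truncate the Poisson sum to indices $M_\ep\le i\le N_\ep$ (this truncation is precisely where the hypothesis $r^-\in(0,1)$ enters) and only then applies the Chernoff bound \eqref{esti-expo} term by term. You instead sum the full series in closed form via
\[
e^{-T}\sum_{i\ge 1}\frac{T^i}{i!}R_i(L)\le 2e^{-\lambda L}e^{T(M(\lambda)-1)},
\]
which bypasses the truncation entirely. This not only shortens the argument but reveals that, for this particular corollary, the restriction $r^-<1$ is not actually needed: your bound produces a finite $C^+$ for any $r^->0$. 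One small quibble: you do not need the infimum over $\lambda$ to be attained at an interior critical point, and $M(\lambda)$ need not diverge at a finite endpoint (e.g.\ when $J$ is compactly supported the domain of $M$ is all of $\R$); but this is harmless, since any single admissible $\lambda\in(0,\lambda_0)$ already gives a finite positive constant, which is all the statement requires.
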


We pursue the $\be=2$ case by presenting a series of classical kernels given in subsection \ref{ss:linear2}. 

\begin{corollary}[Asymptotic propagation criterion, some examples with $\be=2$]\label{prop:corollary3}
	Let Assumptions \ref{ass:kernel}-(i), \ref{ass:f} and \ref{ass:prop} be satisfied. In addition, assume $r^-\in(0, 1)$ and $m_2(J)=\int_{\R}x^2J(x)dx=1$. Then we have the following results.
\begin{itemize}	  
   \item [(i)] Assume that there are $c_2>c_1>0$, $\alpha>2$ and $x_0\in \R$ such that $\frac{c_1}{\vert x\vert^{1+\alpha}}\leq J(x)\leq \frac{c_2}{\vert x\vert ^{1+\alpha}}$ for all $\vert x\vert  \geq \vert x_0\vert$ then 
   $$
   \limsup_{\ep\to0^+}\frac{L_\ep^{\rm prop}}{\left(\frac{1}{\ep}\ln\frac{1}{\ep}\right)^{\frac 1 \alpha}}\le C^+,
   $$
   for some $C^+=C^+(\theta, r^-, c_2, \alpha)>0$.

   \item [(ii)] Assume that $J$ has regularly varying tails $RV(\al)$ with $\al>2$. Then 
   $$
   \limsup_{\ep\to0^+}\frac{L_\ep^{\rm prop}}{\left(\frac{1}{\ep}\ln\frac{1}{\ep}\right)^{\frac 1 {\tilde \al}}}\le 1,
   $$
   for any given $0<\tilde \alpha<\alpha$. 

   \item [(iii)] Assume that $J$ has lognormal-type tails $LN(\ga,\la,\rho)$ with $\ga>1$, $\la>0$, $\rho \in \R$. Then 
   $$
   \limsup_{\ep\to0^+}\frac{L_\ep^{\rm prop}}{\exp \left[\left(\frac{1}{\tilde \lambda} \ln \frac 1\ep\right)^\frac{1}{\ga}\right]}\le 1,
   $$
   for any given $0<\tilde \lambda<\lambda$.	
       	
   \item [(iv)] Assume that $J$ has Weibull-like tails $WE(\al,\la,\rho)$ with $0<\al<1$, $\la>0$, $\rho \in \R$. Then 
   $$
   \limsup_{\ep\to 0^+}\frac{L_\ep^{\rm prop}}{\left(\frac  1{\tilde \lambda}\ln \frac 1\ep\right)^{\frac 1\alpha}}\leq 1, \quad \text{ if } \quad 0<\al<\frac 2 3,
   $$
   for any given $0<\tilde \lambda<\lambda$, while	
   $$
   \limsup_{\ep\to 0^+}\frac{L_\ep^{\rm prop}}{\left(\ln \frac 1 \ep \right)^{\frac{1}{2(1-\tilde \alpha)}}}\leq 1, \quad \text{ if } \quad \frac 2 3\leq \alpha <1,
   $$
   for any given $0<\alpha<\tilde \alpha<1$. Furthermore, if $\rho=0$,
   $$
	\limsup_{\ep\to 0^+}\frac{L_\ep^{\rm prop}}{\left(\frac 1 \la \ln \frac 1 \ep \right)^{\frac{1}{\alpha}}}\leq C^+,
	$$
	for some $C^+=C^+(J)>0$.
\end{itemize}
\end{corollary}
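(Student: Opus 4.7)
The plan is to apply Theorem \ref{theo-propagation} with fixed $\eta\in(0,\delta-\theta)$ and $m\in(0,1)$, and in each case to exhibit a length $L=L(\ep)$ of the claimed order that satisfies condition \eqref{Condition} as $\ep\to 0^+$; this will give $L_\ep^{\rm prop}\leq L(\ep)$. Since $T_\ep=\frac{1}{r^-}\ln(2\eta/\ep)\sim\frac{1}{r^-}\ln(1/\ep)$, the task amounts to bounding the Poisson-weighted sum
\[
\Sigma_\ep(L):=e^{-T_\ep}\sum_{i\geq 1}\frac{T_\ep^i}{i!}R_i((1-m)L)
\]
by $\ep/(2(\theta+\ep))$. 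The key observation is that $\Sigma_\ep(L)=\mathbb{E}[R_{N_{T_\ep}}((1-m)L)]$ with $N_{T_\ep}$ Poisson of parameter $T_\ep$, so the index $i$ concentrates around $T_\ep\sim\ln(1/\ep)$. Provided $(1-m)L\gg d_{T_\ep}$, Lemma \ref{LDH} yields $R_i((1-m)L)\leq 2iR_1((1-m)L)$ on a bulk range $i_0\leq i\leq KT_\ep$ for $K=K(r^-)$ large; contributions from $i<i_0$ are absorbed via $R_i\leq 1$ and $e^{-T_\ep}=\mathcal{O}(\ep^{1/r^-})=o(\ep)$ (using $r^-<1$), while the Poisson tail beyond $KT_\ep$ contributes $\mathcal{O}(\ep)$ by a standard Chernoff estimate. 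Combining with $\sum_{i\geq 1}\frac{T_\ep^i}{i!}i=T_\ep e^{T_\ep}$, this delivers
\[
\Sigma_\ep(L)\leq 2T_\ep R_1((1-m)L)(1+o_\ep(1))+\mathcal{O}(\ep),
\]
so that \eqref{Condition} reduces, up to absorbed constants, to
\[
R_1((1-m)L)\lesssim \ep/\ln(1/\ep).
\]

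Inverting this in each case produces the claimed scalings. For (i), $R_1(L)\leq c_2 L^{-\alpha}$ inverts directly to $L\sim(\ln(1/\ep)/\ep)^{1/\alpha}$. For (ii), the slowly varying $S$ obeys $S(L)\leq L^{\alpha-\tilde\alpha}$ eventually for any fixed $\tilde\alpha<\alpha$, giving $L\sim(\ln(1/\ep)/\ep)^{1/\tilde\alpha}$. For (iii), one solves $\lambda\ln^\gamma L\gtrsim\ln(1/\ep)$ and replaces $\lambda$ by $\tilde\lambda<\lambda$ to absorb the $L^\rho$ prefactor and the $\ln\ln(1/\ep)$ corrections, yielding $L\sim\exp[(\ln(1/\ep)/\tilde\lambda)^{1/\gamma}]$. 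For (iv) with $0<\alpha<2/3$, the analogous inversion of $\lambda L^\alpha\gtrsim\ln(1/\ep)$ gives $L\sim(\ln(1/\ep)/\tilde\lambda)^{1/\alpha}$; the concentration hypothesis $d_{T_\ep}\sim(\ln(1/\ep))^{1/(2(1-\alpha))}\ll L$ is precisely the inequality $1/\alpha>1/(2(1-\alpha))$, i.e.\ $\alpha<2/3$.

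The main obstacle is the Weibull regime $\alpha\in[2/3,1)$, where $d_{T_\ep}$ overtakes the natural length $(\ln(1/\ep))^{1/\alpha}$ and Lemma \ref{LDH} is no longer available at that scale. The remedy is to enlarge $L$ at the cost of a weaker exponent: take $L=(\ln(1/\ep))^{1/(2(1-\tilde\alpha))}$ for $\tilde\alpha\in(\alpha,1)$ arbitrarily close to $\alpha$, which dominates $d_{T_\ep}$ since $1-\tilde\alpha<1-\alpha$; moreover the elementary inequality $\alpha\geq 2(1-\tilde\alpha)$, automatic for $\tilde\alpha>\alpha\geq 2/3$, ensures $L^\alpha\geq \ln(1/\ep)$, so $e^{-\lambda L^\alpha}$ decays faster than any power of $\ep$ and the reduced condition is met. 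Finally, the sharpened statement when $\rho=0$ requires bypassing Lemma \ref{LDH} altogether: substituting the explicit bound \eqref{WEbe=0}, $R_i(L)\leq C[e^{-L^2/(20i)}+iR_1(L/2)]$, into $\Sigma_\ep(L)$ and choosing $L=C^+(\lambda^{-1}\ln(1/\ep))^{1/\alpha}$ with $C^+$ large enough, both contributions are $\mathcal{O}(\ep)$ after summation against the Poisson weights, which recovers the original constant $\lambda$ in place of $\tilde\lambda$.
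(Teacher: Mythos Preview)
Your proposal is correct and follows essentially the same approach as the paper: reduce condition \eqref{Condition} via Poisson concentration of the index $i$ around $T_\ep$, apply Lemma \ref{LDH} on the bulk range to replace $R_i$ by $iR_1$, and then invert the resulting inequality $R_1((1-m)L)\lesssim \ep/\ln(1/\ep)$ case by case (with \eqref{WEbe=0} substituted directly in the $\rho=0$ Weibull case). The only notable variation is your lower truncation: you cut at a fixed $i_0$ and use $e^{-T_\ep}\sum_{i<i_0}T_\ep^i/i!=O(\ep^{1/r^-}(\ln\tfrac{1}{\ep})^{i_0-1})=o(\ep)$ from $r^-<1$, whereas the paper truncates at $[\gamma^- T_\ep]$ via a Stirling argument (Lemma \ref{LE-E1}); your version is slightly more economical but yields the same reduced criterion \eqref{condition}. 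Two small points of sloppiness worth tightening: the tail contributions you label $\mathcal{O}(\ep)$ must actually be $o(\ep)$ (or carry a constant strictly below $1/(2\theta)$) for \eqref{Condition} to close, which is easily arranged by taking $K$ large enough; and in the $\rho=0$ Weibull case the term $e^{-L^2/(20i)}$ in \eqref{WEbe=0} still requires the upper truncation $i\leq KT_\ep$ before it can be bounded uniformly.
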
   

Note that when $\rho=0$ in $(iv)$, the assumption $m_2(J)=1$ is not necessary since we directly rely on estimate \eqref{WEbe=0}.

\subsection{Combining, summarizing  and commenting the above results}

Let the dispersal kernel satisfy Assumptions \ref{ass:kernel} and \ref{ass:kernel-bis}. Let the nonlinearity $f$ be of the bistable or ignition type in the sense of Assumption \ref{ass:f}. We considered the solution $u_L^\ep=u_L^\ep(t,x)$ to the one-dimensional reaction integro-differential equation \eqref{nonlocal} starting from the step function $\phi_L^\ep(x):=(\theta+\ep)\1_{(-L,L)}(x)$. Under the propagation Assumption \ref{ass:prop}, to be discussed in the next subsection, we investigated the values $0<L_\ep^{ext}\leq L_\ep^{prop}<+\infty$ in the asymptotic regime $\ep\to 0^+$, meaning that the height of the initial step function tends to the threshold value $\theta$ of nonlinearity $f$.  For a large variety of dispersal kernels, we proved some estimates stated in subsection \ref{ss:ext} and \ref{ss:prop} that we now combine and comment. 

\medskip

If $0<\beta <2$ in Assumption \ref{ass:kernel-bis}, meaning in particular that $m_2(J)=\int_{\R}x^2J(x)dx=+\infty$, then 
		$$
		0<\liminf_{\ep \to 0^+}\frac{L_\ep^{\rm ext}}{\left(\frac 1 \ep\right)^{\frac 1 \be}}\le \limsup_{\ep\to0^+}\frac{L_\ep^{\rm prop}}{\left(\frac{1}{\ep}\ln\frac{1}{\ep}\right)^{\frac 1 \be}}<+\infty, \quad \text{ (heavy tails)}.
		$$
In this  situation, the \lq\lq main term'' is of magnitude $\left(\frac 1\ep\right)^{\frac1\beta}$, but the lower and upper bounds coincide only up to a logarithmic term.

\medskip 

If $m_2(J)=\int_{\R}x^2J(x)dx<+\infty$, meaning $\beta=2$ in Assumption \ref{ass:kernel-bis}, we distinguish the following prototype situations under the additional assumption $r^-\in(0,1)$, that we believe to be only technical. 

If $J$ is exponentially bounded as in \eqref{expo-decay} then 
		$$
		0<\liminf_{\ep \to 0^+}\frac{L_\ep^{\rm ext}}{\ln \frac 1 \ep}\le \limsup_{\ep\to0^+}\frac{L_\ep^{\rm prop}}{\ln\frac 1 \ep}<+\infty, \quad \text{ (exponentially bounded tails)}.
		$$
		In this situation, the lower and upper bounds are {\it sharp} in the sense that they coincide up to a multiplicative constant.
		
		If $J$ has \lq\lq not heavy algebraic tails'', in the sense that there are 
$c_2>c_1>0$, $\alpha>2$ and $x_0\in \R$ such that
$$
\frac{c_1}{\vert x \vert^{1+\alpha}}\leq J(x) \leq  \frac{c_2}{\vert x \vert^{1+\alpha}}, \quad \text{ for all }  \vert x\vert \geq \vert x_0\vert,
$$
and moreover $m_2(J)=1$, then
		$$
		0<\liminf_{\ep \to 0^+}\frac{L_\ep^{\rm ext}}{\left(\frac 1 \ep\right)^{\frac 1 \al}}\le \limsup_{\ep\to0^+}\frac{L_\ep^{\rm prop}}{\left(\frac{1}{\ep}\ln\frac{1}{\ep}\right)^{\frac 1 \al}}<+\infty, \quad \text{ (not heavy algebraic tails)}.
		$$
		In this  situation, the main term is of magnitude $\left(\frac 1\ep\right)^{\frac1\alpha}$, but the lower and upper bounds coincide only up to a logarithmic term.

If $J$ has regularly varying tails $RV(\alpha)$ with $\alpha>2$ and $m_2(J)=1$, then for any $0<\tilde \alpha<\alpha<\alpha'$,
$$
	\limsup_{\ep\to0^+}\frac{L_\ep^{\rm prop}}{\left(\frac{1}{\ep}\ln\frac{1}{\ep}\right)^{\frac 1 {\tilde \al}}}\leq 1\leq \liminf_{\ep \to 0^+}\; \frac{L_\ep^{\rm ext}}{\left(\frac 1 \ep\right)^{\frac {1} {\al'}}} \quad \text{ (regularly varying tails)}.
$$
The situation is here less clear than the not heavy algebraic tails since, from the above, it is not clear that the main term is of magnitude  $\left(\frac 1\ep\right)^{\frac1\alpha}$.

If $J$ has lognormal-type tails $LN(\ga,\la,\rho)$ with $\ga>1$, $\la>0$, $\rho \in \R$ and $m_2(J)=1$, then for any $0<\tilde \lambda<\lambda<\lambda'$,
	$$
	    	\limsup_{\ep\to0^+}\frac{L_\ep^{\rm prop}}{\exp \left[\left(\frac{1}{\tilde \lambda} \ln \frac 1\ep\right)^\frac{1}{\ga}\right]}\le 1 \leq 
		\liminf_{\ep \to 0^+}\; \frac{L_\ep^{\rm ext}}{\exp\left[\left(\frac{1}{\la '} \ln \frac 1 \ep\right)^{\frac 1 \ga}\right]}, \quad \text{ (lognormal-type tails)}.
		$$
	
If $J$ has Weibull-like tails $WE(\al,\la,\rho=0)$ with $0<\al<1$, $\la>0$ (see above for the $\rho\neq 0$ case) then 
    $$
  0< \liminf_{\ep \to 0^+}\; \frac{L_\ep^{\rm ext}}{\left(\frac 1 \lambda \ln \frac 1 \ep\right)^{\frac 1\al}}\leq \limsup_{\ep\to 0^+}\frac{L_\ep^{\rm prop}}{\left(\frac 1 \lambda \ln \frac 1 \ep \right)^{\frac{1}{\alpha}}}<+\infty, \quad \text{ (Weibull-like tails $WE(\al, \la, \rho=0)$)}.
	$$
In this situation, the main term is of magnitude $\frac{1}{\left(\ln \frac 1 \ep\right)^{\frac 1 \alpha}}$ and the lower and upper bounds are sharp. 
	
\medskip
	
As a conclusion, our analysis reveals that the main term (or, at least, the approximate main term) driving the asymptotics of $L_\ep^{ext}$ and $L_\ep^{prop}$ is, in some sense, selected by the tails of $J$ and can take a large variety of different forms. 

\subsection{Towards Assumption \ref{ass:prop}}\label{ss:towards}

The propagation threshold in nonlocal diffusion problems is of fundamental importance and independent interest.  In this subsection, we derive  some sufficient conditions for the  propagation Assumption \ref{ass:prop} to hold. We place ourselves in the following {\it bistable} situation.

\begin{assumption}[Dispersal kernel and bistable nonlinearity]\label{TWA}  The dispersal kernel $J\in C^1(\R)$ is nonnegative, even, and satisfies $\int_\R J(x)dx=1$. Assume further $J'\in L^1(\R)$ and the existence of a finite first moment, namely 
	\begin{equation}\label{first_moment}
	\int_\R |y|J(y)dy<+\infty.
	\end{equation}
The nonlinearity $f\in C^1(\R)$ is bistable in the sense of \eqref{0-theta-1}---\eqref{bist-ignition} and further satisfies $f'(0)<0$, $f'(1)<0$.
\end{assumption}

A traveling wave solution $(c,U)$ for \eqref{nonlocal} is a speed $c\in \R$ and a profile $U=U(z)$ solving
\begin{equation}\label{TW}
\begin{cases}
J\ast U-U-cU'+f(U)=0,\quad \text{on }\,\R,\\
U(-\infty)=0, \quad U(+\infty)=1,
\end{cases}
\end{equation} 
meaning in particular that $u(t,x):=U(x+ct)$ solves \eqref{nonlocal}.
The existence, uniqueness and properties of traveling wave solutions in nonlocal diffusion equations with monostable, ignition or bistable nonlinearities has attracted a lot of attention, see e.g. \cite{Erm-Mac-93}, \cite{bates1997traveling}, \cite{Che-97}, \cite{coville2006uniqueness,coville2007travelling}, \cite{Cov-Dup-07}, \cite{Mel-Roq-Sir-14}, \cite{Gui-Hua-15}, \cite{alfaro2017propagation} and the references therein. 

Under Assumption \ref{TWA}, the existence of traveling waves is studied in the important work of Bates et al. \cite{bates1997traveling} (to which we refer for more precise statements): there exists a monotone solution, in the {\it weak} sense, to \eqref{TW}. The smoothness of the profile $U$ is a delicate issue. More precisely, assuming further a structure condition on $g(u):=u-f(u)$, see hypothesis (H3) in \cite{bates1997traveling}, some of the conclusions of \cite[Theorem 3.1]{bates1997traveling} are as follows: if $c\neq 0$ then the profile $U$ is smooth and the sign of the speed is that of $\int_0^1f(u)du$; on the other hand if $c=0$ there are some situations, see  \cite[end of Section 3]{bates1997traveling}, where the profile $U$ is discontinuous, and this can happen even if $\int_0^1f(u)du > 0$. Moreover, 
the $c=0$ case is ruled out by the non-existence of a {\it null-truncation} of $g(u)=u-f(u)$ in the sense of \cite[Theorem 1]{berestycki2017non}. In particular, a sufficient condition to enforce $c>0$ is to assume 
\begin{equation}
\label{hyp:g-monotone-prop}
u \mapsto u-f(u) \text{  is monotone and }  \int_0^1f(u)du > 0.
\end{equation}
This shows the relevance of our sufficient conditions for propagation to occur, which we now state.

\begin{theorem}[Propagation threshold]\label{theo-ass-prop}
Let Assumption \ref{TWA} be satisfied and assume $\int_0^1f(u)du>0$. Assume the existence of $(c,U)$  an increasing traveling wave solution to \eqref{nonlocal} with $c\neq 0$ (and thus $c>0$ from the above discussion). Then the conclusions of the propagation Assumption \ref{ass:prop} hold: for each $\ep\in (0,1-\theta]$ there exists $L>0$ large enough such that $u_L^\ep(t, x)$ propagates, in the sense that 
\begin{equation}\label{qqch}
	\lim_{t\to+\infty} \; u_L^\ep(t,x)=1\; \text{ locally uniformly for on $\R$}.
	\end{equation}
\end{theorem}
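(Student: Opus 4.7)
\textit{Proof plan.} The plan is to exhibit, for $L$ large enough, a sub-solution $\underline u$ of \eqref{nonlocal} satisfying $\underline u(0,\cdot) \leq \phi_L^\ep$ and $\underline u(t,x) \to 1$ locally uniformly, so that the conclusion follows by the comparison principle. The construction relies on superposing two copies of the traveling wave propagating in opposite directions. Using the TW equation $J\ast U - U - cU' + f(U) = 0$ and the evenness of $J$ (via the substitution $y \mapsto -y$ in the convolution), one checks that both
\[
u_1(t,x) := U(x+ct+B), \qquad u_2(t,x) := U(-x+ct+B)
\]
are exact solutions of \eqref{nonlocal}. In the spirit of the classical Fife--McLeod construction, and of its nonlocal adaptations (see e.g.\ \cite{bates1997traveling, Che-97, coville2007travelling}), I would set
\[
\underline u(t,x) := U\!\bigl(x + ct + B - \eta(t)\bigr) + U\!\bigl(-x + ct + B - \eta(t)\bigr) - 1 - \gamma e^{-\mu t},
\]
with $B>0$ a shift parameter, $\gamma, \mu > 0$ small, and $\eta(t) := \tfrac{\beta}{\mu}(1 - e^{-\mu t})$ an increasing bounded correction vanishing at $t=0$.

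Denoting $U_i$ and $U_i'$ the values of $U$ and $U'$ at the argument of the $i$-th copy, a direct computation based on the TW equation yields
\[
\mathcal L[\underline u] := \partial_t \underline u - J\ast \underline u + \underline u - f(\underline u) = -\eta'(t)\bigl(U_1' + U_2'\bigr) + \gamma\mu e^{-\mu t} + f(U_1) + f(U_2) - f(\underline u).
\]
The next step is to choose $B, \gamma, \mu, \beta$ so that $\mathcal L[\underline u] \leq 0$ on $\{\underline u \geq 0\}$. In the region where both $U_1, U_2$ are close to $1$, a Taylor expansion (using $f(1)=0$ and $f'(1)<0$) reduces the inequality essentially to $\mu + f'(1) \leq 0$, satisfied for $\mu < |f'(1)|$. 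In the more delicate intermediate regime where, say, $U_1 \approx 1$ but $U_2$ lies in the middle of the profile (where $f'(U_2)$ may be positive), the term $-\eta'(t)(U_1' + U_2')$ is strictly negative thanks to the lower bound $U_2' \geq c_0 > 0$ on compact $z$-intervals, and absorbs the positive contributions provided $\beta$ is chosen large enough relative to $\gamma$. On the set $\{\underline u < 0\}$ the sub-solution inequality is trivial.

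For the initial domination, at $t=0$, $\underline u(0,x) = U(x+B) + U(-x+B) - 1 - \gamma$ is even, attains its maximum $2U(B) - 1 - \gamma$ at $x = 0$, and tends to $-\gamma$ at $\pm\infty$. Hence choosing $B$ so that $2U(B) - 1 - \gamma \leq \theta + \ep$ (possible since $U(+\infty) = 1$), there is $L_0(\ep)$ such that $\underline u(0, x) \leq 0$ for $|x| \geq L_0(\ep)$, and so $\underline u(0,\cdot) \leq \phi_L^\ep$ for all $L \geq L_0(\ep)$. By the comparison principle $u_L^\ep \geq \underline u$; since $c>0$, for fixed $x$ the arguments $\pm x + ct + B - \eta(t)$ tend to $+\infty$, so $U_i \to 1$, $\underline u(t,x) \to 1$ locally uniformly, and since $u_L^\ep \leq 1$ this yields \eqref{qqch}.

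I expect the main technical obstacle to lie in the sub-solution verification in the intermediate regime above: the Fife--McLeod shift $\eta$ must be simultaneously tuned to the scale of the perturbation $\gamma e^{-\mu t}$, to the lower bound on $U'$ across the central part of the wave profile, and to the bound on $|f'|$, while keeping $\eta$ small enough not to corrupt the initial comparison. The nonlocal diffusion itself causes no additional difficulty, since the ansatz is a sum of shifted exact solutions of \eqref{nonlocal} plus a purely time-dependent constant, so the residual $\mathcal L[\underline u]$ collapses to reaction terms.
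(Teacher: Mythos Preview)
Your overall strategy---a Fife--McLeod type sub-solution built from two counter-propagating copies of the wave---is the same as the paper's, and the residual computation you write down is correct. However, your specific choice of corrections $q(t)=\gamma e^{-\mu t}$ and $\eta'(t)=\beta e^{-\mu t}$ is inadequate for the generality claimed in the theorem, and this is a genuine gap.

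The issue is hidden in your treatment of the intermediate regime (and, in fact, already in the ``both near $1$'' regime). After using $f'(1)<0$ to control $f(U_-)-f(\underline u)$, you are left with a positive term $f(U_+)\leq b(1-U_+)$ (and similarly a term $(C+b)(1-U_+)$ in the intermediate case). You implicitly assume this is dominated by $\beta e^{-\mu t}$, but under the sole first-moment hypothesis \eqref{first_moment} on $J$, the wave $U$ need \emph{not} converge to $1$ exponentially: for heavy-tailed kernels the tails of $U$ can be algebraic. Hence $1-U_+(t,x)=1-U(x+ct-\xi(t))$ may decay only like a negative power of $t$, and no choice of $\beta$ makes $\beta e^{-\mu t}c_0$ absorb it for large $t$. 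Your Taylor-expansion argument in the first regime similarly discards these ``lower-order'' terms, which are precisely the ones that fail to be exponentially small.

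The paper fixes this by taking
\[
q(t)=q_0 e^{-\mu t}+b\int_0^t e^{-\mu(t-s)}\bigl(1-U(cs+s_0)\bigr)\,ds,
\]
so that $q'(t)+\mu q(t)=b(1-U(ct+s_0))$ exactly cancels the offending $b(1-U_+)$ term (since $\zeta_+\geq ct+s_0$), and an analogous, more elaborate $\eta(t)$ involving $\int_0^t(1-U(cs+s_0))\,ds$. The key point---and the place where the first-moment assumption enters---is that these new corrections remain \emph{bounded}, which follows from the integrability $\int_0^\infty(1-U)<\infty$ established in Lemma~\ref{integrability}. So your remark that ``the nonlocal diffusion itself causes no additional difficulty'' is true at the algebraic level of the residual, but misleading: the heavy kernel tails are transmitted to the wave tails, and this forces a genuinely different tuning of $q$ and $\eta$ than in the classical (local or thin-tailed) Fife--McLeod construction.
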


Theorem \ref{theo-ass-prop} thus provides sufficient conditions for the  existence of $L^{\rm prop}_\ep<+\infty$, see \eqref{DEF-Lprop}, in the bistable case, and thus in the ignition case due to comparison arguments.
  
For classical diffusion bistable equations, the fact that \lq\lq sufficiently large'' step initial data lead to propagation is well-known,  see \cite{Aro-Wei-78} for \lq\lq sufficiently high'' step initial data, and \cite{fife1977approach} for step initial data with any height larger than $\theta$. In the nonlocal diffusion setting, let us mention two propagation threshold properties proved by Berestycki and Rodr\' iguez \cite[Theorem 10]{berestycki2017non} and Lim
\cite[Theorem 1.2]{limlong}  for \lq\lq sufficiently high'' step initial data and under the assumption that the kernel is exponentially bounded in the sense of \eqref{expo-decay}. These results also require a structure assumption as 
\eqref{hyp:g-monotone-prop} or a \lq\lq $c>0$ traveling wave'' assumption. They 
mainly rely on the construction of a compactly supported sub-solution using energy methods \cite{berestycki2017non} or traveling wave solutions \cite{limlong}. Theorem \ref{theo-ass-prop}  improves these results into two directions.  First, we cover more dispersal kernels by only requiring the existence of a finite first moment, see \eqref{first_moment}.  Second, and even more important, we allow  the height of the step initial data which we consider to be $\theta+\ep$ for any small $\ep>0$. This is achieved by performing a nonlocal genralization of the arguments in \cite{fife1977approach}. This  requires a finite first moment of the dispersal kernel to  guarantee the existence of a traveling wave solution $(c,U)$ and to collect some useful integrability properties of the wave $U$ at $\pm \infty$. It would be interesting  to  investigate on the  existence (or not) of a propagation threshold for very heavy kernels not admitting a finite first moment, and for step initial data with height $\theta+\ep$. We believe that this is a delicate issue.

\section{Extinction criterion}\label{extinction}

In this section we consider the following semilinear problem
\begin{equation}\label{super}
\begin{cases}
\partial _t w=J\ast w-w+g(w), & t>0, \, x\in\R,\\
w(0, x)=(\theta+\ep)\1_{(-L, L)}(x), & x\in\R,
\end{cases}
\end{equation}
where $\ep>0$, $L>0$ and 
$$
g(u):=r^+(u-\theta)_+.
$$
Here $r^+>0$ and $\theta>0$ are given and fixed parameter and the subscript $+$ is used to denote the positive part of a real number. We start with a criterion for extinction which does not require $\ep$ to be small.

\begin{proposition}[Extinction criterion]\label{prop:extinction} Let Assumption \ref{ass:kernel} be satisfied. Let $\ep>0$ be fixed. Assume that $T>0$ and $L>0$ are such that
\begin{equation}\label{ext-criterion}
\theta e^{-(r^++1)T}\sum_{i=1}^{+\infty} \frac{T^i}{i!}R_i(L)\geq \ep.
\end{equation}
	Then the solution $w=w(t, x)$ to \eqref{super} satisfies
	\begin{eqnarray}
	\sup_{x\in\R}w(T, x)\leq\theta,
	\end{eqnarray}
	and is thus going to extinction at large times.
\end{proposition}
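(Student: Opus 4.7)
My plan is to construct an explicit upper bound for $w$ by decomposing the initial datum $(\theta+\ep)\1_{(-L,L)}$ into its ``threshold part'' $\theta\1_{(-L,L)}$ and its ``excess part'' $\ep\1_{(-L,L)}$, treated separately. Let $W_0$ denote the solution of the linear nonlocal heat equation $\partial_tW_0=J*W_0-W_0$ starting from $\theta\1_{(-L,L)}$. Since the maximum principle gives $0\leq W_0\leq\theta$, we have $g(W_0)\equiv 0$, so $W_0$ is also a solution of \eqref{super}; as $w(0,\cdot)\geq W_0(0,\cdot)$, a standard comparison argument yields $w\geq W_0$. I then set $v:=w-W_0\geq 0$ and, by direct subtraction,
\begin{equation*}
\partial_tv=J*v-v+g(w),\qquad v(0,x)=\ep\1_{(-L,L)}(x).
\end{equation*}

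The key observation is that $W_0\leq\theta$ gives $(w-\theta)_+=(v+W_0-\theta)_+\leq v_+=v$, so $g(w)\leq r^+v$. Hence $v$ is a subsolution of the linear equation $\partial_tV=J*V-V+r^+V$ with the same initial datum, which is solved as $V(t,x)=e^{r^+t}\tilde V(t,x)$, where $\tilde V$ solves the linear nonlocal heat equation starting from $\ep\1_{(-L,L)}$. Comparison then yields the pointwise bound $w(T,\cdot)\leq W_0(T,\cdot)+e^{r^+T}\tilde V(T,\cdot)$.

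Next, using the representation \eqref{def-K}--\eqref{def-psi} and the fact that $J$ is symmetric and nonincreasing on $(0,+\infty)$ (Assumption \ref{ass:kernel}), each $J^{*(i)}$, and hence $\psi(T,\cdot)$, inherits that property, so $W_0(T,\cdot)$ and $\tilde V(T,\cdot)$ both attain their supremum at $x=0$. A direct computation combining \eqref{1-e^t} with the definition \eqref{def-R-i-L} of $R_i(L)$ then yields
\begin{equation*}
\sup_{x\in\R}w(T,x)\;\leq\;(\theta+\ep e^{r^+T})\Bigl(1-e^{-T}\sum_{i\geq1}\tfrac{T^i}{i!}R_i(L)\Bigr).
\end{equation*}
Setting $A:=e^{-T}\sum_{i\geq 1}\tfrac{T^i}{i!}R_i(L)$, the inequality $(\theta+\ep e^{r^+T})(1-A)\leq\theta$ rearranges to $\theta A\geq\ep e^{r^+T}(1-A)$, which, since $0\leq 1-A\leq 1$, is implied by the hypothesis \eqref{ext-criterion}. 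We thus obtain $\sup_xw(T,x)\leq\theta$. Since the constant function $\theta$ is a stationary solution of \eqref{super}, comparison propagates this bound for all $t\geq T$, so $g(w)$ then vanishes and the dynamics reduce to the linear nonlocal heat equation, whose $L^\infty$-decay (using $w(T,\cdot)\in L^1(\R)$) produces the claimed extinction.

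The main obstacle is identifying this splitting: once $W_0\leq\theta$ is secured, the bound $g(w)\leq r^+v$ converts the nonlinear problem into a linear one, and the precise exponential factor $e^{-(r^++1)T}$ appearing in \eqref{ext-criterion} emerges cleanly from the fundamental-solution formula \eqref{def-psi}. A direct supersolution built from $w$ itself would only give $w\leq e^{r^+T}W$ with $W$ the linear solution from $(\theta+\ep)\1_{(-L,L)}$, which is too coarse; the splitting is essential to save the factor $\theta/(\theta+\ep e^{r^+T})$ hidden in the criterion.
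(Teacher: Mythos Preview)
Your argument is correct and genuinely different from the paper's. The paper constructs a \emph{multiplicative} supersolution $W(t,x)=v(t,x)\varphi(t)$ (following \cite{alfaro2020quantitative}), where $v$ solves the linear nonlocal heat equation from the full datum $(\theta+\ep)\1_{(-L,L)}$, and $\varphi$ is an explicit time-profile chosen so that $W$ is a supersolution on an interval $[0,T)$ with $V(T)\varphi(T)=\theta$. This leads to the condition $\frac{\ep}{\theta}\leq -\int_0^T e^{-r^+s}\frac{A_L'(s)}{A_L^2(s)}\,ds$, which the paper then reduces to \eqref{ext-criterion} via a chain of inequalities together with the monotonicity claim $A_L'\leq 0$ (itself proved through a separate argument invoking \cite{xu2021spatial}).

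Your additive splitting $w\leq W_0+e^{r^+t}\tilde V$ is more elementary and more transparent: the crucial observation $g(w)=r^+(v+W_0-\theta)_+\leq r^+v$ (since $W_0\leq\theta$) linearises the problem in one step, and the criterion \eqref{ext-criterion} drops out directly from $(\theta+\ep e^{r^+T})(1-A)\leq\theta$. You avoid both the ODE for $\varphi$ and the separate verification of $A_L'\leq 0$. The only external fact you use is that convolutions of symmetric nonincreasing densities remain symmetric nonincreasing (Wintner's theorem), which is classical; the paper reaches the same conclusion by invoking the propagation-of-symmetry result of \cite{xu2021spatial} for solutions. Your route even yields the marginally sharper sufficient condition $\theta A\geq \ep e^{r^+T}(1-A)$ before you discard the factor $(1-A)$ to recover \eqref{ext-criterion}. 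As for the final extinction claim, both you and the paper treat it as an immediate consequence of $\sup_x w(T,x)\leq\theta$ without further details.
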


\begin{proof}
    Consider $v=v(t, x)$ the solution to the  linear Cauchy problem
	\begin{equation*}
	\begin{cases}
	\partial _t v=J\ast v-v, & t>0, \, x\in\R,\\
	v(0, x)=w(0, x), & x\in\R.
	\end{cases}
	\end{equation*}  
Recalling the definitions of $K$ and $\psi$ in \eqref{def-K} and \eqref{def-psi}, one gets $v(t, x)=(\theta+\ep)K(t, \cdot)\ast \1_{(-L, L)}(x)$. Let us first prove, through a regularization argument and \cite{xu2021spatial}, that
    \begin{equation}\label{def-V}
    V(t):=\|v(t, \cdot)\|_{L^\infty(\R)}=(\theta+\ep)\int_{|x|< L}K(t, x)dx=(\theta+\ep)\left[\int_{|x|< L}\psi(t, x)dx+e^{-t}\right].
    \end{equation}
Take two sequences $(u_n^-)_{n\geq 0}$, $(u_n^+)_{n\geq 0}$ of continuous, symmetric and nonincreasing on $\R^+$ functions such that, for all $n\geq 0$ and $x\in \R$, 
$$
0\leq u_n^-(x)\leq w(0,x)\leq u_n^+(x)\leq (\theta+\ep)\1_{(-2L, 2L)}(x),
$$
$u_n^-(0)=\theta+\ep$, and $u_n^\pm (x)\to w(0,x)$, as $n\to+\infty$, for almost all $x\in\R$. Denoting  by $v_n^\pm=v_n^\pm(t,x)$ the solutions to $\partial _t v=J*v-v$ starting from $u_n^\pm$, the comparison principle yields
\begin{equation}\label{cp-yields}
0\leq v_n^-(t,x)\leq v(t,x)\leq v_n^+(t,x)\leq \theta+\ep.
\end{equation}
Now, since the initial data $u_n^\pm$ are continuous, symmetric and decreasing on $\R^+$ and since the kernel $J$ is symmetric, Xu et al. \cite[Theorem 2.5]{xu2021spatial} proved that, for each $t>0$ and $n\geq 0$, both functions $x\mapsto v_n^-(t,x)$ and $x\mapsto v_n^+(t,x)$ are also symmetric and nonincreasing on $\R^+$. Hence one obtains, for $t>0$ and $n\geq 0$ , 
	$$
	\|v_n^\pm(t, \cdot)\|_{L^\infty(\R)}=v_n^\pm(t,0)=\int_{\R}\psi(t, x)u_n^\pm(x) dx+e^{-t}u_n^\pm(0).
	$$
From \eqref{cp-yields} we deduce that, for any $t>0$ and $n\geq 0$,
$$
\int_{\R}\psi(t, x)u_n^-(x) dx+e^{-t}u_n^-(0)\leq V(t)\leq 
\int_{\R}\psi(t, x)u_n^+(x) dx+e^{-t}u_n^+(0).
$$	
Recalling that $u_n^\pm(0)=\theta+\ep$, we let $n\to+\infty$ which, using Lebesgue convergence theorem, yields \eqref{def-V}.

    Following  Alfaro et al. \cite{alfaro2020quantitative}, we aim at constructing a super-solution to \eqref{super} in the form  $W(t, x):=v(t, x)\varphi(t)$, with $\varphi(0)=1$ and $\varphi(t)>0$. As in  \cite[Proof of Proposition 2.1]{alfaro2020quantitative}, we choose 
	$$ \varphi(t)=e^{r^+t}\left(1-\int_{0}^{t}r^+e^{-r^+s}\frac{\theta}{V(s)}ds\right). $$
	Observe that $V(0)\varphi(0)>\theta$ and denote by $T>0$ the first time where $V(T)\varphi(T)=\theta$ (obviously we let $T=+\infty$ if such a time does not exist). Then $\left(\varphi(t)-\frac{\theta}{V(t)}\right)_+= \varphi(t)-\frac{\theta}{V(t)}$ for all $t\in [0,T)$, and thus one can check that $W(t,x)=v(t,x)\varphi(t)$ is a super-solution to \eqref{super} on the time interval $(0,T)$. In particular, if $T<+\infty$, it follows from the comparison principle that $w(T,\cdot)\leq W(T,\cdot)\leq \theta$, and we are done. 

    The condition $T<+\infty$ rewrites as: there exists $T>0$ such that $F_L(T)=0$ where
    $$
    F_L(t):=\theta\left(1-\frac{e^{-r^+t}}{A_L(t)}\right)+\ep-\int_{0}^{t}r^+e^{-r^+s}\frac{\theta}{A_L(s)}ds=0,
    $$
	wherein
	$$ 
    A_L(t):=\int_{|x|< L}\psi (t, x)dx+e^{-t}. 
	$$
    We claim (see below for a proof) that $A_L'(t)\leq 0$ for all $t>0$. As a result, since $F_L(0)=\ep$ and $F'_L(t)=\theta e^{-r^+t}\frac{A'_L(t)}{A^2_L(t)}\leq0$, we are left to find $T>0$ such that $F_L(T)\leq 0$, that is
	\begin{equation*}
	1+\frac{\ep}{\theta}\leq r^+\int_{0}^{T}\frac{e^{-r^+s}}{A_L(s)}ds+\frac{e^{-r^+T}}{A_L(T)}.
	\end{equation*}
    Integrating by parts this is equivalent to 
    \begin{equation}\label{to-be-reached}
    \frac{\ep}{\theta}\leq -\int_{0}^{T}e^{-r^+s}\frac{A'_L(s)}{A_L^{2}(s)}ds.
    \end{equation}
    But, since $0<A_L(s)\leq1$ (recall  \eqref{1-e^t}) and $A'_L(s)\leq 0$ for all $s>0$, 
    \begin{eqnarray}
    -\int_{0}^{T}e^{-r^+s}\frac{A'_L(s)}{A_L^{2}(s)}ds
    &\geq & -\int_{0}^{T}e^{-r^+s}A'_L(s)ds\nonumber\\
    &\geq & e^{-r^+T}(1-A_L(T))\nonumber\\
    & = & e^{-r^+T}\left(1-e^{-T}-\int_{|x|< L}\psi (T, x)dx\right)\nonumber\\
    &= & e^{-r^+T}\left(1-e^{-T}-e^{-T}\sum_{i=1}^{+\infty} \frac{T^i}{i!}\int_{|x|< L}J^{*(i)}(x)dx\right)\nonumber\\
    &= & e^{-r^+T}\left(1-e^{-T}-e^{-T}\sum_{i=1}^{+\infty} \frac{T^i}{i!}\left(1-R_i(L)\right)\right)\nonumber\\
    &=& e^{-r^+T}\left(1-e^{-T}-e^{-T}(e^T-1)+e^{-T}\sum_{i=1}^{+\infty} \frac{T^i}{i!}R_i(L)\right)\nonumber\\
    &=& e^{-(r^++1)T}\sum_{i=1}^{+\infty} \frac{T^i}{i!}R_i(L).\nonumber
    \end{eqnarray}
    From this and \eqref{to-be-reached}, we conclude that \eqref{ext-criterion} enforces $T<+\infty$ and is thus a sufficient condition for extinction. 
\end{proof}

\medskip

It remains to prove the claim that $A'_L(t)\leq0$ for all $t>0$.

\medskip

\begin{proof}
	Observe that $A_L(t)=e^{-t}+\int_{\R}\psi(t, x)u_0(x)dx$, where $u_0=\1_{(-L, L)}$, and thus 
	\begin{eqnarray}
    A'_L(t)
	&=&-e^{-t}+\int_{\R}\partial_t\psi(t, x)u_0(x)dx\nonumber\\
	&=&-e^{-t}u_0(0)+\int_{\R}\left(J\ast \psi(t, \cdot)(x)-\psi(t,x)+e^{-t}J(x)\right)u_0(x)dx\nonumber\\
	&=&\int_{\R}J(y)\int_{\R}\left(\psi(t,x-y)u_0(x)-\psi(t, x)u_0(x)\right)dxdy+e^{-t}\int_{\R}J(x)\left(u_0(x)-u_0(0)\right)dx\nonumber\\
	&=&\int_{\R}J(y)\left(\psi(t,\cdot)*u_0(y)-\psi(t, \cdot)*u_0(0)\right)dy+e^{-t}\int_{\R}J(x)\left(u_0(x)-u_0(0)\right)dx\nonumber\\
	&=&\int _{\R} J(x)\left(u(t,x)-u(t,0)\right)dx,\nonumber
	\end{eqnarray}
	where we have used \eqref{partial} and the symmetry of $\psi$ and where $u=u(t,x)$ is the solution of $\partial _t u=J*u-u$ starting from $u_0$.  Now, since  both $J$ and $u_0$ are symmetric and nonincreasing on $\R^+$, it follows from Xu et al. \cite[Theorem 2.5] {xu2021spatial} (and a regularization argument as above) that $u(t,x)\leq u(t,0)$ for all $t>0$, $x\in \R$. Thus we have $A'_L(t)\leq0$ for all $t>0$.
\end{proof}

\medskip

We now take a kernel $J$ satisfying Assumptions \ref{ass:kernel} and \ref{ass:kernel-bis} with, in particular, the expansion \eqref{J-Fourier-ass} for some $0<\be\leq 2$. We consider the above extinction criterion in the limit $\ep \to 0$, revealing the role of the dispersal kernel.  Notice that the results below connect to the {\it control issue}: given a \lq\lq final time'' $T>0$, how large can we choose the size of the initial data so that the solution is everywhere smaller than the threshold $\theta$ at time $T$?

We start with the case $0<\be< 2$. Our analysis seems to reveal that the \lq\lq final time'' does not affect the order of magnitude  $L\sim  \ep^{-1/\be}$ but plays a role in the value of the \lq\lq multiplicative constant''.

\begin{corollary}[Asymptotic extinction criterion, $0<\be< 2$]\label{asmptotic}
Let Assumptions \ref{ass:kernel} and \ref{ass:kernel-bis} be satisfied with $0<\be< 2$. Let $T>0$ be a fixed time. Then, for any $0<\ga<1$, there exists $\ep_0>0$ small enough such that, for each $\ep\in(0, \ep_0)$ and for each 
  $$
  0<L<\ga \left(\theta aC Te^{-r^+T}\right)^{1/\be}\ep ^{-1/\be},
  $$
the solution $w=w(t, x)$ to \eqref{super} is everywhere smaller than $\theta$ at time $T$ and is thus going to extinction at large times. Here constants $a>0$ and $C=C(\beta)>0$ come from the asymptotic expansion \eqref{J-Fourier-ass} and Lemma \ref{lem:tails}-(ii), respectively.
\end{corollary}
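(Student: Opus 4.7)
The plan is to apply Proposition \ref{prop:extinction} directly by verifying the extinction criterion \eqref{ext-criterion} for the prescribed range of $L$. First I would note that $L\mapsto R_i(L)$ is nonincreasing for each $i\geq 1$, hence so is the left-hand side of \eqref{ext-criterion} as a function of $L$; therefore it is enough to verify the inequality at the endpoint $L_1(\ep):=\gamma(\theta aCTe^{-r^+T})^{1/\beta}\ep^{-1/\beta}$, since then it automatically persists for every smaller $L$. Because $L_1(\ep)\to+\infty$ as $\ep\to 0^+$, the large-$L$ asymptotics of Lemma \ref{lem:tails}-(ii) become available along the parameter $L=L_1(\ep)$.

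The heart of the argument is a sharp lower bound on the series in \eqref{ext-criterion}. Given $\eta\in(0,1)$ to be fixed later, I would use the identity $\sum_{i\geq 1} iT^i/i!=Te^T$ to choose $N=N(\eta,T)$ large enough that $\sum_{i=1}^{N} iT^i/i!\geq(1-\eta)Te^T$. Applying Lemma \ref{lem:tails}-(ii) separately for each $i\in\{1,\dots,N\}$ then yields some $L_*=L_*(\eta,N)$ such that $R_i(L)\geq (1-\eta)iaC/L^\beta$ whenever $L\geq L_*$ and $1\leq i\leq N$. Truncating the series to the first $N$ terms and combining these facts gives
$$
\sum_{i=1}^{\infty}\frac{T^i}{i!}R_i(L)\;\geq\;\sum_{i=1}^{N}\frac{T^i}{i!}\,(1-\eta)\frac{iaC}{L^\beta}\;\geq\;(1-\eta)^2\,\frac{aCTe^T}{L^\beta},\quad\text{for } L\geq L_*.
$$
The clean algebraic cancellation $e^{-(r^++1)T}\cdot e^T=e^{-r^+T}$ is precisely what produces the constant appearing in the statement of the corollary.

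Plugging this bound into \eqref{ext-criterion} at $L=L_1(\ep)$ gives, as soon as $\ep$ is small enough that $L_1(\ep)\geq L_*$,
$$
\theta e^{-(r^++1)T}\sum_{i=1}^{\infty}\frac{T^i}{i!}R_i(L_1(\ep))\;\geq\;(1-\eta)^2\frac{\theta aCTe^{-r^+T}}{L_1(\ep)^\beta}\;=\;\frac{(1-\eta)^2}{\gamma^\beta}\,\ep.
$$
Since $\gamma<1$ one is free to fix $\eta$ so small that $(1-\eta)^2\geq\gamma^\beta$, which makes \eqref{ext-criterion} hold at $L_1(\ep)$ and hence at every $L\in(0,L_1(\ep))$ by the monotonicity observed above. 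Proposition \ref{prop:extinction} then delivers $w(T,\cdot)\leq\theta$ everywhere and the desired extinction at large times.

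I expect the only real subtlety to lie in getting the constant exactly right. Keeping only the $i=1$ term would replace $Te^T$ by $T$ and yield the suboptimal constant $\theta aCTe^{-(r^++1)T}$ inside the parentheses, forcing $\gamma<e^{-T/\beta}$; summing the full series is indispensable to capture the factor $e^T$ that cancels one of the two exponentials in the prefactor. Importantly, no uniformity in $i$ of the equivalent $R_i(L)\sim iaC/L^\beta$ is needed, because the tail of the series beyond the fixed index $N$ is simply discarded as a nonnegative remainder, so all bookkeeping reduces to a finite block of indices.
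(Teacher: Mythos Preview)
Your proof is correct and follows essentially the same approach as the paper's: truncate the series at a large fixed $N$, use Lemma \ref{lem:tails}-(ii) on the finitely many indices $1\leq i\leq N$ to get $R_i(L)\gtrsim iaC/L^\beta$ for $L$ large, and exploit $\sum_{i\geq 1} iT^i/i!=Te^T$ to recover the factor $e^T$ that cancels against $e^{-(r^++1)T}$. The only cosmetic differences are that the paper uses $\gamma^{\beta/2}$ directly as the loss factor in each of the two approximations (where you introduce an auxiliary $\eta$ with $(1-\eta)^2\geq\gamma^\beta$), and that the paper invokes the comparison principle for smaller $L$ where you use the monotonicity of the criterion itself.
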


\begin{proof}  
	For a given $0<\ga<1$, we select an integer $N$ large enough so that 
	$$
	\sum_{i=0}^N \frac{T^{i}}{i!}\geq \ga ^{\be/2}e^T. 
	$$
	Next, from \eqref{tails-beta-lower-2}, there is $L_0>0$ large enough such that, for all $L\geq L_0$ and $1\leq i \leq N+1$, 
	$$
	R_i(L)\geq \ga ^{\be/2} i\, \frac{aC}{L^\be}. 
	$$
	Hence, if $L\geq L_0$ then
    $$
    \theta e^{-(r^++1)T}\sum_{i=1}^{+\infty} \frac{T^i}{i!}R_i(L)\geq \theta e^{-(r^++1)T}\sum_{i=1}^{N+1}\frac{T^i}{i!}\ga^{\be/2} i\,\frac{aC}{L^\be}\geq \ga ^{\be} \theta T e^{-r^+T}\frac{aC}{L^\be}.
    $$
    As a result, to check the extinction criterion \eqref{ext-criterion} it is enough to have 
    $$
    L\geq L_0 \quad \text{ and }\quad\ep \leq \ga ^{\be} \theta T e^{-r^+T}\frac{aC}{L^\be}.
    $$
    Hence, there is $\ep _0>0$ such that, for all $\ep \in (0,\ep_0)$, the conclusion of the corollary holds true for $L=\ga \left(\theta aC Te^{-r^+T}\right)^{1/\be}\ep ^{-1/\be}$, and also for smaller $L$ from the comparison principle.
\end{proof}

\medskip

On the other hand, the case $\be=2$ is more tricky as revealed by the following corollaries.

\begin{corollary}[Asymptotic extinction criterion for exponentially bounded kernels]\label{lighttail}
	Let Assumption \ref{ass:kernel} be satisfied and assume further the exponential decay \eqref{expo-decay}. Then there exists $\ep_0>0$ small enough such that, for each $\ep\in(0, \ep_0)$ and for each $L$  satisfying  
	$$
	0<L<C^-\ln\frac{1}{\ep},
	$$
	with $C^-=C^-(r^+,J)>0$,  the solution $w=w(t, x)$ to \eqref{super} is everywhere smaller than $\theta$ at time $T=\tilde C^{-}\ln \frac 1 \ep$, $\tilde C^-=\tilde C^-(r^+,J)>0$, and is thus going to extinction at large times.
\end{corollary}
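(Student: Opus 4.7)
The plan is to verify the sufficient extinction condition \eqref{ext-criterion} of Proposition \ref{prop:extinction} for $T = \tilde C^- \ln\frac{1}{\ep}$ and $L = C^- \ln\frac{1}{\ep}$; since the left-hand side of \eqref{ext-criterion} is nonincreasing in $L$, treating this extremal case will cover all smaller $L>0$.

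First I would bound the series in \eqref{ext-criterion} below by a single well-chosen term. Pick any $\bar L \in (0, L_1)$ with $L_1$ as in Lemma \ref{lem:expo}, and set $i_\ep := \lfloor L/\bar L \rfloor$, so that $i_\ep \sim \frac{C^-}{\bar L}\ln\frac{1}{\ep}$ and $L/i_\ep \to \bar L$ as $\ep \to 0^+$. Using the monotonicity of $\La^*$ together with the uniform Cram\'er lower bound contained in \eqref{cramer}, for any preassigned $\eta > 0$ one obtains
\begin{equation*}
R_{i_\ep}(L) \geq e^{-i_\ep(\La^*(\bar L) + \eta)} \quad \text{for } \ep \text{ small enough.}
\end{equation*}
Combining this with Stirling's formula $\frac{T^{i_\ep}}{i_\ep!} \geq \exp\bigl(i_\ep[\ln(T/i_\ep) + 1] - O(\ln i_\ep)\bigr)$ and with $T/i_\ep \to \tilde C^- \bar L / C^-$, the logarithm of the left-hand side of \eqref{ext-criterion} is at least, up to an $O(\ln\ln\frac{1}{\ep})$ remainder,
\begin{equation*}
\ln\frac{1}{\ep} \cdot \left[ -(r^+ + 1)\tilde C^- + \frac{C^-}{\bar L}\left( \ln\frac{\tilde C^- \bar L}{C^-} + 1 - \La^*(\bar L) - \eta \right) \right].
\end{equation*}

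To force this lower bound to exceed $\ln\ep = -\ln\frac{1}{\ep}$, the plan is to fix $\tilde C^- := 1/(2(r^+ + 1))$ so that the first summand inside the bracket equals $-1/2$, and then choose $C^-$ sufficiently small (depending on $r^+$ and $J$) so that the second summand has absolute value at most $1/4$; this is legitimate because $\bar L$ and $\La^*(\bar L)$ are fixed constants while $\frac{C^-}{\bar L}\bigl|\ln(\tilde C^- \bar L / C^-) + 1 - \La^*(\bar L)\bigr| \to 0$ as $C^- \to 0^+$. The resulting bracket exceeds $-3/4 > -1$, so \eqref{ext-criterion} is satisfied for every $\ep$ sufficiently small, and extinction at large times follows from Proposition \ref{prop:extinction}.

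The main subtlety I anticipate is the careful application of the Cram\'er lower bound: \eqref{cramer} is stated for the precise form $R_i(iL)$, whereas here $L = i_\ep L_\ep$ with $L_\ep := L/i_\ep$ merely converging to $\bar L$ rather than being exactly $\bar L$. This is handled by choosing an auxiliary pivot $\bar L' \in (\bar L, L_1)$, noting that $L_\ep \in [\bar L, \bar L']$ once $\ep$ is small enough, and invoking \eqref{cramer} uniformly on $[\bar L, \bar L']$ together with the monotonicity of $\La^*$; the resulting discrepancy between $\La^*(L_\ep)$ and $\La^*(\bar L)$ is absorbed into $\eta$.
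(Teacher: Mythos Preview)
Your proposal is correct and follows essentially the same route as the paper: both arguments keep a single term $i=i(\ep)$ of the series in \eqref{ext-criterion}, apply Stirling's formula together with the uniform Cram\'er lower bound from Lemma \ref{lem:expo}, and then choose the constants small enough so that the resulting coefficient of $\ln\frac{1}{\ep}$ exceeds $-1$. The only cosmetic difference is that the paper takes $T=i=[C\ln\frac{1}{\ep}]$ (so $T/i=1$), whereas you decouple $T$ and $i_\ep$ with two separate constants $\tilde C^-$ and $C^-/\bar L$; this produces the extra $\ln(T/i_\ep)$ term in your Stirling expansion but leads to the same conclusion.
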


\begin{proof} We shall use the large deviations Cram\'er theorem, as stated in  Lemma \ref{lem:expo}. We use the criterion \eqref{ext-criterion} with a single  \lq\lq large'' term $i=i(\ep)$: to prove extinction  it is sufficient to obtain $
\theta e^{-(r^++1)T}\frac{T^i}{i!} R_i(L) \ge \ep$ or, equivalently, 
\begin{equation}\label{exp-super}
-(r^++1)\frac T i+\ln T-\frac 1 i \ln (i!)+\frac 1 i \ln R_{i}(L)\geq \frac 1 i (\ln\epsilon-\ln\theta).
\end{equation}
Recalling that $L_1>0$ is defined in Lemma \ref{lem:expo} let us fix $0<L_0<s_0<L_1$. Then since $\La^*$ is nondecreasing on $(0,+\infty)$, one has
 $0\leq \Lambda^*(L_0)\leq \La^*(s_0)\leq \Lambda^*(L_1)<+\infty$, while according to Lemma \ref{lem:expo} the convergence in \eqref{cramer} holds uniformly in any compact subset of $(0,L_1]$. 
 We now choose 
$$L=Cs_0\ln\frac{1}{\epsilon}, \quad T=i=[C\ln \frac 1 \ep],
$$
where $C>0$ is some constant to be determined so that \eqref{exp-super} is satisfied for all $\ep$ small enough.

From Stirling's formula, we see that there is $K>0$ such that $\frac 1 i \ln (i!)\leq \ln i +K$ for all $i\geq 1$. Hence the condition \eqref{exp-super} is reached as soon as
\begin{equation*}
-(r^++1)-K+\frac 1 i \ln R_{i}(L)\geq \frac 1 i(\ln\epsilon-\ln\theta).
\end{equation*}
As $\ep\to 0$, the right hand side tends to $-1/C$. On the other hand, we write
\begin{equation*}
\frac 1 i\ln R_{i}(L)=\left(\frac 1 i \ln R_i\left(i\frac L i\right)+\Lambda^*\left(\frac L i\right)\right)-\La^*\left(\frac{L}{i}\right).
\end{equation*}
Since $s_0\leq \frac L i\leq \frac{Cs_0\ln \frac 1 \ep}{C\ln \frac 1 \ep -1}$,
the first term in the right-hand converges to $0$ as $\ep\to 0$ due to the uniformity of the convergence in \eqref{cramer}, while the monotonicity of $\La^*$ yields, for all $\ep$ small enough,
$$
-\La^*\left(\frac{L}{i}\right)\geq -\La^*(L_1).
$$ 
As a consequence we get
\begin{equation*}
\liminf_{\ep\to 0}\frac 1 i\ln R_{i}(L)\geq -\La^*(L_1).
\end{equation*}
As a result it suffices to have $-(r^++1)-K-\Lambda^*(L_1)\geq -1/C$, which is achieved by taking $C=C(r^+,s_0)=C(r^+,J)>0$ small enough. The extinction for smaller $L$ follows from the comparison principle.
\end{proof}

\begin{corollary}[Asymptotic extinction criterion, some examples with $\be=2$]\label{remarkbeta=2}
	Let Assumption \ref{ass:kernel} be satisfied.  Let $T>0$ be a fixed time. 
	\begin{itemize}
	\item [(i)] Assume that there are $c_2>c_1>0$, $\alpha>2$ and $x_0\in \R$ such that $ \frac{c_1}{\vert x \vert^{1+\alpha}}\leq J(x) \leq  \frac{c_2}{\vert x \vert^{1+\alpha}}$ for all $\vert x\vert \geq \vert x_0\vert $. Then there is $C^-=C^-(\theta,r^+,T,c_1,\alpha)>0$ such that, for each $\ep>0$  and each $L$ satisfying
	$$
	0<L<C^-\frac 1 {\ep^{\frac{1}{\al}}},
	$$
    the solution $w=w(t, x)$ to \eqref{super} is everywhere smaller than $\theta$ at time $T$ and is thus going to extinction at large times.
    
    \item[(ii)]	 Assume that $J$ has regularly varying tails $RV(\al)$ with $\alpha>2$. Let us fix $\alpha'>\alpha$. Then there exists $\ep_0>0$ small enough such that, for each $\ep\in(0, \ep_0)$ and for each $L$ satisfying  
	$$
	0<L<\frac 1 {\ep^{\frac{1}{\al'}}} ,
	$$
    the solution $w=w(t, x)$ to \eqref{super} is everywhere smaller than $\theta$ at time $T$ and is thus going to extinction at large times.	
    
    \item [(iii)]  Assume that $J$ has lognormal-type tails $LN(\gamma,\lambda,\rho)$ with $\gamma>1$, $\lambda>0$ and $\rho \in \R$. Let us fix $\la '>\la$. Then there exists $\ep_0>0$ small enough such that, for each $\ep\in(0, \ep_0)$ and for each $L$ satisfying 
	$$
    0<L< e^{\left(\frac 1 {\la '} \ln\frac 1 \ep\right)^{\frac 1 \ga}},
	$$
    the solution $w=w(t, x)$ to \eqref{super} is everywhere smaller than $\theta$ at time $T$ and is thus going to extinction at large times. 	  
     
    \item [(iv)]  Assume that $J$ has  Weibull-like tails $WE(\alpha,\lambda,\rho)$ with $0<\al <1$, $\lambda>0$ and $\rho \in \R$. Let us fix $\la '>\la$. Then there exists $\ep_0>0$ small enough such that, for each $\ep\in(0, \ep_0)$ and for each $L$ satisfying  
    \begin{equation}\label{bidule}
    0< L < \left(\frac 1 {\la '} \ln \frac 1 \ep\right)^{\frac 1 \al},
    \end{equation}
    the solution $w=w(t, x)$ to \eqref{super} is everywhere smaller than $\theta$ at time $T$ and is thus going to extinction at large times.	   
\end{itemize}
\end{corollary}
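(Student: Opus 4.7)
The plan is to apply the criterion \eqref{ext-criterion} of Proposition \ref{prop:extinction} and to retain only its $i=1$ term: since $T>0$ is fixed, the higher order contributions in the series $\sum T^i/i!\,R_i(L)$ do not improve the asymptotic behavior in $\ep$. The task therefore reduces to verifying the single sufficient inequality
$$
R_1(L)\ \geq\ \frac{\ep\,e^{(r^++1)T}}{\theta\,T}.
$$
Since $L\mapsto R_i(L)$ is nonincreasing (tail mass of a probability density), the left-hand side of \eqref{ext-criterion} is itself nonincreasing in $L$, so it is enough to establish the previous inequality at the upper endpoint of the stated range of $L$; the conclusion then extends to every smaller $L$ by monotonicity.

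For case (i), integrating the pointwise lower bound $J(x)\geq c_1/\vert x\vert^{1+\al}$ over $\{\vert x\vert \geq L\}$ yields, for every $L\geq \vert x_0\vert$, the estimate $R_1(L)\geq 2c_1/(\al L^\al)$. Substituting in the $i=1$ sufficient condition reduces it to $\ep L^\al \leq 2c_1\,\theta\,T\,e^{-(r^++1)T}/\al$, which provides exactly the stated constant $C^-=(2c_1\,\theta\,T\,e^{-(r^++1)T}/\al)^{1/\al}$. The remaining case $L<\vert x_0\vert$ is handled by the monotonicity of $R_1$, since then $R_1(L)\geq R_1(\vert x_0\vert)$, and the required inequality follows as long as $\ep$ is below a fixed constant depending only on $|x_0|$ and the data.

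For the asymptotic cases (ii)--(iv) only a sharp equivalent of $R_1(L)$ is available (respectively $L^{-\al}S(L)$ with $S$ slowly varying, $cL^\rho e^{-\la\ln^\ga L}$, or $cL^\rho e^{-\la L^\al}$), so the stated thresholds must absorb a subexponential prefactor. For each case I introduce an auxiliary exponent strictly between the given one and the relaxed one. In case (ii), Potter's bounds on slowly varying functions give, for any $\al''\in(\al,\al')$, the lower bound $R_1(L)\geq L^{-\al''}$ for $L$ large; combined with $L<\ep^{-1/\al'}$ this forces $R_1(L)\geq \ep^{\al''/\al'}$, which exceeds $\ep\,e^{(r^++1)T}/(\theta T)$ once $\ep$ is small since $\al''/\al'<1$. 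Cases (iii) and (iv) follow the same template with $\la''\in(\la,\la')$: from \eqref{tail-LN} (resp. \eqref{tail-WE}) one deduces $R_1(L)\geq e^{-\la''\ln^\ga L}$ (resp. $R_1(L)\geq e^{-\la'' L^\al}$) for $L$ large, the polynomial prefactor $cL^\rho$ being absorbed by the slack $\la''<\la'$. Inverting the resulting inequality at the upper endpoint of the admissible range of $L$ completes the argument, again for $\ep$ small enough.

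The main obstacle is the accounting of the prefactors in cases (ii)--(iv): the tail asymptotic determines $R_1(L)$ only up to a factor tending to $1$ and a polynomial correction $L^\rho$ (or a slowly varying function in case (ii)), so a clean closed-form bound on $L$ with the stated exponent is available only at the price of the slack $\al'>\al$ or $\la'>\la$. In case (i) no such slack is needed, because the pointwise algebraic bound on $J$ is uniform on $\{\vert x\vert\geq \vert x_0\vert\}$ and one thus obtains an explicit constant $C^-$.
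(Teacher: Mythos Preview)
Your proposal is correct and follows essentially the same route as the paper: both arguments retain only the $i=1$ term of criterion \eqref{ext-criterion}, reduce to the inequality $\theta e^{-(r^++1)T}T\,R_1(L)\geq \ep$, and then invert the known tail asymptotics of $R_1$ in each of the four cases. The only cosmetic differences are that the paper substitutes the upper endpoint directly and absorbs the prefactors using the slack $\al'-\al$ (resp.\ $\la'-\la$) rather than introducing an intermediate exponent, and it extends to smaller $L$ via the comparison principle on the PDE rather than via monotonicity of $R_1$.
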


\begin{proof} We use the criterion \eqref{ext-criterion} with only the term $i=1$: to prove extinction  it is sufficient to obtain $\theta e^{-(r^++1)T}T R_1(L)\geq  \ep$. 

In case $(i)$ since $R_1(L)\geq \frac{2c_1}{\alpha}L^{-\alpha}$ the conclusion easily follows.

In case $(ii)$, from \eqref{tail-RV} the condition is recast
$$
CS(L)L^{-\alpha}\geq \ep, \quad C:=\theta e^{-(r^++1)T}T,
$$
where $S$ is a slowly varying function. Letting $L=\frac 1 {\ep^{\frac{1}{\al'}}}$, this is recast 
$$
CL^{\alpha'-\alpha}S(L)\geq 1,
$$
which is true for $L\gg 1$ from known properties of slowly varying functions, see e.g. \cite[VIII, Lemma 2]{Feller}. As a result, extinction does occur for $0<\ep<\ep _0$ with $\ep_0>0$ small enough and $L=\frac 1 {\ep^{\frac{1}{\al'}}}$. The extinction for smaller $L$ follows from the comparison principle. 
	
In case $(iii)$, from \eqref{tail-LN} the condition is recast
$$
C L^\rho e^{-\lambda \ln ^\gamma L}\geq \ep, \quad C:=\theta e^{-(r^++1)T}T c.
$$
Letting $L=e^{\left(\frac 1 {\la '} \ln\frac 1 \ep\right)^{\frac 1 \ga}}$, this is recast 
$$
CL^{\rho}e^{(\la '-\la) \ln ^\gamma L}\geq 1,
$$
which is true for $L\gg 1$. As a result, extinction does occur for $0<\ep<\ep _0$ with $\ep_0>0$ small enough and $L=e^{\left(\frac 1 {\la '} \ln\frac 1 \ep\right)^{\frac 1 \ga}}$. The extinction for smaller $L$ follows from the comparison principle. 

In case $(iv)$, we recast the condition thanks to \eqref{tail-WE} and use a similar argument.
\end{proof}

\begin{remark}\label{rem:as-easily}  As easily seen from the proof,  the estimate in case $(ii)$ can be slightly improved (namely $\alpha '$ can be taken equal to $\alpha$) when the slowly varying function $S$ satisfies $S(L)\to +\infty$ as $L\to +\infty$. Similarly the estimates in $(iii)$ and $(iv)$ can be slightly improved (namely $\la '$ can be taken equal to $\la$) when $\rho >0$. Last, in $(iv)$, when $\rho=0$, \eqref{bidule} can be replaced by
\begin{equation}\label{bidule2}
0<L<\left(\frac{1}{\lambda}\ln \frac C \ep\right)^{\frac 1 \alpha},\text{ where }C:=\theta e^{-(r^++1)T}Tc.
\end{equation}
\end{remark}

\section{Non-extinction criterion}\label{non-extinction}

In this section, we fix $r^->0$, $\theta\in(0, 1)$, and define the linear function 
\begin{equation}\label{g}
\tilde{g}(w):=r^-(w-\theta).
\end{equation} 
For $\ep>0$ and $L>0$, we consider $w_L^\ep=w_L^\ep(t, x)$ the solution of the Cauchy problem
\begin{equation}\label{sub}
\begin{cases}
\partial _t w=J\ast w-w+\tilde{g}(w), &  t>0,\, x\in\R,\\
w(0, x)=(\theta+\ep)\1_{(-L, L)}(x), &  x\in\R.\end{cases}
\end{equation}
We start with a criterion for non-extinction which does not require $\ep$ to be small.

\begin{proposition}[Non-extinction criterion]\label{nonextinction}
Let Assumption \ref{ass:kernel}-(i) be satisfied. Let $\ep>0$ and $L>0$ be given. Let $\eta\in(\theta, 1)$ and $m\in (0,1)$ be given.  Define
\begin{equation}\label{def:Tep}
T_\ep:=\frac{1}{r^-}\ln \frac{2(\eta-\theta)}{\ep}.
\end{equation}

\begin{itemize}
\item [(i)] For all $0<t\leq T_\ep$, all $x\in \R$, $w_L^\ep(t, x)\leq \theta+2(\eta-\theta)$.

\item [(ii)] If 
\begin{equation}\label{nonextinctioncondition}
\frac{\ep}{2(\theta+\ep)}\geq e^{-T_\ep }\sum_{i=1}^{+\infty} \frac{T_\ep^i}{i!}R_i\left((1-m)L)\right),
\end{equation}
then
\begin{equation}\label{conclusion}
\min_{|x|\leq mL} w_L^\ep(T_\ep, x)\geq \eta.
\end{equation}
\end{itemize}
\end{proposition}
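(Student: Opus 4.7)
The plan is to exploit the fact that the equation \eqref{sub} is \emph{linear} (since $\tilde g(w)=r^-(w-\theta)$ is affine), so it can be reduced to the linear nonlocal heat equation by a change of unknown and then solved explicitly via the fundamental solution $K$ of \eqref{def-K}. First I would set $v(t,x):=e^{-r^-t}(w_L^\ep(t,x)-\theta)$. Using $\int_\R J=1$, a direct computation gives $\partial_t v=J\ast v-v$ with initial data $v(0,\cdot)=(\theta+\ep)\1_{(-L,L)}-\theta$. Since $K(t,\cdot)$ is a probability measure, convolving with the constant $\theta$ returns $\theta$, so by linearity
\begin{equation*}
w_L^\ep(t,x)=\theta+e^{r^-t}\bigl(\tilde w_1(t,x)-\theta\bigr),\quad\text{where}\quad \tilde w_1(t,x):=(\theta+\ep)\,K(t,\cdot)\ast\1_{(-L,L)}(x).
\end{equation*}
This identity is the heart of the proof.

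For part $(i)$, the maximum principle for the linear nonlocal heat equation yields $\tilde w_1(t,x)\leq\theta+\ep$, hence $w_L^\ep(t,x)-\theta\leq e^{r^-t}\ep$. Inserting the definition \eqref{def:Tep} of $T_\ep$, one has $e^{r^-T_\ep}\ep=2(\eta-\theta)$, which gives the pointwise bound $w_L^\ep(t,x)\leq \theta+2(\eta-\theta)$ for all $t\in[0,T_\ep]$.

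For part $(ii)$, I would turn the desired inequality $w_L^\ep(T_\ep,x)\geq \eta$ into $\tilde w_1(T_\ep,x)\geq \theta+\ep/2$ using the identity above (exactly because $e^{r^-T_\ep}=2(\eta-\theta)/\ep$). Writing explicitly
\begin{equation*}
\tilde w_1(T_\ep,x)=(\theta+\ep)\int_{x-L}^{x+L}K(T_\ep,y)\,dy,
\end{equation*}
and using that for $|x|\leq mL$ the interval $(x-L,x+L)$ contains $(-(1-m)L,(1-m)L)$, I would lower-bound this by $(\theta+\ep)\int_{|y|<(1-m)L}K(T_\ep,y)\,dy$. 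Splitting $K$ into its Dirac part and $\psi$, and using \eqref{1-e^t} together with the definition \eqref{def-R-i-L} of $R_i$ and the series \eqref{def-psi}, the integral collapses to
\begin{equation*}
\int_{|y|<(1-m)L}K(T_\ep,y)\,dy=1-e^{-T_\ep}\sum_{i=1}^{+\infty}\frac{T_\ep^i}{i!}R_i\bigl((1-m)L\bigr).
\end{equation*}
Plugging this back, the required inequality $\tilde w_1(T_\ep,x)\geq \theta+\ep/2$ is algebraically equivalent to the assumption \eqref{nonextinctioncondition}, which closes the argument.

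No step is really difficult: the only slightly delicate point is keeping track of the Dirac contribution to $K$ when restricting the integral, and making sure the algebra between the threshold $\eta$, the time $T_\ep$, and the factor $(\theta+\ep)$ in the initial data lines up precisely with the stated condition \eqref{nonextinctioncondition}.
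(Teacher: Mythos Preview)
Your proof is correct and follows essentially the same approach as the paper: both exploit the affine structure of $\tilde g$ to reduce to the linear nonlocal heat equation via the change of unknown $w_L^\ep=\theta+e^{r^-t}(v-\theta)$, then use the explicit representation through $K=e^{-t}\delta_0+\psi$ together with the mass identity \eqref{1-e^t}. The only cosmetic difference is that you lower-bound the central mass directly via the interval inclusion $(x-L,x+L)\supset\bigl(-(1-m)L,(1-m)L\bigr)$, whereas the paper equivalently upper-bounds the complementary tail integral $\int_{|y|\geq L}\psi(t,x-y)\,dy$ by $\int_{|z|\geq (1-m)L}\psi(t,z)\,dz$ through a change of variable.
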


\begin{proof}
	Notice that the function $w^\ep_L(t, x)$ is given by 
	\begin{equation}\label{w^ep_L}
	w^\ep_L(t, x)=\theta+e^{r^-t}(v(t, x)-\theta), 
	\end{equation}
	where $v=v(t, x)$ denotes the solution of the linear equation
	$$ 
	\partial _t v =J\ast v-v, \; t>0, \, x\in\R, 
	$$
starting from $w^\ep_L(0, x)=(\theta+\ep)\1_{(-L, L)}(x)$. From the comparison principle $v(t,x)\leq \theta +\ep$ for all $t>0$ and $x\in \R$, and thus, for all $0<t\leq T_\ep$,
	$$
	w^\ep(t,x)\leq \theta+\frac{2(\eta-\theta)}{\ep}(\theta+\ep-\theta)=\theta+2(\eta-\theta),\; \forall x\in\R,
	$$ 
	which proves $(i)$.

	Next, we know from subsection \ref{ss:linear} that $v=v(t,x)$ is given by
	$$
	v(t,x)=(\theta+\ep) \left[e^{-t} \1_{(-L, L)}(x)+\int_{-L}^L\psi(t,x-y)dy\right].
	$$	
	From \eqref{1-e^t} we obtain	
	$$
	\frac {v(t,x)}{\theta+\ep}=1+e^{-t} \left(\1_{(-L, L)}(x)-1\right)-\int_{|y|\geq L}\psi(t,x-y)dy,
	$$
and thus
$$
	w^\ep_L(t,x)=\theta+(\theta+\ep)e^{r^-t}\left[e^{-t} \left(\1_{(-L, L)}(x)-1\right)-\int_{|y|\geq L}\psi(t,x-y)dy+\frac{\ep}{\theta+\ep}\right].
$$	
We now restrict to $x$ satisfying $|x|\leq mL$.  Since $|y|\geq L$ ensures that $|x|\leq mL\leq m|y|$ and $|x-y|\geq(1-m)|y|$, we deduce that
	\begin{eqnarray}
	w^\ep_L(t, x)
	&\geq& \theta+(\theta+\ep)e^{r^-t}\left[\frac{\ep}{\theta+\ep}-\int_{|z|\geq (1-m)L}\psi(t, z)dz\right]\nonumber\\
	&=& \theta+(\theta+\ep)e^{r^-t}\left[\frac{\ep}{\theta+\ep}-e^{-t}\sum_{i=1}^{+\infty} \frac{t^i}{i!}\int_{\vert z\vert \geq (1-m)L}J^{*(i)}(z)dz\right].\nonumber
	\end{eqnarray}
It follows from this and \eqref{nonextinctioncondition} that
$$
	\min_{|x|\leq mL}w^\ep_L(T_\ep, x)\geq \theta+(\theta+\ep)\frac{2(\eta-\theta)}{\ep}\left[\frac{\ep}{\theta+\ep}-\frac{\ep}{2(\theta+\ep)}\right]=\eta,
$$
which concludes the proof of $(ii)$.
\end{proof}

\begin{remark}\label{Rem1}
Note that the right hand side of \eqref{nonextinctioncondition} is decreasing with respect to $L>0$ from $1-e^{-T_\ep}$ when $L=0$ to $0$ when $L\to +\infty$. 
\end{remark}

We now take a kernel $J$ satisfying Assumptions \ref{ass:kernel}-$(i)$ and \ref{ass:kernel-bis} with, in particular, the expansion \eqref{J-Fourier-ass} for some $0<\be\leq 2$. We consider the above non-extinction criterion in the limit $\ep \to 0$, revealing the role of the dispersal kernel. 

\begin{corollary}[Asymptotic non-extinction criterion, $0<\be\le2$]\label{prop:0<beta<2}
Let Assumptions \ref{ass:kernel}-(i) and \ref{ass:kernel-bis} with $0<\be\le2$ be satisfied. Let $\eta\in(\theta, 1)$ and $m\in(0, 1)$ be given. Then for all $\ep>0$, there is $L_\ep>0$ such that the conclusion \eqref{conclusion} holds for all $L\geq L_\ep$. Furthermore, there is $\ep_0>0$ small enough such that, for all $0<\ep<\ep_0$, $L_\ep$ can be chosen as follows,
$$
L_\ep=\frac{C^+}{1-m}\left(\frac{1}{\ep}\ln\frac{1}{\ep}\right)^{\frac1\be},
$$
for some constant $C^+=C^+(\theta, r^-, J)>0$.
\end{corollary}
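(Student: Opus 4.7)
The plan is to verify the non-extinction criterion \eqref{nonextinctioncondition} of Proposition \ref{nonextinction} for $L$ of the prescribed size. With $L':=(1-m)L$, this amounts to showing that
$$
e^{-T_\ep}\sum_{i=1}^{+\infty}\frac{T_\ep^i}{i!}R_i(L')\leq \frac{\ep}{2(\theta+\ep)},
$$
where $T_\ep=\tfrac{1}{r^-}\ln\tfrac{2(\eta-\theta)}{\ep}\sim \tfrac{1}{r^-}\ln\tfrac{1}{\ep}$ as $\ep\to 0^+$. Once this is achieved on the above asymptotic scale, the existence of some (possibly larger) $L_\ep$ valid for \emph{all} $\ep>0$ will follow immediately from Remark \ref{Rem1}, which states that the right-hand side of \eqref{nonextinctioncondition} decreases from $1-e^{-T_\ep}$ to $0$ as $L$ increases.

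The key estimate is a uniform-in-$i$ algebraic upper bound on the tail $R_i(L')$. I would start from Lemma \ref{lem:Durret}, combined with the elementary inequality $0\leq 1-\widehat J(\xi)^i\leq i\bigl(1-\widehat J(\xi)\bigr)$ valid for $\widehat J(\xi)\in [-1,1]$ (which holds since $J$ is a probability density and is even). This yields
$$
R_i(L')\leq \frac{iL'}{2}\int_{-2/L'}^{2/L'}\bigl(1-\widehat J(\xi)\bigr)d\xi.
$$
Now I invoke Assumption \ref{ass:kernel-bis}: for any fixed $\kappa>a$ there exists $\xi_0>0$ such that $1-\widehat J(\xi)\leq \kappa|\xi|^\beta$ for $|\xi|\leq\xi_0$. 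Hence, whenever $L'\geq 2/\xi_0$, a direct integration produces
$$
R_i(L')\leq \frac{C_\beta\, i}{(L')^\beta},\quad C_\beta:=\frac{2^{\beta+1}\kappa}{\beta+1},
$$
uniformly in $i\geq 1$.

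Plugging this into the series and using the identity $\sum_{i\geq 1}\frac{T_\ep^i}{i!}\,i= T_\ep e^{T_\ep}$ gives the clean bound
$$
e^{-T_\ep}\sum_{i=1}^{+\infty}\frac{T_\ep^i}{i!}R_i(L')\leq \frac{C_\beta\, T_\ep}{(L')^\beta}.
$$
Consequently, \eqref{nonextinctioncondition} holds as soon as
$$
(L')^\beta\geq \frac{2C_\beta(\theta+\ep)T_\ep}{\ep}.
$$
Since $T_\ep\sim\tfrac{1}{r^-}\ln\tfrac{1}{\ep}$ and $\theta+\ep\to\theta$, there exists $\ep_0>0$ such that for $0<\ep<\ep_0$ this is guaranteed by
$$
L'\geq C_0\Bigl(\tfrac{1}{\ep}\ln\tfrac{1}{\ep}\Bigr)^{1/\beta}
$$
with $C_0=C_0(\theta,r^-,\eta,\beta,a)>0$. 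Recalling $L'=(1-m)L$ gives the announced form of $L_\ep$ with $C^+:=C_0$ depending only on $\theta,r^-$ and $J$ (through $a,\beta$).

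The main obstacle is the uniform-in-$i$ tail bound: one must resist the temptation to use the asymptotic $R_i(L)\sim i\,aC/L^\beta$ of Lemma \ref{lem:tails}-(ii), because that asymptotic is not uniform in $i\geq 1$. The argument above, using Lemma \ref{lem:Durret} and the factorization $1-z^i=(1-z)(1+z+\dots+z^{i-1})$, sidesteps this and yields a bound that is uniform in both $i\geq 1$ and $L'\geq 2/\xi_0$, which is exactly what the exponential series demands. The case $\beta=2$ is handled by the same argument without change, since Assumption \ref{ass:kernel-bis} gives the required control on $1-\widehat J(\xi)$ near $0$ regardless of $\beta\in(0,2]$.
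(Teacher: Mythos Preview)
Your proof is correct and follows essentially the same route as the paper's: both use Lemma \ref{lem:Durret} together with the inequality $1-\widehat J(\xi)^i\le i\,(1-\widehat J(\xi))$ (the paper invokes the mean value theorem, you use the factorization $1-z^i=(1-z)\sum_{k<i}z^k$), then insert the expansion \eqref{J-Fourier-ass} and sum via $\sum_{i\ge1}\tfrac{T_\ep^i}{i!}\,i=T_\ep e^{T_\ep}$ to reach $e^{-T_\ep}\sum_i\tfrac{T_\ep^i}{i!}R_i((1-m)L)\le C\,T_\ep/((1-m)L)^\beta$. One cosmetic point: you first list $\eta$ among the dependencies of $C_0$ and then drop it for $C^+$; this is fine, since the $\eta$-dependence is absorbed into $\ep_0$ through the asymptotic $T_\ep\sim\tfrac{1}{r^-}\ln\tfrac{1}{\ep}$, but it would read more cleanly to say so explicitly.
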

\begin{proof}
	Observe first that the existence of $L_\ep$ directly follows from Remark \ref{Rem1}. We now turn to the asymptotic of $L_\ep$ as $\ep\to 0$. From Lemma \ref{lem:Durret} we can write
	\begin{equation*}
	\begin{split}
	e^{-T_\ep}\sum_{i=1}^{\infty} \frac{T_\ep^i}{i!} R_i\left((1-m)L\right)&\leq e^{-T_\ep}\sum_{i=1}^{\infty} \frac{T_\ep^i}{i!}\frac{(1-m)}{2}L \int_{|\xi|\leq \frac{2}{(1-m)L}}\left[1-\left(\widehat J(\xi)\right)^i\right]d\xi\\
	&\leq e^{-T_\ep}\sum_{i=1}^{\infty} \frac{T_\ep^i}{i!}\frac{(1-m)}{2}L \int_{|\xi|\leq \frac{2}{(1-m)L}}i (1-\widehat J(\xi))d\xi,
	\end{split}
	\end{equation*}
	from the mean value theorem. Hence, from \eqref{J-Fourier-ass}, there is  $C=C(\be,a)>0$ such that, for all $\ep>0$ and all $L>0$ large enough,
	\begin{equation*}
	\begin{split}
	e^{-T_\ep}\sum_{i=1}^{\infty} \frac{T_\ep^i}{i!} R_i\left((1-m)L\right)
	&\leq C e^{-T_\ep}\sum_{i=1}^{\infty} \frac{iT_\ep^i}{i!}\frac{1}{(1-m)^\be L^\be}=C \frac{T_\ep}{(1-m)^\be L^\be}.
	\end{split}
	\end{equation*}
	From this and the definition of $T_\ep$ in \eqref{def:Tep}, we see that the non-extinction criterion \eqref{nonextinctioncondition} is satisfied, for all $\ep>0$ small enough, as soon as $L>L_\ep:=\frac{C^+}{1-m}(\frac 1 \ep \ln \frac 1 \ep)^{\frac1\beta}$ for some $C^+=C^+(\theta,r^-,\beta,a)>0$, which proves the desired result.
\end{proof}

\medskip

On the other hand, the case $\be=2$ is more tricky as revealed by the following corollaries. Recall that $r^-$ is defined in \eqref{r-moins} and $m_2(J)=\int_{\R}x^2J(x)dx$.

\begin{corollary}[Asymptotic non-extinction criterion for exponentially bounded kernels]\label{prop:exp-decay}
	Let Assumption \ref{ass:kernel}-(i) be satisfied. Assume further the exponential decay \eqref{expo-decay} and $r^-\in(0, 1)$. Let $\eta\in(\theta, 1)$ and $m\in(0, 1)$ be given. Then for all $\ep>0$, there is $L_\ep>0$ such that the conclusion \eqref{conclusion} holds for all $L\geq L_\ep$. Furthermore, there is $\ep_0>0$ small enough such that, for all $0<\ep<\ep_0$, $L_\ep$ can be chosen as follows,
	$$
L_\ep=\frac{C^+}{1-m}\ln\frac{1}{\ep},
	$$
	for some constant $C^+=C^+(r^-, J)>0.
	$
\end{corollary}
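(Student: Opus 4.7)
My plan is to verify the non-extinction criterion \eqref{nonextinctioncondition} of Proposition \ref{nonextinction} by upper-bounding its right-hand side using the Cram\'er-type inequality \eqref{esti-expo} made available by the exponential decay \eqref{expo-decay}. The logic parallels Corollary \ref{prop:0<beta<2}, but the Fourier-based bound from Lemma \ref{lem:Durret} is replaced by the large-deviation bound on the tails $R_i$. The existence of $L_\ep$ for each fixed $\ep>0$ is immediate from Remark \ref{Rem1}: the right-hand side of \eqref{nonextinctioncondition} is continuous, strictly decreasing in $L$, and tends to $0$ as $L\to+\infty$, while the left-hand side is a fixed positive constant.

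For the asymptotic as $\ep\to 0^+$, I would pick once and for all some $\la>0$ with $\La(\la)<+\infty$ (existing by \eqref{expo-decay}). Using \eqref{esti-expo} in the form $R_i(M)\le e^{-\la M+i\La(\la)}$ with $M=(1-m)L$ and summing the exponential series,
\begin{equation*}
e^{-T_\ep}\sum_{i=1}^{+\infty}\frac{T_\ep^i}{i!}R_i\bigl((1-m)L\bigr)\le e^{-\la(1-m)L}\sum_{i=1}^{+\infty}\frac{(T_\ep e^{\La(\la)})^i}{i!}e^{-T_\ep}\le e^{-\la(1-m)L}\,e^{T_\ep(e^{\La(\la)}-1)}.
\end{equation*}
Substituting $T_\ep=\frac{1}{r^-}\ln\frac{2(\eta-\theta)}{\ep}$ from \eqref{def:Tep}, this upper bound rewrites as $e^{-\la(1-m)L}\bigl(\tfrac{2(\eta-\theta)}{\ep}\bigr)^{\kappa}$ with $\kappa:=(e^{\La(\la)}-1)/r^-$. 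Imposing that this be at most $\ep/[2(\theta+\ep)]$, taking logarithms and isolating $L$, I am led to the sufficient condition
\begin{equation*}
\la(1-m)L\ge (1+\kappa)\ln\frac{1}{\ep}+O(1),
\end{equation*}
where the $O(1)$ depends on $\theta$ and $\eta$ only. Hence taking $L_\ep=\frac{C^+}{1-m}\ln\frac{1}{\ep}$ with any fixed $C^+>(1+\kappa)/\la$ closes the estimate for all $\ep$ small enough, and such a $C^+$ depends only on $r^-$ and $J$ as required.

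The only genuine obstacle is the choice of $\la$. Since the statement asks merely for the correct logarithmic order of magnitude and not a sharp constant, any admissible $\la$ works. A sharper constant would require minimizing $(r^-+e^{\La(\la)}-1)/(r^-\la)$ over $\la$ in the interior of $\{\La<+\infty\}$; the hypothesis $r^-\in(0,1)$ presumably serves to guarantee that such a minimizer exists strictly inside the admissible domain, but this optimization is not needed for the qualitative statement.
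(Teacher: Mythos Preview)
Your proof is correct and takes a more direct route than the paper's. The paper first invokes a truncation lemma (Lemma~\ref{LE-E1}) to restrict the sum in \eqref{nonextinctioncondition} to indices $M_\ep\le i\le N_\ep$ with $M_\ep\approx\gamma^-T_\ep$ and $N_\ep\approx 3T_\ep$; this truncation is precisely where the hypothesis $r^-\in(0,1)$ enters, since the discarded tails are shown to be $o(e^{-r^-T_\ep})$. On the truncated range the paper then applies \eqref{esti-expo}, uses $T_\ep/i\ge 1/6$ to obtain a bound of the form $e^{-T_\ep(1-e^{-c})}$, and closes by taking $\ell$ large so that $1-e^{-c}>r^-$, again requiring $r^-<1$. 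You bypass all of this by writing \eqref{esti-expo} as $R_i(M)\le e^{-\lambda M+i\Lambda(\lambda)}$ and summing the full exponential series directly. Your argument is shorter and in fact does not use $r^-<1$ at all; the resulting constant $C^+>(1+\kappa)/\lambda$ with $\kappa=(e^{\Lambda(\lambda)}-1)/r^-$ is perfectly legitimate for any $r^->0$. Your closing speculation about the role of $r^-\in(0,1)$ is therefore off the mark: in the paper's scheme it is needed for the truncation step (and for the later corollaries that rely on Lemma~\ref{LDH}, where one must know the dominant indices satisfy $i\gtrsim T_\ep$), not for optimizing over~$\lambda$.
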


\begin{corollary}[Asymptotic non-extinction criterion, some examples with $\be=2$]\label{prop:examples-be=2}
	Let Assumption \ref{ass:kernel}-(i) be satisfied. Assume $r^-\in(0, 1)$ and $m_2(J)=1$. Let $\eta\in(\theta, 1)$ and $m\in(0, 1)$ be given. Then for all $\ep>0$, there is $L_\ep>0$ such that the conclusion \eqref{conclusion} holds for all $L\geq L_\ep$. Furthermore, there is $\ep_0>0$ small enough (possibly depending on $\tilde \alpha$ or $\tilde \lambda$ in cases $(ii), (iii)$ and $(iv)$, see below) such that, for all $0<\ep<\ep_0$, $L_\ep$ can be chosen as follows.
    \begin{itemize}
	
	\item [(i)] Assume that there are $c_2>c_1>0$, $\alpha>2$ and $x_0\in \R$ such that $\frac{c_1}{\vert x\vert^{1+\alpha}}\leq J(x)\leq \frac{c_2}{\vert x\vert ^{1+\alpha}}$ for all $\vert x\vert  \geq \vert x_0\vert$, then 
	$$
	L_\ep= \frac{C^+}{1-m}\left(\frac1\ep\ln\frac1\ep\right)^{\frac1\al},  
	$$
	for some constant $C^+=C^+(\theta, r^-, c_2, \alpha)>0$.
	
	\item [(ii)] Assume that $J$ has regularly varying tails $RV(\al)$ with $\al>2$, then 
	$$
	L_\ep= \left(\frac1\ep\ln \frac1\ep\right)^{\frac{1}{\tilde \al}}, 
	$$
	for any given $0<\tilde \alpha<\alpha$. 
	
	\item[(iii)] Assume that $J$ has lognormal-type tails $LN(\ga,\la,\rho)$ with $\ga>1$, $\la>0$, $\rho \in \R$, then 
	$$
	L_\ep=  \exp \left[\left(\frac{1}{\tilde \lambda} \ln \frac 1\ep\right)^\frac{1}{\ga}\right],
	$$  
	for any given $0<\tilde \lambda<\lambda$.
	
	\item[(iv)] Assume that $J$ has Weibull-like tails $WE(\al,\la,\rho)$ with $0<\al<1$, $\la>0$, $\rho \in \R$ then 
	$$
	L_\ep= \frac{1}{1-m}\left(\frac{1}{\tilde \lambda}\ln \frac 1\ep\right)^{\frac 1\alpha}, \quad \text{ if } \quad 0<\al<\frac 2 3,
	$$
    for any given $0<\tilde \lambda<\lambda$, while	
	$$
	L_\ep =\left(\ln \frac 1 \ep \right)^{\frac{1}{2(1-\tilde \alpha)}},\quad  \text{ if } \quad \frac 23\leq \al<1,
    $$
    for any given $0<\alpha<\tilde \alpha<1$. 
\end{itemize}
\end{corollary}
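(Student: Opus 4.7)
The plan is to apply the general non-extinction criterion of Proposition \ref{nonextinction}, which reduces the matter to exhibiting, for the claimed $L_\ep$ in each of cases (i)--(iv), the sufficient inequality
\begin{equation*}
\frac{\ep}{2(\theta+\ep)}\;\geq\; e^{-T_\ep}\sum_{i=1}^{\infty}\frac{T_\ep^i}{i!}R_i\bigl((1-m)L\bigr),
\end{equation*}
where $T_\ep = (r^-)^{-1}\ln(2(\eta-\theta)/\ep)$ has order $\ln \ep^{-1}$. The existence of \emph{some} $L_\ep$ for each fixed $\ep>0$ is already given by Remark \ref{Rem1}; only its asymptotic size as $\ep \to 0^+$ is at issue.

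The shared scheme is to split the series at two cut-offs $1 \leq i_1 \leq i_0 = i_0(L)$, with $i_1$ large enough for the uniform convergence in Lemma \ref{LDH} to take effect and $i_0$ the largest index for which $(1-m)L \geq \gamma_{i_0}$ (with $\gamma_n \gg d_n$ from Table 1). The contribution of the small indices $\{i < i_1\}$ is bounded by $e^{-T_\ep}\sum_{i<i_1}T_\ep^i/i! = O\bigl(\ep^{1/r^-}(\ln\ep^{-1})^{i_1-1}\bigr)$, which is $o(\ep)$ precisely because of the hypothesis $r^- \in (0,1)$. On the middle block $\{i_1 \leq i \leq i_0\}$, Lemma \ref{LDH} yields $R_i((1-m)L) \leq 2iR_1((1-m)L)$, hence
\begin{equation*}
e^{-T_\ep}\sum_{i=i_1}^{i_0}\frac{T_\ep^i}{i!}R_i((1-m)L) \;\leq\; 2T_\ep R_1((1-m)L).
\end{equation*}
On the tail $\{i > i_0\}$ I use $R_i \leq 1$ together with the Poisson tail estimate $e^{-T_\ep}\sum_{i > i_0}T_\ep^i/i! \leq (eT_\ep/i_0)^{i_0}$, which is super-exponentially small as soon as $i_0/T_\ep \to +\infty$.

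Solving $d_{i_0} \sim (1-m)L$ using Table 1 yields $i_0 \sim L^2/\ln L$ in cases (i)--(ii), an analogous expression in (iii), and $i_0 \sim L^{2-2\alpha}$ in (iv). Inserting the prescribed asymptotics of $R_1$ (algebraic in (i)--(ii), lognormal in (iii), stretched-exponential in (iv)) together with the proposed ansatz for $L_\ep$ into the head bound $2T_\ep R_1((1-m)L_\ep)$, one checks that this quantity is at most a constant times $\ep^{\sigma}$ with $\sigma > 1$: the gap $\tilde\alpha < \alpha$ in (ii) and $\tilde\lambda < \lambda$ in (iii)--(iv) is exactly what produces this margin and absorbs the polynomial/logarithmic pre-factors coming from $T_\ep$ and from the slowly varying or polynomial corrections of $R_1$. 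In the cleaner case (i) no such correction is present, whence an explicit constant $C^+ = C^+(\theta,r^-,c_2,\alpha)$. In all three first cases, $i_0/T_\ep$ is a positive power of $\ln\ep^{-1}$, so the Poisson tail is automatically $o(\ep)$ as well.

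The principal difficulty lies in case (iv) in the regime $\alpha \in [2/3,1)$. The natural ansatz $L_\ep \sim (\tilde\lambda^{-1}\ln\ep^{-1})^{1/\alpha}$ controls the head (one has $R_1((1-m)L_\ep) \sim \ep^{\lambda/\tilde\lambda}$ with $\lambda/\tilde\lambda > 1$), but it yields $i_0 \sim (\ln\ep^{-1})^{(2-2\alpha)/\alpha}$, which fails to exceed $T_\ep \sim \ln\ep^{-1}$ exactly when $\alpha \geq 2/3$; the Poisson tail then ceases to be negligible. The remedy, as recorded in the statement, is to inflate $L_\ep$ to $(\ln\ep^{-1})^{1/(2(1-\tilde\alpha))}$ with $\tilde\alpha > \alpha$, which gives $i_0 \sim (\ln\ep^{-1})^{(1-\alpha)/(1-\tilde\alpha)} \gg T_\ep$ and revives the tail bound; the head is then automatically negligible because $R_1((1-m)L_\ep)$ decays stretched-exponentially in $(\ln\ep^{-1})^{\alpha/(2(1-\tilde\alpha))}$ with exponent strictly greater than $1$. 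This transition at $\alpha = 2/3$ is precisely the dichotomy built into the statement.
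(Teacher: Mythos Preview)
Your proof is correct and follows the same overall strategy as the paper. The only difference is organizational: the paper first isolates, via a standalone truncation lemma (Lemma~\ref{LE-E1}), the Poisson mass to the fixed window $i\in[\gamma^-T_\ep,\,3T_\ep]$ with remainder $o(e^{-r^-T_\ep})$, and then applies Lemma~\ref{LDH} uniformly on that window under the single hypothesis $(1-m)L\gg d_{M_\ep}$; you instead let the upper cut-off $i_0$ be determined by $L$ through $\gamma_{i_0}\leq(1-m)L$ and then require $i_0/T_\ep\to\infty$ for the Poisson tail beyond it. The two formulations are equivalent and both reduce to the pair of conditions $L_\ep\gg d_{cT_\ep}$ and $T_\ep R_1((1-m)L_\ep)\lesssim\ep$ (the paper's \eqref{condition}), hence to the same $\alpha=2/3$ dichotomy in~(iv). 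One harmless slip: in cases (i)--(iii) your $i_0$ is not ``a positive power of $\ln\ep^{-1}$'' but rather a power of $\ep^{-1}$ (in (i)--(ii)) or sub-exponential in $\ep^{-1}$ (in (iii)), so $i_0/T_\ep\to\infty$ actually holds even more comfortably than you state.
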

\begin{corollary}[Asymptotic non-extinction criterion, Weibull-like tails $WE(\al, \la, \rho=0)$]\label{prop:Weibull}
	Let Assumption \ref{ass:kernel}-(i) be satisfied. Further assume that $J$ has Weibull-like tails $WE(\al, \la, \rho=0)$ and $r^-\in(0, 1)$. Let $\eta\in(\theta, 1)$ and $m\in(0, 1)$ be given. Then for all $\ep>0$, there is $L_\ep>0$ such that the conclusion \eqref{conclusion} holds for all $L\geq L_\ep$. Furthermore, there is $\ep_0>0$ small enough such that, for all $0<\ep<\ep_0$, $L_\ep$ can be chosen as follows,
	$$
	L_\ep=\frac{C^+}{1-m}\left(\frac 1 \lambda \ln \frac 1 \ep\right)^{\frac 1\alpha},
	$$
	for some $C^+=C^+(J)>0$.
\end{corollary}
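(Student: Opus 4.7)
The plan is to combine the non-extinction criterion \eqref{nonextinctioncondition} of Proposition \ref{nonextinction} with the Nagaev-type estimate \eqref{WEbe=0}. The existence of some $L_\ep>0$ for each $\ep>0$ is immediate from the monotonicity recorded in Remark \ref{Rem1}, so the issue is to produce the quantitative asymptotic. Setting $L_1:=(1-m)L$ and using $\sum_{i\geq 1} i\,T_\ep^i/i!=T_\ep e^{T_\ep}$, estimate \eqref{WEbe=0} yields
\[
e^{-T_\ep}\sum_{i=1}^{+\infty}\frac{T_\ep^i}{i!}R_i(L_1)\leq C\,S_G(\ep,L)+C\,T_\ep\,R_1(L_1/2),
\]
with $S_G(\ep,L):=e^{-T_\ep}\sum_{i\geq 1}(T_\ep^i/i!)\exp(-L_1^2/(20i))$. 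It then suffices to check that, for $L=L_\ep:=\frac{C^+}{1-m}\left(\frac{1}{\la}\ln\frac{1}{\ep}\right)^{1/\al}$ with $C^+>0$ large enough, each of the two summands is bounded by $\ep/[8(\theta+\ep)]$ for $\ep$ small enough.

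For the Weibull contribution, the asymptotics $R_1(u)\sim c\,e^{-\la u^\al}$ as $u\to+\infty$, combined with $T_\ep=\frac{1}{r^-}\ln\frac{2(\eta-\theta)}{\ep}$ (so that $\ln T_\ep=o(\ln\tfrac{1}{\ep})$), reduces the required inequality $C\,T_\ep R_1(L_1/2)\leq \ep/[8(\theta+\ep)]$, after taking logarithms, to
\[
\la\,(L_1/2)^\al\geq (1+o(1))\ln(1/\ep).
\]
Substituting $L_1=C^+\left(\frac{1}{\la}\ln\frac{1}{\ep}\right)^{1/\al}$ the left-hand side equals $(C^+/2)^\al\ln(1/\ep)$, so this is satisfied for $\ep$ small as soon as $C^+$ is chosen large enough (depending only on $J$).

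For the Gaussian contribution, I would split the series at $i^\star:=\lceil M T_\ep\rceil$ with $M>1$ to be chosen. On the head $i\leq i^\star$ the factor $\exp(-L_1^2/(20i))$ is maximal at $i=i^\star$, so the head is bounded by $\exp(-L_1^2/(20 M T_\ep))$; on the tail $i>i^\star$ the Chernoff bound for a Poisson distribution of parameter $T_\ep$ gives
\[
e^{-T_\ep}\sum_{i>MT_\ep}\frac{T_\ep^i}{i!}\leq \exp\!\bigl(-T_\ep(M\ln M-M+1)\bigr).
\]
With $L_\ep\sim (\text{const})(\ln(1/\ep))^{1/\al}$ and $T_\ep\sim (\ln(1/\ep))/r^-$ one has $L_1^2/(M T_\ep)\sim(\text{const})(\ln(1/\ep))^{2/\al-1}$, which, since $0<\al<1$, tends to infinity strictly faster than $\ln(1/\ep)$; hence the head piece is $o(\ep^N)$ for every integer $N$. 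Choosing $M$ large enough so that $(M\ln M-M+1)/r^->1$ makes the tail piece $O(\ep^{1+\eta})$ for some $\eta>0$. Collecting the two bounds verifies \eqref{nonextinctioncondition} for $L=L_\ep$ and $\ep$ small, and Proposition \ref{nonextinction} then yields \eqref{conclusion}. The main obstacle is the control of $S_G(\ep,L_\ep)$: the Poisson weights concentrate around $i\approx T_\ep$, so the effective Gaussian exponent is $L_\ep^2/T_\ep$, and the proposed scaling has been tuned precisely so that $L_\ep^2/T_\ep$ dominates $\ln(1/\ep)$ when $0<\al<1$, leaving the Weibull tail term as the dominant one and fixing the constant $C^+$.
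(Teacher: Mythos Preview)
Your argument is correct and close to the paper's. Both use the Nagaev bound \eqref{WEbe=0} as the key input, handle the Weibull tail term $T_\ep R_1(L_1/2)$ identically, and conclude via Proposition \ref{nonextinction}. The one difference lies in the Gaussian piece $S_G$: the paper first invokes Lemma \ref{LE-E1} to restrict the sum to $M_\ep\leq i\leq N_\ep=[3T_\ep]+1$ (absorbing the truncated terms into an $o(e^{-r^-T_\ep})$ remainder) and then bounds $\exp(-L_1^2/(20i))\leq \exp(-L_1^2/(20N_\ep))$ directly, whereas you apply \eqref{WEbe=0} to the full series and split at $i^\star=\lceil M T_\ep\rceil$, controlling the upper tail by a Chernoff bound for the Poisson law. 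Your route is slightly more self-contained (it bypasses Lemma \ref{LE-E1}), and in fact your Chernoff condition $(M\ln M-M+1)/r^->1$ can be met for any $r^->0$ by taking $M$ large, so the hypothesis $r^-\in(0,1)$ is not actually used in your version; the paper's lower truncation in Lemma \ref{LE-E1} is where that hypothesis enters.
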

	
Again the existence of $L_\ep$ in Corollaries \ref{prop:exp-decay}, \ref{prop:examples-be=2} and \ref{prop:Weibull} directly follows from Remark \ref{Rem1}. We now turn to the estimates in Corollaries \ref{prop:exp-decay}, \ref{prop:examples-be=2} and \ref{prop:Weibull}. We assume 
\begin{equation}\label{hypr-}
0<r^-<1.
\end{equation}
As a preparation we consider the sum of the series in \eqref{nonextinctioncondition} and roughly  show that, for $t$ large enough,
the main contribution  corresponds to the indexes $i$ around $t$. To see this, observe that for any $t>0$ and any $M(t)\in\mathbb N$ such that $M(t)\leq t$ one has, using $i!\geq i^i e^{-i}$ and since $i\mapsto i+i\ln t-i\ln i$ is increasing on $(0,t)$, 
\begin{equation}
\label{M}
e^{-t}\sum_{i=1}^{M(t)} \frac{t^i}{i!}\leq e^{-t}\sum_{i=1}^{M(t)} \exp\left(i+i\ln t-i\ln i\right)\leq e^{-t} M(t)\left(\frac{et}{M(t)}\right)^{M(t)}.
\end{equation}
On the other hand, for any $t>0$ and any $N(t)\in \mathbb{N}$ such that $N(t)>et$, one has
\begin{equation}
\label{N}
e^{-t}\sum_{i=N(t)}^{+\infty} \frac{t^i}{i!}\le e^{-t} \sum_{i=N(t)}^{+\infty} \left(\frac{et}{i}\right)^i \le e^{-t}\sum_{i=N(t)}^{+\infty} \left(\frac{et}{N(t)}\right)^i= e^{-t}\left(\frac{et}{N(t)}\right)^{N(t)}\frac{N(t)}{N(t)-et}.
\end{equation}
Let us choose $0<\ga^-<1$ such that 
$$
\ga^-\ln (e/\ga^-)<1-r^-,
$$
and $M(t)=[\ga^- t]$.  Then $\gamma^-t-1<M(t)\leq \gamma ^- t < t$ and thus \eqref{M} yields
$$
e^{-t}\sum_{i=1}^{M(t)} \frac{t^i}{i!}\leq \ga^- t e^{-t} \exp\left(\ga^- t\ln \frac{et}{\ga^- t-1}\right),
$$
so that 
$$
e^{-t}\sum_{i=1}^{M(t)} \frac{t^i}{i!}\leq \ga^- t \exp\left[(\ga^- \ln (e/\ga^-)-1)t+\mathcal O(1)\right]=o(e^{-r^- t}), \quad \text { as } t \to +\infty.
$$
Let us next choose $N(t)=[3t]+1>et$, so that \eqref{N} yields
$$
e^{-t}\sum_{i=N(t)}^{\infty} \frac{t^i}{i!}\leq e^{-t}\frac{[3t]+1}{[3t]+1-et}=o(e^{-r^- t}), \quad \text { as } t \to +\infty.
$$
To summarize the above analysis,  since $J^{*(i)}\geq 0$ and $\int_{\R} J^{*(i)}(x)dx=1$ for any $i\geq 1$, we have proved the following lemma.
\begin{lemma}\label{LE-E1}
Assume that $0<r^-<1$. Select $\gamma^->0$ small enough so that $\ga^-\ln (e/\ga^-)<1-r^-$. Defining 
$$
M(t):=[\gamma^-t], \quad N(t):=[3t]+1,
$$
we have
\begin{equation}\label{E1}
e^{-t}\sum_{i=1}^{+\infty} \frac{t^i}{i!} R_i\left((1-m)L\right)=e^{-t}\sum_{i=M(t)}^{N(t)} \frac{t^i}{i!}R_i\left((1-m)L\right)+o\left(e^{-r^- t}\right), \quad \text { as } t \to +\infty,
\end{equation}
independently of $m\in(0,1)$ and $L>0$.
\end{lemma}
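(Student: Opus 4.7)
The plan is to split the series $e^{-t}\sum_{i=1}^{+\infty} \frac{t^i}{i!} R_i((1-m)L)$ into three index ranges: the ``low tail'' $1\le i<M(t)$, the ``middle range'' $M(t)\le i\le N(t)$ (which is exactly what we keep on the right-hand side), and the ``high tail'' $i>N(t)$. The crucial observation is that since each $J^{*(i)}$ is a probability density, one has the uniform bound $0\le R_i((1-m)L)\le 1$ for all $i\ge 1$, all $m\in(0,1)$ and all $L>0$. Therefore it suffices to establish
$$
e^{-t}\sum_{i=1}^{M(t)-1}\frac{t^i}{i!}+e^{-t}\sum_{i=N(t)+1}^{+\infty}\frac{t^i}{i!}=o(e^{-r^-t}),\quad t\to+\infty,
$$
and this estimate will automatically be uniform in $m$ and $L$.

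For the low tail I would invoke the Stirling-type lower bound $i!\ge i^i e^{-i}$, giving $\frac{t^i}{i!}\le\bigl(\frac{et}{i}\bigr)^i$, together with the monotonicity of $i\mapsto i+i\ln t-i\ln i$ on $(0,t)$. This yields the bound \eqref{M} at the endpoint $i=M(t)=[\gamma^- t]$, namely
$$
e^{-t}\sum_{i=1}^{M(t)}\frac{t^i}{i!}\le \gamma^- t\,\exp\!\left[\bigl(\gamma^-\ln(e/\gamma^-)-1\bigr)t+O(1)\right].
$$
Our choice of $\gamma^->0$ with $\gamma^-\ln(e/\gamma^-)<1-r^-$ (which is feasible since $\gamma\ln(e/\gamma)\to 0$ as $\gamma\to 0^+$) forces the exponent to be strictly less than $-r^- t$ for large $t$, so this partial sum is $o(e^{-r^- t})$.

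For the high tail I would use a geometric comparison: once $i\ge N(t)=[3t]+1>et$, the terms $\bigl(\frac{et}{i}\bigr)^i$ are bounded by a geometric series with ratio at most $et/N(t)\le e/3<1$. This yields the bound \eqref{N}, which decays like $e^{-t}(e/3)^{3t}$ times a polynomial factor, hence faster than any prescribed exponential and in particular is $o(e^{-r^- t})$.

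The main technical obstacle is really the calibration of $\gamma^-$: one needs simultaneously $M(t)<t$ (to exploit the monotonicity argument) and $\gamma^-\ln(e/\gamma^-)<1-r^-$ (to beat the rate $e^{-r^-t}$). Both are achievable precisely because $r^-<1$, which is why the hypothesis \eqref{hypr-} is essential. Once these two tail bounds are in place, the conclusion follows by combining them with the uniform bound $R_i\le 1$ and collecting the remaining middle indices on the right-hand side of \eqref{E1}.
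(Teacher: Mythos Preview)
Your proposal is correct and follows essentially the same argument as the paper: the paper also uses the uniform bound $0\le R_i\le 1$ (from $\int_\R J^{*(i)}=1$), then bounds the low tail via $i!\ge i^ie^{-i}$ and the monotonicity of $i\mapsto i+i\ln t-i\ln i$ on $(0,t)$ to obtain \eqref{M}, and the high tail via the geometric comparison \eqref{N} with $N(t)=[3t]+1>et$. Your discussion of why the constraint $\gamma^-\ln(e/\gamma^-)<1-r^-$ forces $r^-<1$ matches the paper's implicit use of \eqref{hypr-}.
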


Let us recall that $T_\ep=\frac{1}{r^-}\ln \frac{2(\eta-\theta)}{\ep}$ and define $M_\ep:=M(T_\ep)$ and $N_\ep:=N(T_\ep)$. Then, from Lemma \ref{LE-E1}, we see that the non-extinction criterion \eqref{nonextinctioncondition} is satisfied as soon as 
\begin{equation}\label{estrb2}
\frac{\ep}{4\theta}\geq e^{-T_\ep}\sum_{i=M_\ep}^{N_\ep} \frac{T_\ep^i}{i!} R_i\left((1-m)L\right).
\end{equation}
holds for $\ep>0$ small enough. We now explore the new criterion \eqref{estrb2} with various assumptions.

\medskip

\begin{proof}[\bf Proof of Corollary \ref{prop:exp-decay}] Here we assume that $J$ is exponentially bounded in the sense of \eqref{expo-decay}.  Let us fix $\lambda>0$  so that $\La(\lambda)<+\infty$. Now choose $L=T_\ep\ell$ with $\ell>0$ to be determined later. It follows from \eqref{esti-expo} that
\begin{equation*}
\begin{split}
e^{-T_\ep}\sum_{i=M_\ep}^{N_\ep} \frac{T_\ep^i}{i!} R_i\left((1-m)L\right)&\leq e^{-T_\ep}\sum_{i=M_\ep}^{N_\ep} \frac{T_\ep^i}{i!} \exp\left(-i\left(\lambda\frac{(1-m)\ell T_\ep}{i}-\La(\lambda)\right)\right)\\
&\leq e^{-T_\ep}\sum_{i=M_\ep}^{N_\ep} \frac{T_\ep^i}{i!} \exp\left(-i\left(\lambda\frac{(1-m)\ell}{6}-\La(\lambda)\right)\right),
\end{split}
\end{equation*}
for $\ep>0$ small enough since then $\frac{T_\ep}{i}\geq \frac{T_\ep}{N_\ep}=\frac{T_\ep}{[3T_\ep]+1}\geq \frac 1 6$. Hence we reach
\begin{equation*}
\begin{split}
&e^{-T_\ep}\sum_{i=M_\ep}^{N_\ep} \frac{T_\ep^i}{i!} R_i\left((1-m)L\right)
\leq e^{-T_\ep}\exp\left(T_\ep e^{-\left(\lambda\frac{(1-m)\ell}{6}-\La(\lambda)\right)}\right).
\end{split}
\end{equation*}
As a result the criterion  \eqref{estrb2} is satisfied as soon 
$$
e^{-\left(\lambda\frac{(1-m)\ell}{6}-\La(\lambda)\right)}\leq \frac{-\ln (4\theta)+\ln \ep+T_\ep}{T_\ep}.
$$
Recalling $T_\ep=\frac{1}{r^-}\ln\frac{2(\eta-\theta)}{\ep}$ this rewrites as
$$
e^{-\left(\lambda\frac{(1-m)\ell}{6}-\La(\lambda)\right)}\leq \frac{-\ln (4\theta)}{T_\ep}+1-r^-\frac{-\ln\ep}{\ln(2(\eta-\theta))-\ln\ep}.
$$
Since the right hand side tends to $1-r^->0$ as $\ep \to 0$,  one can choose $\ell>0$ large enough so that the above inequality holds for $\ep>0$ small enough. This completes the proof of Corollary \ref{prop:exp-decay}.
\end{proof} 

\medskip

Now, we denote $d_n$ the threshold sequence as given by Table 1 in Lemma \ref{LDH}, and $\gamma_n$ a sequence such that $\gamma _n \gg d_n$.  From the uniform conclusion \eqref{conclusion-table1} of Lemma \ref{LDH}, there is $\ep_0>0$ such that, for all $0<\ep<\ep_0$, all $L\geq \gamma_{M_\ep}$, 
$$
e^{-T_\ep}\sum_{i=M_\ep}^{N_\ep} \frac{T_\ep^i}{i!} R_i\left((1-m)L\right)\leq 
2e^{-T_\ep}\sum_{i=M_\ep}^{N_\ep} \frac{T_\ep^i}{i!}iR_1\left((1-m)L\right)\leq 
2 T_\ep R_1\left((1-m)L\right).
$$
As a consequence the criterion \eqref{estrb2} is asymptotically satisfied as soon as $L\geq L_\ep$ where
\begin{equation}\label{condition}
L_\ep \gg d_{M_\ep}\quad \text{ and }\quad
R_1\left((1-m)L_\ep\right)\leq C\frac{\ep}{\ln \frac{1}{\ep}},
\end{equation}
where  $C=C(\theta, r^-)$ is a positive constant and where we recall that $M_\ep=[\gamma^-T_\ep]$, $T_\ep=\frac{1}{r^-}\ln\frac{2(\eta-\theta)}{\ep}$. We now rely on Table 1 to compute $L_\ep$ for different kernels in Corollary \ref{prop:examples-be=2}.

\medskip

\begin{proof}[\bf Proof of Corollary \ref{prop:examples-be=2}-$(i)$] In that case, since $$
R_1\left((1-m)L_\ep\right)\le \frac{2c_2}{\alpha(1-m)^\alpha}\frac{1}{L_\ep^\alpha},
$$ 
the second condition in \eqref{condition} is asymptotically satisfied if 
$$(1-m)L_\ep=C^+\left(\frac1\ep\ln\frac 1\ep\right)^{\frac1\alpha}\text{ with appropriate } C^+=C^+(\theta, r^-, c_2, \alpha)>0. 
$$
In view of the line concerning $RV(\alpha)$  in Table 1 of Lemma \ref{LDH}, the first condition in \eqref{condition} is also satisfied. This proves Corollary \ref{prop:examples-be=2}-$(i)$. 
\end{proof}

\vspace{1ex}

\begin{proof}[\bf Proof of Corollary \ref{prop:examples-be=2}-$(ii)$] For $RV(\al)$ with $\al>2$ and any given $0<\tilde \alpha<\alpha$, it follows from \eqref{tail-RV} and known properties of slowly varying functions, see e.g. \cite[VIII, Lemma 2]{Feller}, that the second condition in \eqref{condition} is asymptotically satisfied if 
$$
L_\ep=\left(\frac1\ep\ln\frac1\ep\right)^{\frac{1}{\tilde \alpha}}.
$$ 
In view of the line concerning $RV(\alpha)$ in Table 1 of Lemma \ref{LDH}, the first condition in \eqref{condition} is also satisfied. This proves Corollary \ref{prop:examples-be=2}-$(ii)$.
\end{proof}
	
\medskip

\begin{proof}[\bf Proof of Corollary \ref{prop:examples-be=2}-$(iii)$] For $LN(\ga,\lambda,\rho)$ with $\ga>1$, $\lambda>0$, $\rho \in \R$, and given $0<\tilde \lambda<\lambda$, it follows from \eqref{tail-LN} that the second condition in \eqref{condition} is asymptotically satisfied if 
$$
L_\ep=  \exp \left[\left(\frac{1}{\tilde \lambda} \ln \frac 1\ep\right)^\frac{1}{\ga}\right]. 
$$
In view of the two  lines concerning $LN(\ga,\lambda,\rho)$ in Table 1 of Lemma \ref{LDH}, the first condition in \eqref{condition} is also satisfied. This proves Corollary \ref{prop:examples-be=2}-$(iii)$.
\end{proof}

\medskip
		
\begin{proof}[\bf  Proof of Corollary \ref{prop:examples-be=2}-$(iv)$] For $WE(\al,\la,\rho)$ with $0<\al<1$, $\la>0$, $\rho \in \R$, in view of \eqref{tail-WE} and the line concerning $WE(\al,\la,\rho)$ in Table 1 of Lemma \ref{LDH}, to reach the condition \eqref{condition} it is sufficient to have
\begin{equation}\label{condition2}
L_\ep\gg \left(\ln \frac 1 \ep\right)^{\frac{1}{2(1-\alpha)}} \quad \text{ and } \quad L_\ep^\rho e^{-\lambda (1-m)^\alpha L_\ep^\alpha} \leq C' \frac{\ep}{\ln \frac 1 \ep},
\end{equation}
for some appropriate $C'=C'(\theta,r^-,m,\alpha)$.

When $0<\alpha<\frac 23$, the choice 
$$
L_\ep=\frac{1}{1-m}\left(\frac{1}{\tilde \lambda}\ln \frac 1 \ep\right)^{\frac 1 \alpha}, 
$$
for any given $0<\tilde \lambda<\lambda$, ensures that both conditions in \eqref{condition2} are asymptotically satisfied. 

On the other hand, when $\frac  23 \leq \alpha <1$, the first condition in \eqref{condition2} prevents such a choice and we are compelled to take 
$$
L_\ep=\left(\ln \frac 1 \ep\right)^{\frac{1}{2(1-\tilde \alpha)}}, 
$$
for any given $\frac23\le\alpha<\tilde \alpha<1$, for the two conditions in \eqref{condition2} to be asymptotically satisfied. This proves Corollary \ref{prop:examples-be=2}-$(iv)$.
\end{proof}

\medskip 

Observe that when $\frac 2 3 \leq \alpha<1$ the above estimate on $L_\ep$ is not so good in particular when $\al\to1$. However for some specific forms of Weibull-like tails $WE(\al,\la,\rho)$ with $\rho=0$, we can rely on \eqref{WEbe=0} to obtain a sharper estimate.

\medskip

\begin{proof}[\bf Proof of Corollary \ref{prop:Weibull}] By a change of variable, the estimate for $WE(\al, \la, \rho=0)$ can be transformed into one of \eqref{tail-weibull-zero}, namely for $WE(\al, 1, \rho=0)$. From \eqref{tail-weibull-zero} and \eqref{WEbe=0}, we have, up to a multiplicative constant,
\begin{eqnarray}
e^{-T_\ep}\sum_{i=M_\ep}^{N_\ep} \frac{T_\ep^i}{i!} R_i\left((1-m)L\right) &  \leq  & e^{-T_\ep}\sum_{i=M_\ep}^{N_\ep} \frac{T_\ep^i}{i!} \left[\exp\left(-\frac{(1-m)^2}{20i}L^2\right)+ i \exp\left(-\frac{(1-m)^\al}{2^\al}L^\al\right)\right]\nonumber\\
& \leq  & \exp\left(-\frac{(1-m)^2}{20N_\ep}L^2\right)+ T_\ep \exp\left(-\frac{(1-m)^\al}{2^\al}L^\al\right).\nonumber
\end{eqnarray}
Recalling $N_\ep=N(T_\ep)=[3T_\ep]+1$ and $T_\ep=\frac{1}{r^-}\ln\frac{2(\eta-\theta)}{\ep}$, one can check that the above right hand side is asymptotically smaller than $\frac{\epsilon}{4\theta}$ if we choose 
$$
L=L_\ep:=\frac{C^+}{1-m}\left(\ln \frac 1 \ep\right)^{\frac 1 \alpha}
$$ 
with $C^+=C^+(J)>0$ large enough. As a result, the non-extinction criterion \eqref{estrb2} is satisfied. We have thus reached the announced sharper estimate when $\rho=0$ (which, when $\alpha \to 1$, is consistent with the exponential case shown in Corollary \ref{prop:exp-decay}). This completes the proof of Corollary \ref{prop:Weibull}.
\end{proof}

\section{Quantitative estimates of the threshold phenomena}\label{quantitative}

In this section, relying on Sections \ref{extinction} and \ref{non-extinction}, we complete the proof of the main results of  Section \ref{main results}. We denote $u_L^\ep=u^\ep_L(t, x)$ the solution to
    $$ 
    \partial _t u=J\ast u-u+f(u), 
    $$
    starting from $\phi_L^\ep(x)=(\theta+\ep)\1_{(-L, L)}(x)$. We start with the extinction results.
\medskip

\begin{proof}[Proof of Theorem \ref{ext}]
     By \eqref{r^+}, $u_L^\ep$ is a sub-solution to problem \eqref{super}. As a result, Theorem \ref{ext} follows from Proposition \ref{prop:extinction} and the comparison principle.
\end{proof}

\medskip

\begin{proof}[Proof of Corollary \ref{asmptotic1}] The proof is a rather straightforward combination of Corollary \ref{asmptotic} and the fact that  $ \max_{T>0}T e^{-r^+T}= e^{-1}/r^+$.
\end{proof}

\medskip

\begin{proof}[Proof of Corollary \ref{asmptotic2}]  The proof is a straightforward consequence of Corollary \ref{lighttail}.    
\end{proof} 

\medskip

\begin{proof}[Proof of Corollary \ref{ext:examples}] The proof is a straightforward consequence of Corollary \ref{remarkbeta=2} and Remark \ref{rem:as-easily}.
\end{proof}

\medskip

Let us now investigate the propagation results.

\medskip

\begin{proof}[Proof of Theorem \ref{theo-propagation}]
    From \eqref{r-moins} in Assumption \ref{ass:f}, one has
    $$ 
    f(w)\geq \tilde{g}(w),\quad\forall w\in (-\infty, \de], 
    $$
    where $\tilde{g}(w)=r^-(w-\theta)$ was defined in \eqref{g}. Let $0<\alpha<\frac{\de-\theta}2$ be given small enough so that we can define a Lipschitz continuous function $\tilde{f}: \R\to\R$ such that $\tilde f\leq f$,
    \begin{eqnarray}
    \tilde{f}(w)=\begin{cases}
    f(w)\quad &\text{for } w\in (-\infty, \theta) \cup [\de, \infty),\\
    r^-(w-\theta)\quad &\text{for } w\in [\theta, \theta+2\al],
    \end{cases}
    \end{eqnarray}
and $\tilde f$ satisfies Assumption \ref{ass:f} (in particular $\int_0^1\tilde f(s)ds>0$ and \eqref{r-moins} holds on $[0,\theta+2\alpha]$). Denote $\tilde{w}=\tilde{w}(t, x)$ the solution to 
    \begin{equation}\label{tildew}
    \partial _t \tilde{w}=J\ast \tilde{w}-\tilde{w}+\tilde{f}(\tilde{w}),
    \end{equation} 
starting from $ \tilde{w}(0, x)=\phi _L ^\ep(x)$, so that $\tilde{w}(t, x)\leq u_L^\ep(t, x)$ from the comparison principle. Consider the time $T_\ep=\frac{1}{r^-}\ln\frac{2\al}{\ep}$. For $0<\ep\leq 2 \alpha$ we know from Proposition \ref{nonextinction} $(i)$ (setting $\eta=\theta+\al$) that $\tilde{w}\leq\theta+2\al\le\de$ on $[0, T_\ep]\times\R$. Since
    $$
    \tilde{f}(w)\geq \tilde g (w), \quad\forall w\in(-\infty, \theta+2\al], 
    $$    
   one obtains from Proposition \ref{nonextinction} $(ii)$ that, for any given $m\in(0, 1)$, there exists $\ep_0>0$ such that for all $\ep\in(0, \ep_0)$ and $L>L_\ep$, where $L_\ep$ satisfies \eqref{nonextinctioncondition}, one has
    $$ 
    u_L^\ep(T_\ep,x)\geq \tilde{w}(T_\ep, x)\geq(\theta+\al)\1_{(-mL_\ep,mL_\ep)}. 
    $$
From the propagation Assumption \ref{ass:prop} for the nonlinearity $f$, we know that $L_\alpha^{prop}<+\infty$ exists, that is, for $\ell > L_\alpha^{prop}$ the solution to \eqref{nonlocal} starting from $(\theta+\alpha)\1_{(-\ell,\ell)}$ propagates. As a result, for $\ep>0$ small enough so that $mL_\ep> L_\alpha^{prop}$, one has $u_L^\ep(T_\ep+t, x)\to 1$ as $t\to+\infty$ locally uniformly in space and therefore $u_L^\ep(t, x)\to1$ as $t\to+\infty$ locally uniformly in space. We have thus proved that, for $L_\ep$ satisfying \eqref{nonextinctioncondition}, we have $L_\ep^{prop}\leq L_\ep$ for $\ep>0$ small enough. This completes the proof.
\end{proof}    

\medskip

\begin{proof}[Proof of Corollary \ref{prop:corollary1}]
	Combining the arguments in the proof of Theorem \ref{theo-propagation} with Corollary \ref{prop:0<beta<2}, one can obtain the desired results. 
\end{proof}

\medskip

\begin{proof}[Proof of Corollary \ref{prop:corollary2}]
	Combining the arguments in the proof of Theorem \ref{theo-propagation} with Corollary \ref{prop:exp-decay}, one can obtain the desired results. 
\end{proof}

\medskip

\begin{proof}[Proof of Corollary \ref{prop:corollary3}]
	Combining the arguments in the proof of Theorem \ref{theo-propagation} with Corollary \ref{prop:examples-be=2} and Corollary \ref{prop:Weibull}, one can obtain the desired results. 
\end{proof}

\section{Propagation threshold}\label{propagation}

This section is devoted to the proof of the propagation threshold result, namely Theorem \ref{theo-ass-prop}. We shall rely on some ideas developed by 
Fife and McLeod in \cite[Theorem 3.2]{fife1977approach} and  crucially make use of the following integrability properties of the wave profile $U$.

\begin{lemma}\label{integrability}
	Let Assumption \ref{TWA} be satisfied. Then any monotone traveling wave $(c,U)$ solving \eqref{TW}, and whose existence follows from \cite{bates1997traveling},
satisfies the integrability properties
	$$ 
	\int_{-\infty}^0 U(x)dx<+\infty \quad \text{ and }	\quad \int_0^{+ \infty}(1-U(x))dx<+\infty.
	$$
\end{lemma}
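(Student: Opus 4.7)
The plan is to integrate the traveling-wave equation \eqref{TW} over semi-infinite intervals, tame the nonlocal contribution by combining Fubini's theorem with the finite first moment \eqref{first_moment}, and then invoke the stability of the two equilibria (encoded by $f'(0)<0$ and $f'(1)<0$) to convert the resulting integrability of $f(U)$ into the sought integrability of $U$ at $-\infty$ and of $1-U$ at $+\infty$.

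\textbf{Taming the nonlocal term.} For any $R<x$, Fubini's theorem (legitimate since $|U(y-z)-U(y)|\leq 1$ and the $y$-range is bounded) yields
$$
\int_R^x (J\ast U - U)(y)\,dy = \int_\R J(z)\int_R^x \bigl[U(y-z)-U(y)\bigr]\,dy\,dz.
$$
A change of variable rewrites the inner integral as $\int_{R-z}^R U(y)\,dy - \int_{x-z}^x U(y)\,dy$ (with the usual sign convention when $z<0$), and since $0\leq U\leq 1$ each of these two pieces has absolute value at most $|z|$, so that
$$
\left|\int_R^x\bigl[U(y-z)-U(y)\bigr]\,dy\right|\leq 2|z|.
$$
Assumption \eqref{first_moment} then provides the integrable dominator $2|z|J(z)$, and dominated convergence delivers a finite limit of $\int_R^x(J\ast U-U)(y)\,dy$ as $R\to\pm\infty$, bounded by a constant depending only on $\int|z|J(z)\,dz$.

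\textbf{Conclusion at both infinities.} I next integrate the TW equation $cU'=J\ast U-U+f(U)$ between $R$ and a fixed $x$. As $R\to-\infty$, the left-hand side $c[U(x)-U(R)]$ tends to $cU(x)$ (using $U(-\infty)=0$), and the nonlocal integral converges by the previous step, so that $\int_{-\infty}^x f(U(y))\,dy$ exists and is finite. Choosing $x_\theta$ with $U(x_\theta)=\theta$ and taking $x=x_\theta$, the bistable assumption forces $f(U)\leq 0$ on $(-\infty,x_\theta)$, whence $\int_{-\infty}^{x_\theta}\bigl(-f(U(y))\bigr)\,dy<+\infty$. Now $f'(0)<0$ furnishes $\nu>0$ and $x_0<x_\theta$ such that $-f(u)\geq \nu u$ for $u\in[0,U(x_0)]$; together with the monotonicity of $U$ this implies
$$
\nu\int_{-\infty}^{x_0}U(y)\,dy\leq \int_{-\infty}^{x_0}\bigl(-f(U(y))\bigr)\,dy<+\infty,
$$
and therefore $\int_{-\infty}^0 U<+\infty$. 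The integrability of $1-U$ at $+\infty$ follows by the symmetric argument: integrate from $x$ to $R$, let $R\to+\infty$, and use $f(U)\geq 0$ on $(x_\theta,+\infty)$ together with $f(u)\geq \mu(1-u)$ for some $\mu>0$ when $u$ is close to $1$ (ensured by $f'(1)<0$).

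\textbf{Main obstacle.} The delicate point is controlling $\int_R^x(J\ast U-U)(y)\,dy$ as $R\to\pm\infty$: the finite first moment \eqref{first_moment} is exactly what yields the integrable dominator $|z|J(z)$, and without it a heavy-tailed kernel would foil this dominated-convergence argument, leaving the conclusion unclear.
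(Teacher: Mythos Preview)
Your proof is correct and follows essentially the same strategy as the paper: integrate the traveling-wave equation over a long interval, control the nonlocal contribution by the first moment of $J$, and use the stability of the endpoints ($f'(0)<0$, $f'(1)<0$) to pass from integrability of $f(U)$ to that of $U$ and $1-U$. The one technical difference worth noting is that the paper bounds $\int_R^{R+\ell}(J\ast U-U)$ by writing $U(x-y)-U(x)=-\int_0^1 yU'(x-sy)\,ds$, which uses $U'$ and therefore requires a mollification argument when $c=0$; your change-of-variable bound uses only $0\leq U\leq 1$ and so handles both cases uniformly.
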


\begin{proof} This is nothing else than \cite[(5.9)]{bates1997traveling} but, to enlight the importance of the finite first moment hypothesis \eqref{first_moment} and for the convenience of the reader, we give a proof of the integrability of $1-U$ in $+\infty$ (the other one being similar).  

	From the equation satisfied by $U$, $U(+\infty)=1$, and the assumption $f'(1)<0$, there are $R>0$ and $C_1>0$ such that  
	\begin{equation}\label{ineqq}
	-(J\ast U)(x)+U(x)+cU'(x)=f(U(x))\ge C_1(1-U(x))>0,\quad \forall x\geq R.
	\end{equation}
Now we consider  $\ell>0$. When $c\neq 0$ the regularity of the wave, see subsection \ref{ss:towards}, enables to write
	\begin{eqnarray*}
		\int_R^{R+\ell} (J\ast U(x)-U(x))dx&=&\int_R^{R+\ell} \int_{-\infty}^{+\infty} J(y)[U(x-y)-U(x)] \,dy dx\nonumber\\
		&=&-\int_R^{R+\ell} \int_{-\infty}^{+\infty} J(y)\int_0^1 yU'(x-sy) \, ds dy dx\nonumber\\
		&=&-\int_{-\infty}^{+\infty} yJ(y) \int_0^1\left( \int_{R}^{R+\ell} U'(x-sy) dx\right) ds dy\nonumber\\
		&=&-\int_{-\infty}^{+\infty} yJ(y)\int_0^1\left(U(R+\ell-sy)-U(R-sy)\right) ds dy,\nonumber
	\end{eqnarray*}	
thanks to Fubini's theorem. Note that, when $c=0$, a mollifying argument as in \cite[Lemma 3.2]{alfaro2017propagation} shows that the above conclusion is still valid. As a result, since $0\leq U\leq 1$, we get, for any $\ell>0$,
	\begin{equation*}
	\left|\int_R^{R+\ell} (J\ast U(x)-U(x))dx\right| \leq 2\int_{-\infty}^{+\infty}|y|J(y)dy.
	\end{equation*}
	On the other hand, for any $\ell>0$,
	$$
	\left|\int_R^{R+\ell} cU'(x) dx \right|=  \vert c\vert  \left( U(R+\ell)-U(R) \right) \le \vert c\vert.
	$$
We thus deduce from \eqref{ineqq} that, for any $\ell>0$,
	$$
	C_1\int_R^{R+\ell}(1-U(x))dx \le \vert c\vert +2\int_{-\infty}^{+\infty}|y|J(y)dy <+\infty,
	$$
	which implies that $1-U\in L^1(R,\infty)$ and completes the proof of the lemma.
\end{proof}

\medskip

We now turn to the proof of Theorem \ref{theo-ass-prop}. 

\medskip

\begin{proof}[Proof of Theorem \ref{theo-ass-prop}] Let Assumption \ref{TWA} be satisfied and assume $\int_0^1f(u)du>0$. Consider $(c,U)$  an increasing traveling wave solution to \eqref{nonlocal} with $c>0$. We aim at showing that, for any $\ep\in(0,1-\theta]$, there exists $L>0$ large enough such that propagation occurs for $u_L^\ep$ the solution to \eqref{nonlocal} starting from \eqref{initial}. To do so and as mentioned above, we rely on the approach of \cite{fife1977approach} for the classical diffusion case.
	
	Let us consider the function $\underline{u}$ given by
	$$ 
	\underline{u}(t, x):=U_+(t, x)+U_-(t, x)-1-q(t), 
	$$
	with $U_\pm(t, x):=U(\zeta_\pm(t,x))$, where $\zeta_\pm(t,x)$ take the form
	\begin{equation*}
	\zeta_+(t,x)=x+ct-\xi(t),\;\; \zeta_-(t,x)=\zeta_+(t,-x)=-x+ct-\xi(t).
	\end{equation*}
Here $q=q(t)$ and $\xi=\xi(t)$ are functions to be determined for $\underline{u}$ to be a sub-solution to \eqref{nonlocal}.

From the above and the $U$-equation, we straightforwardly compute, for $t>0$ and $x\in \R$, 
	\begin{eqnarray}\label{sub2}
	N\underline{u}(t,x) &:=& \partial_t \underline{u}(t,x)-J\ast\underline{u}(t,x)+\underline{u}(t,x)-f\left(\underline{u}(t,x)\right)\nonumber \\
	&{}=&-\xi'(t)\left[U'(\zeta_+(t,x))+U'(\zeta_-(t,x))\right]\nonumber\\
	&&+f\big(U_+(t,x)\big)+f\big(U_-(t,x)\big)-f\Big(U_+(t,x)+U_-(t,x)-1-q(t)\Big)-q'(t).
	\end{eqnarray}
	
	Before going further, let us introduce some notations. Denote $\alpha:=\theta+\ep\in (\theta,1]$ the fixed height of the step initial data. Fix two constants $1-\alpha<q_0<q_1<1-\theta$  so that
	$$ 
	\theta<1-q_1<1-q_0<\al, 
	$$
	and define the function $\Phi$, continuous on $\R\times [0,+\infty)$, as
	$$ 
	\Phi(u, s):=
	\begin{cases}
	\frac{f(u-s)-f(u)}s, \quad &\text{ if } s>0,\\
	-f'(u), \quad &\text{ if  } s=0.
	\end{cases} 
	$$
Moreover, for $0<s\leq q_1$ we have $\theta<1-q_1\leq1-s<1$, so that $\Phi(1, s)>0$. Also $\Phi(1, 0)=-f'(1)>0$. Thus there exists $\mu>0$ such that $\Phi(1, s)\geq 2\mu$ for $0\leq s\leq q_1$. By continuity, there exists a $\de>0$ such that 
	$\Phi(u, s)\geq\mu$  for $1-\de\leq u\leq1$ and $0\leq s\leq q_1$. It then follows that 
	\begin{equation}\label{f'(1)<0}
	f(u-s)-f(u)\geq \mu s, \quad \text{ for all  } 1-\de\leq u\leq1 \text{ and } 0\leq s\leq q_1.
	\end{equation} 
Last, we fix $b>0$ large enough so that
	\begin{equation}\label{def-b}
	f(u)\leq b(1-u), \quad \text{ for all } 0\leq u \leq 1.
	\end{equation}

	\begin{claim}\label{limitbehavior}
		For any $s_0\geq 0$,
		$$
		g(t):= \int_{0}^{t}e^{-\mu(t-s)}(1-U(cs+s_0))ds
		$$
		tends to $0$ as $t\to+\infty$.
	\end{claim}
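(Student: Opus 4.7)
The plan is to exploit two facts: the exponential weight $e^{-\mu(t-s)}$ concentrates mass near $s=t$, and by Lemma \ref{integrability} combined with $c>0$, the factor $1-U(cs+s_0)$ is small when $s$ is large. Splitting the integral at $s=t/2$ will separate these two effects cleanly.

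More precisely, I would write
\[
g(t)=\int_0^{t/2}e^{-\mu(t-s)}\bigl(1-U(cs+s_0)\bigr)ds+\int_{t/2}^{t}e^{-\mu(t-s)}\bigl(1-U(cs+s_0)\bigr)ds=:I_1(t)+I_2(t).
\]
For $I_1(t)$, I bound $0\le 1-U\le 1$ and note that $e^{-\mu(t-s)}\le e^{-\mu t/2}$ on $[0,t/2]$, so $I_1(t)\le \frac{t}{2}e^{-\mu t/2}\to 0$ as $t\to +\infty$. For $I_2(t)$, I use $e^{-\mu(t-s)}\le 1$ and perform the change of variable $u=cs+s_0$ to get
\[
I_2(t)\le \int_{t/2}^{t}\bigl(1-U(cs+s_0)\bigr)ds=\frac{1}{c}\int_{ct/2+s_0}^{ct+s_0}\bigl(1-U(u)\bigr)du.
\]
Since $c>0$, the lower limit $ct/2+s_0\to +\infty$ as $t\to+\infty$, and Lemma \ref{integrability} guarantees $1-U\in L^1(R,+\infty)$; hence the tail integral on the right vanishes as $t\to +\infty$. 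Combining the two bounds gives $g(t)\to 0$.

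There is no real obstacle here: the only input beyond elementary calculus is the integrability of $1-U$ at $+\infty$, which is already secured by Lemma \ref{integrability}, together with the hypothesis $c>0$ ensuring that the change of variable pushes the integration interval off to infinity.
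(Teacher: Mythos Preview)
Your proof is correct. The splitting at $s=t/2$ cleanly separates the decay from the exponential factor and the decay from the integrability of $1-U$, and each half is estimated by an elementary bound; the only external input is Lemma \ref{integrability} together with $c>0$, exactly as you say.

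The paper takes a different and somewhat less direct route. It observes that $g$ is bounded, that $g$ satisfies the ODE $g'(t)=1-U(ct+s_0)-\mu g(t)$ (hence $g'\in L^\infty$), and that $g\in L^1(0,\infty)$ via Fubini and Lemma \ref{integrability}; it then invokes the general principle that $g\in L^1$ with $g'\in L^\infty$ forces $g(t)\to 0$. Your argument is more elementary and self-contained for the claim itself. The paper's approach, however, has a hidden payoff: the ODE identity is immediately reused to show $g(t_0)=\mu^{-1}(1-U(ct_0+s_0))$ at the maximum $t_0$, and the explicit computation of $\int_0^{+\infty}g(t)\,dt$ is recycled later (see the estimate of $\eta(+\infty)$, where the same Fubini computation appears). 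So the paper's detour is really setting up tools for the rest of the proof of Theorem \ref{theo-ass-prop}, whereas your argument is optimized for the claim in isolation.
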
	
	
	\begin{proof}
		Observe first that $0<g(t)\leq\int_0^t e^{-\mu(t-s)} ds=\frac{1}{\mu}\left(1-e^{-\mu t}\right)$, so that $g\in L^\infty(0, \infty)$. Next note that 
		$g'(t)=1-U(ct+s_0)-\mu g(t)$, so that $g'\in L^\infty(0, \infty)$. Next, by Fubini-Tonelli's theorem, 
		\begin{eqnarray}
		\int_0^{+\infty} g(t) dt &=& \int_0^{+\infty} \int_0^t e^{-\mu(t-s)}(1-U(cs+s_0)) ds dt = \int_0^{+\infty} e^{\mu s} (1-U(cs+s_0)) \int_s^{+\infty} e^{-\mu t} dt ds\nonumber\\
		&=&\frac{1}{\mu}\int_0^{+\infty}(1-U(cs+s_0)) ds=\frac{1}{c\mu}\int_{s_0}^{+\infty}(1-U(x))dx\nonumber\\
		&\le&\frac{1}{c\mu}\int_{0}^{+\infty}(1-U(x))dx=\frac{1}{c\mu}\norm{1-U}_{L^1(0, \infty)}<+\infty,\label{g(t)}
		\end{eqnarray}
		from Lemma \ref{integrability}, so that  $g\in L^1(0, \infty)$. Now, the combination of $g\in L^1(0, \infty)$ and $g'\in L^\infty(0, \infty)$ enforces $g(t)\to 0$ as $t\to+\infty$, which completes the proof of the claim.
	\end{proof}

\medskip

From $g(0)=0$ and the above claim, $g$ attains its maximum at some $t_0>0$, and
	\begin{equation}\label{g(t_0)}
	g(t_0)=\max_{t\geq 0}g(t)=\frac{1}{\mu}(1-U(ct_0+s_0))\le \frac{1}{\mu}(1-U(s_0)). 
	\end{equation}
	 For constants $\xi_0<0$ and $\eta_0>0$ with $s_0:=-\eta_0-\xi_0>0$ to be be determined below, we select
	\begin{equation}\label{q}
	q(t):=q_0e^{-\mu t} + bg(t)=q_0e^{-\mu t} + b\int_{0}^{t}e^{-\mu(t-s)}(1-U(cs+s_0))ds,\quad t \geq 0.
	\end{equation}
	We also let $\xi(t)=\xi_0+\eta(t)$ where $\eta$ is to be selected below with the properties
	\begin{equation}\label{eta-to-be-selected}
	\eta(0)=0, \; \eta'(t)> 0,\; \eta(t)\leq \eta_0\leq -\xi_0.
	\end{equation}
	
In the sequel, we aim at reaching $N\underline u(t,x)\leq 0$ for all $x\in \R$, $t> 0$. Since both $\underline u(t,\cdot)$ and $J$ are symmetric, it is sufficient to work with $x\geq 0$. Since $U'>0$ we have, for all $x\geq 0$ and $t> 0$, 
	\begin{eqnarray}
	1-U_+(t,x)+q(t) &=& 1-U(x+ct-\xi(t))+q(t) \nonumber\\
	&\le& 1-U(ct-\xi_0-\eta(t))+q_0+bg(t_0)\nonumber\\
	&\leq& 1-U(-\xi_0-\eta_0)+q_0+\frac{b}{\mu}(1-U(-\xi_0-\eta_0))\nonumber\\
	&=&(1+b/\mu)(1-U(s_0))+q_0.\nonumber
	\end{eqnarray}
	Choose $s_0>0$ large enough so that	
	$(1+b/\mu)(1-U(s_0))+q_0\leq q_1$.	
	As a consequence, for any such choice, one has, for all $t\geq 0$ and $x\geq 0$,
	\begin{equation}\label{q_1}
	0 \le 1-U_+(t,x)+q(t) \le q_1.
	\end{equation}
	Below we complete the construction of the sub-solution by investigating the sign of $N\underline u(t,x)$ for $x\geq 0$ and $t> 0$. To do so, recalling that $\de>0$ was chosen above for \eqref{f'(1)<0} to hold,  we split our analysis according to the value of $U_-(t,x)$. 
	
\medskip	
	
	\noindent \textbf{First case: $1-\de\le U_-(t,x) \le 1$.} Then, from \eqref{f'(1)<0} and \eqref{q_1},
	\begin{equation}\label{truc}
f\big(U_-(t,x)\big)-f\Big(U_-(t,x)-(1-U_+(t,x)+q(t))\Big)
\leq-\mu(1-U_+(t,x)+q(t)).
\end{equation}
Plugging this into \eqref{sub2}, using $U'>0$, $\xi'(t)>0$ and  \eqref{def-b}, we reach
	\begin{eqnarray*}
		N\underline{u}(t,x) &\le& -\mu(1-U_+(t,x)+q(t))+b(1-U_+(t,x))-q'(t)\nonumber\\
		&=&(b-\mu)(1-U_+(t,x))-\mu q(t)-q'(t)\nonumber\\
		&\le & b(1-U_+(t,x))-\mu q(t)-q'(t)\nonumber\\
		&=& b(1-U(x+ct-\xi(t)))-\mu q(t)-q'(t)\nonumber\\
	&=& b(1-U(x+ct-\xi(t)))-b(1-U(ct-\xi_0-\eta_0))	
	\end{eqnarray*}
 from the definition of $q(t)$ in \eqref{q}. Since $U'>0$, $x\geq 0$ and $-\xi(t)\geq -\xi_0-\eta_0$, we end up with $N\underline u(t,x)\leq 0$.

\medskip
	
	\noindent \textbf{Second case: $0\le U_-(t,x)\le \de$.} Let us recall that 
	$f\in C^1(\R)$ and $f'(0)<0$. Therefore, up to modify $f$ on $(-\infty,0)$ (which is harmless for the problem under consideration since solutions are nonnegative), we may assume that there are $\tilde{\mu}>0$ and $\tilde{\de}>0$ such that
\begin{equation}\label{concave}
f'(u)\le -\tilde{\mu},\quad  \forall u\in(-\infty, \tilde{\de}].
\end{equation}
Also, up to reducing $\mu$ and $\delta$ appearing in \eqref{f'(1)<0} if necessary, we may assume $0<\de\le\tilde{\de}$ and $0<\mu\le\tilde{\mu}$. As a result,
	\begin{equation*}
	f(u)-f(u-s)=\int_{u-s}^u f'(\sigma)d\sigma \leq -\mu s, \quad  \text{ for all } -\infty< u\leq \de  \text{ and } s\ge 0.
	\end{equation*}
	From this we, again, deduce \eqref{truc} and conclude as in the first case.
		
	\medskip

	\noindent \textbf{Third case: $\de\leq U_-(t,x)\leq 1-\de$.} If we denote $C>0$ the Lipschitz constant of $f$ on the interval $[\delta-q_1,1-\delta]$, we deduce from $\de\leq U_-(t,x)\leq 1-\de$ and \eqref{q_1} that
\begin{equation}\label{truc-bis}
f\big(U_-(t,x)\big)-f\Big(U_-(t,x)-(1-U_+(t,x)+q(t))\Big)
\leq C(1-U_+(t,x)+q(t)).
\end{equation}
From \eqref{def-b}, we have $f(U_+(t,x))\leq b(1-U_+(t,x))$. Moreover, in this third case, we have
\begin{equation}\label{middle}
	U'(\zeta_+(t,x))+U'(\zeta_-(t,x))\geq U'(\zeta_-(t,x))\geq \min_{U^{-1}(\delta)\leq z\leq U^{-1}(1-\delta)} U'(z) := \vartheta>0. 
	\end{equation}
Plugging this into \eqref{sub2}, we get
		\begin{eqnarray*}
	N\underline{u}(t,x)&\le& -\vartheta\xi'(t)+(C+b)(1-U_+(t,x))+Cq(t)-q'(t)\nonumber\\
	&=& -\vartheta\eta'(t) +C(1-U_+(t,x))+(C+\mu)q(t)+b\left(U(ct+s_0)-U(\zeta_+(t,x)\right)
	\end{eqnarray*}	
from computing $q'(t)$.	Since $\zeta_+(t,x)=x+ct-\xi(t)\geq ct-\xi_0-\eta_0=ct+s_0$ and $U'>0$, we obtain
	\begin{equation}
	\label{sub3}
	N\underline{u}(t,x)\leq  -\vartheta\eta'(t) +C\left(1-U(x+ct-\xi(t))\right)+(C+\mu)q(t).
	\end{equation}
	We now  select
	\begin{eqnarray}
	\eta(t) &:=& \frac{C}{\vartheta}\int_{0}^{t}(1-U(cs+s_0))ds+\frac{(C+\mu)q_0}{\vartheta}\int_{0}^{t}e^{-\mu s}ds\nonumber\\
	&& +\frac{b(C+\mu)}{\vartheta}\int_{0}^{t}\int_{0}^{s}e^{-\mu(s-\tau)}(1-U(c\tau+s_0))d\tau ds.\nonumber
	\end{eqnarray} 
	Obviously $\eta(0)=0$ and 
	\begin{equation}\label{vartheta-prime}
	\vartheta \eta'(t)=C\left(1-U(ct+s_0)\right)+(C+\mu)q(t)>0,
	\end{equation}
	and thus $\eta(t)\leq \eta(+\infty)$ for all $t\geq 0$. We estimate $\eta(+\infty)$ as follows:
	\begin{eqnarray*}
	\eta(+\infty)&=& \frac{C}{\vartheta}\int_{0}^{+\infty}(1-U(cs+s_0))ds+\frac{(C+\mu)q_0}{\vartheta}\int_{0}^{+\infty}e^{-\mu s}ds\nonumber\\
	&& +\frac{b(C+\mu)}{\vartheta}\int_{0}^{+\infty}\int_{0}^{s}e^{-\mu(s-\tau)}(1-U(c\tau+s_0))d\tau ds\\
	&\le& \frac{C}{c\vartheta}\int_0^{+\infty}(1-U(x))dx+\frac{(C+\mu)q_0}{\vartheta\mu}+\frac{b(C+\mu)}{c\vartheta\mu}\int_0^{+\infty}(1-U(x))dx=:\eta_0,\nonumber
	\end{eqnarray*}
	from the same computation as in \eqref{g(t)}. 
	
	Plugging \eqref{vartheta-prime} into \eqref{sub3}, we reach $
	N\underline{u}(t,x) \leq C\left(U(ct+s_0)-U(x+ct-\xi(t)\right)
$ which is nonpositive as already argued above. 
	
	\medskip

	\noindent \textbf{Conclusion.}  With the above choices, we have therefore verified that $N\underline{u}(t, x)\leq0$ for all $(t, x)\in(0, +\infty)\times\R$.  For $|x|\le L$, one has
	$$ 
	\underline{u}(0, x)=U(x-\xi_0)+U(-x-\xi_0)-1-q_0< 1-q_0<\al=\al\1_{(-L, L)}(x). 
	$$
For $\vert x\vert \geq L$, one has	
	$$
	\underline{u}(0, x)=U(x-\xi_0)+U(-x-\xi_0)-1-q_0\leq U(-L-\xi_0)-q_0<0,
	$$
	if $L=L(\xi_0)>0$ is large enough. As a result, for such a  large $L>0$,
	$$ 
	\underline{u}(0, x)\leq\al\1_{(-L, L)}(x),\quad \forall x\in\R.
	$$  
It follows from the comparison principle that 
$u(t,x)\geq \underline u(t,x)$ for all $t\geq 0$, $x\in \R$. Since $\underline u$ satisfies \eqref{qqch}, so does $u$ and the proof is complete. 
\end{proof}

\appendix

\section{Appendix: the uniformity in \eqref{cramer}}\label{s:appendix}

In this Appendix, we show that the uniformity of the limit \eqref{cramer} is a consequence of an important estimate taken  from \cite{hoglund}. 

\medskip

Let $J:\R\to\R$ be a non trivial and nonnegative function such that
\begin{itemize}
\item[$(i)$] $J\in L^1(\R)$, $J(-x)=J(x)$ a.e. $x\in\R$;

\item[$(ii)$] there exists $\alpha_0>0$ such that $x\mapsto J(x)e^{\alpha_0x}\in L^1(\R)$.
\end{itemize}
In particular,  $\int_\R J(x)e^{\alpha x}dx<+\infty$ for all $\alpha\in [-\alpha_0,\alpha_0]$. For $\alpha\in A:=(-\alpha_0,\alpha_0)$, consider $J_\alpha:\R\to\R$ given by
$$
J_\alpha(x):=\frac{e^{\alpha x} J(x)}{\int_\R e^{\alpha y} J(y)dy}.
$$
Note, that for all $\alpha\in A$, $J_\alpha$ is a probability distribution which admits moments at any orders. Then consider $m:A\to \R$ and $v:A\to \R$ where, for any $\alpha\in A$, $m(\alpha)$ and $v(\alpha)$ denote the mean value and the variance of $J_\alpha$, respectively. Note that these two maps are smooth and that
\begin{equation*}
m'(\alpha)=v(\alpha)>0,\;\forall \alpha\in A.
\end{equation*}
Moreover due to $(i)$, one has $m(0)=0$. Define also the function $a:m(A)\to \R$ by $a=m^{-1}$, so that $0\in m(A)$, $a(0)=0$ and $a$ is continuous and increasing. Using the above notations, \cite[Theorem A $(i)$]{hoglund} reads as follows.

\begin{theorem}\label{THEO-app}
Set for $n\geq 0$ and $L>0$,
$$
R_n(L):=\int _{|x|\geq L} J^{*(n)}(x)dx.
$$
Then, under the above assumptions, for any compact set $K$ of $A\cap [0,\infty)$ one has
\begin{equation*}
R_n(nx)=2e^{-n\Lambda^*(x)}\tau\left(nv(a(x))a(x)^2\right)\left(1+o(1)\right)\text{ as $n\to+\infty$},
\end{equation*}
uniformly for $x\in \R$ such that $a(x)\in K$. Herein $\tau:[0,\infty)\to\R$ is the function given by
$$
\tau(\lambda)=(2\pi)^{-1/2}e^{\lambda/2}\int_{\sqrt \lambda}^{+\infty}e^{-y^2/2}dy,
$$
while $\Lambda^*$ denotes the Fenchel-Legendre transform of the logarithmic moment generating function of $J$, as defined in Lemma \ref{lem:expo}.
\end{theorem}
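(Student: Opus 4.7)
The statement is the Bahadur--Rao sharp large-deviation asymptotic specialized to symmetric kernels, and since the paper explicitly quotes it from \cite{hoglund}, my plan is to outline the Esscher-tilt argument and indicate where uniformity enters, rather than to reproduce the full Edgeworth analysis.

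\emph{Reduction via Esscher tilt.} Since $J$ and hence every $J^{\ast(n)}$ is even, I would begin with $R_n(nx) = 2\int_{nx}^{+\infty} J^{\ast(n)}(y)\,dy$. A direct computation shows that the Esscher transform commutes with convolution, giving
$$J^{\ast(n)}(y) \,=\, e^{n\Lambda(\alpha) - \alpha y}\,J_\alpha^{\ast(n)}(y), \quad \alpha \in A, \;n \geq 1.$$
I would then specialize to the optimal tilt $\alpha = a(x)$, so that $m(\alpha) = x$ and $\alpha x - \Lambda(\alpha) = \Lambda^*(x)$, and change variable $y = nx + \sqrt{nv(\alpha)}\,u$. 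Denoting by $\phi_n^{\alpha}$ the density of the standardized tilted sum $(S_n - nx)/\sqrt{nv(\alpha)}$ with $S_n$ drawn from $J_\alpha^{\ast(n)}$, this yields the clean expression
$$R_n(nx) \,=\, 2\,e^{-n\Lambda^*(x)}\int_0^{+\infty} e^{-\alpha\sqrt{nv(\alpha)}\,u}\,\phi_n^{\alpha}(u)\,du.$$

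\emph{Local CLT and identification of $\tau$.} The local central limit theorem for the tilted sum provides
$$\phi_n^{\alpha}(u) \,=\, \frac{1}{\sqrt{2\pi}}\,e^{-u^2/2}\bigl(1 + o(1)\bigr)$$
uniformly in $u \in \R$ as $n \to \infty$. Substituting and completing the square via $-\alpha\sqrt{nv}\,u - u^2/2 = -\tfrac12(u+\alpha\sqrt{nv})^2 + nv\alpha^2/2$, the remaining integral becomes
$$\frac{e^{nv(\alpha)\alpha^2/2}}{\sqrt{2\pi}}\int_{\alpha\sqrt{nv(\alpha)}}^{+\infty} e^{-y^2/2}\,dy \,=\, \tau\!\bigl(nv(a(x))\,a(x)^2\bigr),$$
which matches the form announced in the theorem.

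\emph{Uniformity and the main obstacle.} The crux is to make the local CLT approximation uniform with respect to $\alpha$ in the compact $a(K) \subset A$. Compactness of $K$ inside the open interval of exponential moments yields uniform moment bounds on the family $\{J_\alpha\}_{\alpha \in a(K)}$, a uniform positive lower bound on $v(\alpha)$, and uniform decay of the characteristic function of $J_\alpha$ away from the origin; these three ingredients allow the standard Fourier-inversion proof of the local CLT to run with error bounds that do not depend on $\alpha \in a(K)$. The delicate point is that the exponential weight $e^{-\alpha\sqrt{nv}\,u}$ concentrates the integrand near $u = 0$ at a scale where only an $L^\infty$-type local CLT would be sufficient: one truly needs pointwise control of $\phi_n^\alpha - (2\pi)^{-1/2}e^{-u^2/2}$ that is $o(1)$ uniformly in $\alpha \in a(K)$, not merely the weak-convergence form. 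This quantitative uniform local CLT is exactly the content of the estimate established in \cite{hoglund}, and I would invoke that result directly to conclude.
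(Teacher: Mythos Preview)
The paper does not prove Theorem~\ref{THEO-app} at all: it is stated verbatim as a quotation of \cite[Theorem A $(i)$]{hoglund} and then used as a black box to derive the uniformity in \eqref{cramer}. So there is no proof in the paper to compare your proposal against.

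That said, your sketch is the correct classical Bahadur--Rao argument (Esscher tilt to the optimal $\alpha=a(x)$, change of variable, local CLT for the standardized tilted sum, identification of $\tau$), and the computation you give leading to $\tau(nv(a(x))a(x)^2)$ is accurate. Your identification of the main obstacle is also on target: the whole substance of the result lies in the \emph{uniform} local CLT for the family $\{J_\alpha\}_{\alpha\in a(K)}$, and you rightly note that mere weak convergence is insufficient because the exponential weight localizes the integral. One small point to be careful about: you write $\phi_n^\alpha$ as a density and invoke an $L^\infty$ local CLT; this requires knowing that $J_\alpha^{*(n)}$ is eventually bounded (equivalently, that $\widehat{J_\alpha}$ lies in some $L^p$), which is an additional regularity hypothesis not stated explicitly here. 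In any case, since your argument ultimately appeals to \cite{hoglund} for precisely this uniform estimate, you end up invoking the same source the paper does --- you have simply unpacked \emph{why} that reference is needed, whereas the paper takes the entire asymptotic formula on faith.
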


 Equipped with this we can conclude on the uniformity of the limit \eqref{cramer} as stated in Lemma \ref{lem:expo}.
 
 \medskip

\begin{proof} Since $a$ is continuous, increasing and $a(0)=0$, we can fix $L_1>0$ such that
$$
0<L_1<\lim_{\alpha\to \alpha_0}m(\alpha),\; \text{ and } \; a(L_1)<\alpha_0.
$$
Now select $L_0>0$ such that $0<L_0<L_1$.
With such a choice, $ a\left([L_0,L_1]\right)$ is a compact subset of $A\cap [0,\infty)$. Hence from Theorem \ref{THEO-app} we obtain, uniformly for $x\in [L_0,L_1]$,
\begin{equation*}
\frac{1}{n}\ln R_n(nx)=-\Lambda^*(x)+\frac{1}{n} \ln \tau\left(nv(a(x))a(x)^2\right)+O\left(\frac{1}{n}\right), \quad \text{ as $n\to+\infty$}.
\end{equation*}
Note note that $0<a(L_0)\leq a(x)\leq a(L_1)<\alpha_0$ for all $x\in [L_0,L_1]$, so that there exists $\eta\in (0,1)$ such that
$$
\eta \leq v(a(x))a(x)^2\leq \eta^{-1},\;\forall x\in [L_0,L_1].
$$
Hence, since one has
$$
\tau(\lambda)\sim (2\pi\lambda)^{-1/2},\quad  \text{ as } \lambda\to +\infty,
$$
this yields
$$
\frac{1}{n} \ln \tau\left(nv(a(x))a(x)^2\right)=O\left(\frac{\ln n}{n}\right),
$$
as $n\to+\infty$ and uniformly with respect to $x\in [L_0,L_1]$.
This proves the uniformity in \eqref{cramer}.
 \end{proof}

\bigskip

 \noindent{\bf Acknowledgements.}  M. Alfaro is supported by the ANR project DEEV ANR-20-CE40-0011-01. H. Kang would like to acknowledge the region Normandie for the financial support of his postdoc.

\bibliography{hulk}
\bibliographystyle{plain}

\end{document}